\documentclass[sn-mathphys,Numbered]{sn-jnl}
\usepackage{amsmath,amsthm,amsfonts,mathrsfs}
\usepackage{amssymb,braket,subfig}
\usepackage{tikz-cd}
\usepackage{xspace}
\usepackage{graphicx,booktabs}
\usepackage{paralist}
\usepackage{lmodern}
\usepackage{hyperref}
\hypersetup{%
    pdfmenubar=true,       
    pdffitwindow=true,     
    pdfstartview={FitW},    
    pdftitle={NKM},
    colorlinks=true,       
    linkcolor=red,          
    citecolor=red,        
    filecolor=magenta,      
    urlcolor=cyan,           
}
\usepackage{acro}
\usepackage{cleveref}
\usepackage{algorithm,algorithmic}
\usepackage{multirow}%
\usepackage[title]{appendix}%
\usepackage{xcolor}%
\usepackage{textcomp}%
\usepackage{manyfoot}%
\usepackage{listings}%
\makeatletter
\def\BState{\State\hskip-\ALG@thistlm}
\makeatother
\makeatletter
\def\BState{\State\hskip-\ALG@thistlm}
\makeatother
\Crefname{equation}{}{}
\usepackage{todonotes}
\usepackage{inputenc}
\usepackage{dsfont}

\newcommand{\bx}{\boldsymbol{x}}

\newcommand{\bz}{\boldsymbol{z}}
\newcommand{\bds}{\boldsymbol{s}}
\newcommand{\bda}{\boldsymbol{a}}
\newcommand{\bdf}{\boldsymbol{f}}
\newcommand{\bdg}{\boldsymbol{g}}
\newcommand{\bde}{\boldsymbol{e}}
\newcommand{\bdu}{\boldsymbol{u}}
\newcommand{\bdv}{\boldsymbol{v}}
\newcommand{\bdw}{\boldsymbol{w}}
\newcommand{\bzero}{\boldsymbol{0}}

\newcommand{\bdK}{\boldsymbol{K}}
\newcommand{\bF}{\boldsymbol{F}}
\newcommand{\bH}{\boldsymbol{H}}
\newcommand{\bzeta}{\boldsymbol{\zeta}}

\DeclareMathOperator*{\argmax}{arg\,max}
\DeclareMathOperator*{\diag}{\texttt{diag}}
\newcommand{\tra}{\boldsymbol{\mathsf{T}}}
\newcommand{\Real}{\mathbb{R}}
\theoremstyle{plain}
 \newtheorem{theorem}{Theorem}[section]
 \newtheorem{proposition}{Proposition}[section]
 \newtheorem{lemma}{Lemma}[section]
 \newtheorem{corollary}{Corollary}[section]
 
 \newtheorem{assumption}{Assumption}
 
\theoremstyle{definition}
 \newtheorem{definition}{Definition}

 \newtheorem{strategy}{Strategy}
 
\theoremstyle{remark}
  \newtheorem{remark}{Remark}[section]
\setcounter{assumption}{0}


\newcommand{\ie}{\textrm{i.e.}}

\begin{document}

\title[Convergence Analysis of Nonlinear Kaczmarz Method]{Convergence Analysis of Nonlinear Kaczmarz Method for Systems of Nonlinear Equations with Component-wise Convex Mapping}
\author[1,2]{\fnm{Yu} \sur{Gao}}
\author[1,2]{\fnm{Chong} \sur{Chen}}

\affil[1]{\orgdiv{LSEC, ICMSEC}, \orgname{Academy of Mathematics and Systems Science, Chinese Academy of Sciences}, \orgaddress{ \city{Beijing}, \postcode{100190}, \country{China}}}
\affil[2]{\orgname{University of Chinese Academy of Sciences}, \orgaddress{\city{Beijing}, \postcode{100190}, \country{China}}}


\abstract{
Motivated by a class of nonlinear imaging inverse problems, for instance, multispectral computed tomography (MSCT), this paper studies the convergence theory of the nonlinear Kaczmarz method (NKM) for solving the system of nonlinear equations with component-wise convex mapping, namely, the function corresponding to each equation being convex. However, such kind of nonlinear mapping may not satisfy the commonly used component-wise \textit{tangential cone condition} (TCC). For this purpose, we propose a novel condition named {\it relative gradient discrepancy condition} (RGDC), and make use of it to prove the convergence and even the convergence rate of the NKM with several general index selection strategies, where these strategies include cyclic strategy and maximum residual strategy. Particularly, we investigate the application of the NKM for solving nonlinear systems in MSCT image reconstruction. We prove that the nonlinear mapping in this context fulfills the proposed RGDC rather than the component-wise TCC, and provide a global convergence of the NKM based on the previously obtained results. Numerical experiments further illustrate the numerical convergence of the NKM for MSCT image reconstruction.
}

\keywords{Convergence analysis, nonlinear Kaczmarz method, systems of nonlinear equations, component-wise convex mapping, relative gradient discrepancy condition, image reconstruction for multispectral computed tomography}

\maketitle

\section{Introduction} \label{sec:Introduction}
It is well-known that the Kaczmarz method and its variants have been widely used to solve large-scale systems of linear equations \cite{Kaczmarz1937,GORDON1970471,Tanabe1971,Strohmer2007ARK,Needell2010,liu16accelerated,Popa17_ConverRate,Greedy_kac_bai17,lu2022SGD}. It can gradually approximate the solution by alternating orthogonal projection onto the hyperplane composed of each linear equation. However, in practice, we often need to solve the  large-scale systems of nonlinear equations as below
\begin{align}\label{eq:nonlinear_system}
    \bF(\bx) = \bzero \Longleftrightarrow
    F_j(\bx) = 0, \quad \text{for}\ j = 1, \ldots, J. 
\end{align} 
Here $\bF(\bx) = [F_1(\bx),\ldots,F_J(\bx)]^{\tra} \in \Real^J$, $\bx \in \mathcal{X}\subseteq \Real^N$, $\bzero$ denotes the zero vector in $\Real^J$, and $F_j :\mathcal{X}\subseteq \Real^N \rightarrow \Real$ is a nonlinear real-valued function defined on the domain $\mathcal{X}$. 

Finding or approximating a solution to \cref{eq:nonlinear_system} is a significant issue in many research fields, including nonlinear optimization, numerical methods for nonlinear differential equations, and nonlinear inverse problems \cite{Iter_NonEq_book,Scherzer08}. An important extension of the Kaczmarz method, also known as the nonlinear Kaczmarz method (NKM), is first proposed in \cite{McCormick1975AnIP} for solving nonlinear system \cref{eq:nonlinear_system} as the following 
\begin{align}\label{eq:non_kacz}
    \bx^{k+1} = \bx^{k} - \frac{F_{j_k}(\bx^{k})}{\|\nabla F_{j_k}(\bx^{k})\|^2} \nabla F_{j_k}(\bx^{k}) \quad \text{for}~ k = 1, 2, \ldots, 
\end{align}
where index $j_k \in \{1, \ldots, J\}$ is selected by using a certain specified strategy. 

The equation corresponding to index $j_k$ in \cref{eq:nonlinear_system} can be regarded as the hypersurface 
\[
\left\{\bx \mid F_{j_k}(\bx) = 0\right\}.
\]
At point $\bx^{k}$, this hypersurface can be approximated by the hyperplane as  
\begin{align*}
    \left\{\bx \mid F_{j_k}(\bx^{k}) + \nabla F_{j_k}(\bx^{k})^{\tra}(\bx-\bx^{k}) = 0 \right\}.
\end{align*}
As a matter of fact, the iterative scheme of the NKM is generated by orthogonally projecting the current point $\bx^{k}$ onto the above hyperplane \cite{McCormick1975AnIP,non_kac_Mcco_77,NLKacz_22,Zhang2023OnSK,MR_NKM23,Lorenz23NBK}. 

In this work, we mainly study the convergence theory of the NKM for solving \cref{eq:nonlinear_system} with component-wise convex mapping. Note that the current article is concerned with the case when $J \ge N$ and \cref{eq:nonlinear_system} is solvable, and some solution is denoted as $\bx^*$.  

Several works have studied the numerical and/or theoretical aspects of the NKM. McCormick demonstrated the local convergence of the NKM within a sufficiently small neighborhood of the solution \cite{non_kac_Mcco_77}. The authors in \cite{NLKacz_22} proposed and analyzed a random strategy to the NKM, which selects the index by weighting its probability proportionally to the residual at the current iteration point. A linear convergence rate of the NKM with the maximum residual strategy is presented in \cite{MR_NKM23}. The authors in \cite{Zhang2023OnSK} employed and analyzed the sampling Kaczmarz--Motzkin strategy to the NKM for solving the nonlinear system, which is an extension of the linear case in \cite{SampKacMotz17}. In \cite{Lorenz23NBK}, the authors extended the idea of combining the Bregman projection with the Kaczmarz method for solving the linear systems in \cite{Lorenz14Bregman,Lorenz16RandSK} to the nonlinear case. The greedy random strategy proposed in \cite{Greedy_kac_bai17} for the linear systems was also applied to the nonlinear case in \cite{Zhang2022GreedyCN}. The authors in \cite{zhaozz14} proposed an extended algebraic reconstruction technique to solve the corresponding nonlinear system in image reconstruction of dual-energy CT, which is actually an application of the NKM. Furthermore, the NKM can be seen as a special case of the following Landweber--Kaczmarz algorithm 
\begin{align}\label{eq:Land_kacz}
    \bx^{k+1} = \bx^{k} - \omega_k F_{j_k}(\bx^{k}) \nabla F_{j_k}(\bx^{k}), 
\end{align}
where $\omega_k$ is referred to as the relaxation parameter at $\bx^{k}$.
The above algorithm with a cyclic strategy was first introduced and analyzed in \cite{Land_Kacz_Scherzer}. The authors in \cite{Martin06NewtonKac} combined \cref{eq:Land_kacz} with the iteratively regularized Gauss--Newton method, and then analyzed its convergence and regularization behavior using their proposed nonlinearity condition. A convergence analysis was presented in \cite{Averkac_Li18} for an averaged Kaczmarz iteration which can be seen as a hybrid method combining the Landweber iteration and the Kaczmarz method. A convergence analysis was given in \cite{sgd_for_ill_posed} for a stochastic gradient descent method which can be viewed as  \cref{eq:Land_kacz} with random strategy. The authors in \cite{wei15phasekac,Vershynin18phasekac,HM22KacPhase} applied the Kaczmarz method or its randomized version to solve the phase retrieval problem. 

The aforementioned studies are almost dedicated to understanding the theory or improving the performance of the NKM. However, most of the existing convergence results rely on the local \textit{tangential cone condition} (TCC), which is recalled in \cref{def:local_tcc}. The authors in \cite{hanke1995convergence} proposed the local TCC to analyze the convergence of the Landweber iteration for the nonlinear ill-posed problems. The authors in \cite{Land_Kacz_Scherzer} further analyzed the Landweber--Kaczmarz iteration in \cref{eq:Land_kacz} using a component-wise local TCC as described in \cref{def:local_tcc}. Additionally, the local TCC has also been employed in the analysis of \cite{sgd_for_ill_posed}. The component-wise local TCC was applied in \cite{Averkac_Li18, NLKacz_22,MR_NKM23,Zhang2023OnSK,Zhang2022GreedyCN,Lorenz23NBK}. 

Although the component-wise local TCC is commonly used to prove the convergence of the NKM, it may be hard to verify these conditions for some nonlinear mappings.
Moreover, the component-wise convex mapping, namely, each component function being convex (see \cref{def:convex_mapping}), may not satisfy the component-wise local TCC. Note that this type of examples is quite common in reality, such as the inverse problems (image reconstruction) in multispectral computed tomography (MSCT), discrete X-ray transform with nonlinear partial volume effect, phase retrieval, to just name a few \cite{gao2021EPD}. The authors in \cite{Zhang2023OnSK,Lorenz23NBK} also considered the case that the component functions of \cref{eq:nonlinear_system} are convex (or star-convex) but always nonnegative. However, as far as we know, there is almost no literature specifically analyzing the NKM for solving the general nonlinear system with component-wise convex mapping.
In this work, we focus on the convergence theory of the NKM for this case, and provide a new condition, named {\it relative gradient discrepancy condition} (RGDC), to perform the analysis. 
In particular, we will further apply our theoretical results to analyze the image reconstruction problem in MSCT.

This paper is organized as follows. The required preliminaries are presented in \cref{sec:Preliminaries}. In \cref{sec:conver_analy}, we begin with analyzing the limitations of component-wise local TCC for the case with component-wise convex mapping. For such case, we then introduce the RGDC and establish the convergence of the NKM. Section \ref{sec:NKM_conver_MSCT} analyzes the NKM for solving the specifically nonlinear systems in MSCT image reconstruction. We further show the numerical convergence of the NKM for that specific problem in \cref{sec:Numerical}. Finally, we conclude the paper in \cref{sec:Discussion}. 

\section{Mathematical preliminaries}
\label{sec:Preliminaries}

Here we give some required notation and preliminary results. 

\subsection{Notation}

Let $\Real^N$ be a finite-dimensional Euclidean space that equips with an inner product $\langle \bx_1, \bx_2 \rangle = \bx_1^{\tra}\bx_2$ and the norm $\|\cdot\| = \sqrt{\langle \cdot,\cdot\rangle}$. The $l_{\infty}$ norm is defined as $\|\bx\|_{\infty} := \max\limits_{i\in \{1, \ldots, N\}} |x_i|$.
For a matrix, $A= (a_{ji})\in\Real^{M\times N}$, the matrix norms are defined as follows
\begin{align*}
    \|A\| := \max\limits_{\|\bx\|=1} \|A \bx\|,\quad \|A\|_{\infty} := \max\limits_{\|\bx\|_{\infty}=1} \|A \bx\|_{\infty},\quad \|A\|_F := \sqrt{ \sum_{j=1}^M\sum_{i=1}^{N} a_{ji}^2}.
\end{align*}
Let $\sigma_{\min}(A)$, $Null(A)$, $Range(A)$ and $A^{\dagger}$ be the minimum nonzero singular value, null (kernel) space, range, and Moore--Penrose pseudoinverse of $A$, respectively. If $A$ has full column rank, define the scaled condition number as 
\[
\kappa_F(A):=\|A\|_F \|A^{\dagger}\|=\frac{\|A\|_F}{\sigma_{\min}(A)}.
\] 
We denote $\diag(\bx)$ the diagonal matrix with diagonal elements as $\bx$, $\#\alpha$ the number of elements in set $\alpha$.
Let $A[\alpha,\beta]$ denote the submatrix of $A$ lying in rows $\alpha$ and columns $\beta$. 
For the square matrix $A$, denote $\lambda(A),\det(A)$ and $\textrm{tr}(A)$ as the set of eigenvalues, determinant and trace of $A$, respectively. 

If the $\bF(\bx)$ defined in \cref{eq:nonlinear_system} is continuously differentiable, we refer to its Jacobian matrix as $\bF^{\prime}(\bx) := [\nabla F_1(\bx),\ldots, \nabla F_J(\bx)]^{\tra}\in\Real^{J\times N}$.
We indicate $\mathcal{B}_{\rho}(\bx)$ the open ball with radius $\rho$ around a given point $\bx$. 
Define the level set of $F_j$ at value $F_j(\bar{\bx})$ as 
\begin{equation*}
    \textbf{Lev}_j(\bar{\bx}) := \left\{ \bx \mid F_j(\bx) = F_j(\bar{\bx})\right\}.
\end{equation*}
Note that when $F_j$ is convex, its sublevel set $\left\{ \bx \mid F_j(\bx) \le t,  t\in\Real\right\}$ is convex. Moreover, the tangent plane of $F_j$ at $\bar{\bx}$ is given by 
\begin{equation*}
 \textbf{T}_j(\bar{\bx}):=\left\{\bx \mid \nabla F_j(\bar{\bx})^{\tra} (\bx-\bar{\bx}) = 0 \right\}.
\end{equation*}

\subsection{Preliminary results}

The local \textit{tangential cone condition} (TCC) in $\Real^N$ is given as follows. 
\begin{definition}(\cite{Scherzer08})\label{def:local_tcc}
Let $\bF$ be a continuously differentiable mapping. The $\bF$ satisfies local TCC in $\Omega\subseteq\Real^N$
    if there exists $\eta < 1/2$ such that
\begin{equation}\label{eq:local_tcc_mapping}
    \left\|\bF(\bx_1)-  \bF(\bx_2)- \bF^{\prime}(\bx_1)(\bx_1-\bx_2)\right\| 
    \leq \eta \left\|\bF(\bx_1)-\bF(\bx_2)\right\|, 
\end{equation}
$\forall \bx_{1}, \bx_{2} \in \Omega$. 
Moreover, the $\bF$ satisfies the component-wise local TCC in $\Omega\subseteq\Real^N$ if for any component $F_j$, there exists $\eta_j < 1/2$ such that 
\begin{equation}\label{eq:local_tcc_component}
\left|F_j(\bx_1)-  F_j\left(\bx_2\right)- \nabla F_j (\bx_1)^{\tra}(\bx_1-\bx_2)\right| \leq \eta_j \left|F_j(\bx_1)-F_j(\bx_2)\right|, 
\end{equation}
$\forall \bx_{1}, \bx_{2} \in \Omega$. 
\end{definition}

It is easy to show that the component-wise local TCC is stronger than the local TCC. In addition, the authors in \cite{Scherzer08} presented the result below for the local TCC within ball $\mathcal{B}_{\rho}(\bx_0)$.
\begin{proposition}(\cite{Scherzer08})\label{prop:local_tcc_property}
    Suppose that $\bF$ satisfies local TCC \cref{eq:local_tcc_mapping} in $\mathcal{B}_{\rho}(\bx_0)$.
    Then for any $\bar{\bx}\in\mathcal{B}_{\rho}(\bx_0)$, 
    \begin{equation*}
        \{\bx\mid \bF(\bx)=\bF(\bar{\bx}) \} \cap \mathcal{B}_{\rho}(\bx_0)  = \bar{\bx} + Null(\bF^{\prime}(\bar{\bx}))\cap\mathcal{B}_{\rho}(\bx_0),
    \end{equation*}
    and $Null(\bF^{\prime}(\bx))= Null(\bF^{\prime}(\bar{\bx}))$ for $\bx\in \{\bx\mid \bF(\bx)=\bF(\bar{\bx}) \} \cap \mathcal{B}_{\rho}(\bx_0) $.
\end{proposition}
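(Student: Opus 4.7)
My plan is to extract both the set equality and the null-space equality directly from the TCC \cref{eq:local_tcc_mapping} by choosing $(\bx_1,\bx_2)$ cleverly and exploiting that the constant $\eta<1/2<1$ forces certain nonnegative quantities to vanish. I would first establish the set equality by double inclusion, and then bootstrap the null-space claim from it.

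\textbf{Set equality.} For the $\supseteq$ inclusion I would take $\bx=\bar{\bx}+h\in\mathcal{B}_{\rho}(\bx_0)$ with $h\in Null(\bF^{\prime}(\bar{\bx}))$ and apply \cref{eq:local_tcc_mapping} with $\bx_1=\bar{\bx}$, $\bx_2=\bx$. Since $\bF^{\prime}(\bar{\bx})(\bar{\bx}-\bx)=-\bF^{\prime}(\bar{\bx})h=\bzero$, the left-hand side collapses to $\|\bF(\bar{\bx})-\bF(\bx)\|$, giving $\|\bF(\bar{\bx})-\bF(\bx)\|\le\eta\|\bF(\bar{\bx})-\bF(\bx)\|$; because $\eta<1$, this forces $\bF(\bx)=\bF(\bar{\bx})$. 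For $\subseteq$ I would take $\bx\in\mathcal{B}_{\rho}(\bx_0)$ with $\bF(\bx)=\bF(\bar{\bx})$ and again apply \cref{eq:local_tcc_mapping} with $\bx_1=\bar{\bx}$, $\bx_2=\bx$; now the right-hand side is $0$, which yields $\bF^{\prime}(\bar{\bx})(\bx-\bar{\bx})=\bzero$, that is, $\bx-\bar{\bx}\in Null(\bF^{\prime}(\bar{\bx}))$.

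\textbf{Null-space equality.} Fix $\bx\in\mathcal{B}_{\rho}(\bx_0)$ with $\bF(\bx)=\bF(\bar{\bx})$. Swapping the roles of $\bar{\bx}$ and $\bx$ in the $\subseteq$ argument already shows $\bar{\bx}-\bx\in Null(\bF^{\prime}(\bx))$. To upgrade this single-vector fact into a subspace equality, I would take an arbitrary $h\in Null(\bF^{\prime}(\bar{\bx}))$ and rescale it: since $\bar{\bx}$ lies in the interior of $\mathcal{B}_{\rho}(\bx_0)$, there exists $t>0$ small enough that $\bar{\bx}+th\in\mathcal{B}_{\rho}(\bx_0)$. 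The $\supseteq$ direction then gives $\bF(\bar{\bx}+th)=\bF(\bar{\bx})=\bF(\bx)$, so the $\subseteq$ direction applied now with $\bx$ playing the role of the base point yields $(\bar{\bx}+th)-\bx\in Null(\bF^{\prime}(\bx))$. Subtracting $\bar{\bx}-\bx\in Null(\bF^{\prime}(\bx))$ and dividing by $t$ gives $h\in Null(\bF^{\prime}(\bx))$. The reverse inclusion is obtained by interchanging $\bar{\bx}$ and $\bx$.

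\textbf{Main obstacle.} The set equality is a one-line consequence of the TCC, obtained by making either the residual or the Jacobian term vanish. The delicate step is the upgrade from a single-vector containment to the full subspace equality of null spaces: it requires invoking the set equality \emph{twice}, once centered at $\bar{\bx}$ and once at $\bx$, together with the scaling argument that keeps $\bar{\bx}+th$ inside $\mathcal{B}_{\rho}(\bx_0)$. Apart from this bookkeeping, no further nonlinear analysis is needed; everything reduces to elementary manipulations using the strict inequality $\eta<1/2$.
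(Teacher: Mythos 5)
Your proof is correct. Note that the paper itself gives no proof of this proposition --- it is quoted verbatim from the cited reference \cite{Scherzer08} --- and your argument is precisely the standard one from that source: the two inclusions follow by making either the Jacobian term or the residual term in \cref{eq:local_tcc_mapping} vanish and using $\eta<1$, and the null-space equality follows by the rescaling trick $\bar{\bx}+th$ combined with applying the set equality centered at both $\bar{\bx}$ and $\bx$. The only point worth flagging is notational: you (correctly) read the right-hand side as $\bigl(\bar{\bx}+Null(\bF^{\prime}(\bar{\bx}))\bigr)\cap\mathcal{B}_{\rho}(\bx_0)$, which is the intended meaning even though the displayed formula is ambiguous about operator precedence.
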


Moreover, we introduce the following definition.

\begin{definition}(\cite{Iter_NonEq_book})\label{def:convex_mapping}
    Mapping $\bF(\bx)$ is component-wise convex in a convex set $\Omega$ if 
    \begin{equation*}
        \bF(\lambda \bx_1 + (1-\lambda)\bx_2)\le \lambda \bF(\bx_1) + (1-\lambda) \bF(\bx_2), 
    \end{equation*}
$\forall \bx_{1}, \bx_{2} \in \Omega$ and $\forall\lambda\in (0,1)$,
    where $\le$ denotes the component-wise partial ordering. The $\bF(\bx)$ is strictly component-wise convex in $\Omega$ if the above inequality strictly holds whenever $\bx_1\neq\bx_2$.
\end{definition} 

\section{Convergence analysis}\label{sec:conver_analy}

In this section, we shall analyze the convergence of the NKM under some new proposed condition rather than the often used TCC. To begin with, we present the general NKM in \cref{alg:nonlinear_kacz}.
\begin{algorithm}[htbp]
    \caption{The general NKM.}
    \label{alg:nonlinear_kacz}
    \begin{algorithmic}[1]
   \STATE \emph{Initialization}: Given mapping $\bF$, and the initial point $\bx^0$. Let $k \gets 0$.  
    \STATE \emph{Loop}: 
    \STATE \quad  Select an index $j_k\in\{1,\ldots,J\}$ according to some strategy.
    \STATE  \quad Update $\bx^{k+1}$ by \cref{eq:non_kacz}. 
    \STATE \quad \textbf{If} some given termination condition is satisfied, \textbf{output} $\bx^{k+1}$; 
    \STATE \quad \textbf{otherwise}, let $k \gets k+1$, \textbf{goto} \emph{Loop}.
    \end{algorithmic}
\end{algorithm}

\subsection{A brief analysis for the component-wise local TCC}
\label{subsec:analy_tcc}
In this subsection, we analyze the component-wise local TCC in $\Real^N$, and have the following result. 
\begin{proposition}\label{thm:level_set_tangent_plane}
    Suppose that $\bF$ satisfies component-wise local TCC \cref{eq:local_tcc_component} in $\mathcal{B}_{\rho}(\bx_0)$.
    Then for all $\bar{\bx}\in\mathcal{B}_{\rho}(\bx_0)$, the component function $F_j$ satisfies
    \begin{equation*}
        \normalfont \textbf{Lev}_j(\bar{\bx})\cap\mathcal{B}_{\rho}(\bx_0) = \textbf{T}_j(\bar{\bx})\cap\mathcal{B}_{\rho}(\bx_0),
    \end{equation*}
    and $Null(\nabla F_j(\bx))=Null(\nabla F_j(\bar{\bx}))$ for $\normalfont\bx\in\textbf{Lev}_j(F_j(\bar{\bx}))\cap\mathcal{B}_{\rho}(\bx_0)$.
\end{proposition}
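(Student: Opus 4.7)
The plan is to apply \cref{prop:local_tcc_property} directly, one component at a time. Each scalar component $F_j$ can itself be viewed as a mapping $F_j:\mathcal{B}_{\rho}(\bx_0)\to\Real$ with Jacobian $F_j^{\prime}(\bx)=\nabla F_j(\bx)^{\tra}$. Under this identification, the component-wise local TCC \cref{eq:local_tcc_component} is exactly the local TCC \cref{eq:local_tcc_mapping} applied to the scalar mapping $F_j$, since the norm on $\Real$ reduces to absolute value. Hence \cref{prop:local_tcc_property} applies to $F_j$ and yields, for any $\bar{\bx}\in\mathcal{B}_{\rho}(\bx_0)$,
$$\{\bx\mid F_j(\bx)=F_j(\bar{\bx})\}\cap\mathcal{B}_{\rho}(\bx_0)=\bigl(\bar{\bx}+Null(\nabla F_j(\bar{\bx})^{\tra})\bigr)\cap\mathcal{B}_{\rho}(\bx_0),$$
together with $Null(\nabla F_j(\bx)^{\tra})=Null(\nabla F_j(\bar{\bx})^{\tra})$ for every $\bx$ in this intersection.

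To finish, I would verify the set-theoretic identification $\bar{\bx}+Null(\nabla F_j(\bar{\bx})^{\tra})=\textbf{T}_j(\bar{\bx})$, which is immediate from the definitions: a point $\bx$ lies in the left-hand side iff $\bx-\bar{\bx}\in Null(\nabla F_j(\bar{\bx})^{\tra})$, iff $\nabla F_j(\bar{\bx})^{\tra}(\bx-\bar{\bx})=0$, iff $\bx\in\textbf{T}_j(\bar{\bx})$. Substituting this identification into the displayed equality gives the first claim. The equality $Null(\nabla F_j(\bx))=Null(\nabla F_j(\bar{\bx}))$ in the second claim is the translation of the second conclusion of \cref{prop:local_tcc_property} under the same identification (interpreting $Null(\nabla F_j(\bx))$ as the orthogonal complement of the span of the gradient, which is the natural reading when $\nabla F_j(\bx)$ is viewed as a $1\times N$ linear map).

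I expect no genuine obstacle: the proposition is essentially a one-observation corollary of \cref{prop:local_tcc_property}, the observation being that a component-wise local TCC on $\bF$ automatically grants each scalar component its own local TCC. The only mildly delicate point worth flagging is the degenerate case $\nabla F_j(\bar{\bx})=\bzero$, where both $\textbf{T}_j(\bar{\bx})$ and $Null(\nabla F_j(\bar{\bx})^{\tra})$ collapse to all of $\Real^N$; in that situation \cref{eq:local_tcc_component} forces $F_j(\bx_1)=F_j(\bar{\bx})$ for every $\bx_1$ in the same level component of $\bar{\bx}$ inside the ball, so the claimed equalities still hold trivially.
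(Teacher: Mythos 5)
Your proposal is correct and follows exactly the route the paper takes: the paper's own proof is the one-line remark that the result follows directly from \cref{prop:local_tcc_property}, and your argument simply spells out the component-wise application of that proposition together with the identification of $\bar{\bx}+Null(\nabla F_j(\bar{\bx})^{\tra})$ with $\textbf{T}_j(\bar{\bx})$. The extra care you take with the degenerate case $\nabla F_j(\bar{\bx})=\bzero$ is a reasonable addition but does not change the substance.
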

\begin{proof}
    The result can be directly obtained by \cref{prop:local_tcc_property}.
\end{proof}

If there exists $\bx^*\in\mathcal{B}_{\rho}(\bx_0)$ such that $\bF(\bx^*) = \bzero$, then using \cref{thm:level_set_tangent_plane}, we have
\begin{equation*}
    \textbf{Lev}_j(\bx^*)\cap\mathcal{B}_{\rho}(\bx_0) = \textbf{T}_j(\bx^*)\cap\mathcal{B}_{\rho}(\bx_0).
\end{equation*}
In other words, it shows that the nonlinear system in \cref{eq:nonlinear_system} degenerates to the following linear system
\begin{equation*}
    \bF^{\prime}(\bx^*)(\bx-\bx^*) = \bzero
\end{equation*}
in $\mathcal{B}_{\rho}(\bx_0)$. Thus implementing the nonlinear Kaczmarz iteration \cref{eq:non_kacz} from any initial point in $\mathcal{B}_{\rho}(\bx_0)$ is equivalent to perform the classical Kaczmarz method to solve the linear system above.
Hence, the convergence results of the Kaczmarz method are still valid to the NKM for this case.  

\begin{corollary}\label{thm:nonzero_curvature_tcc}
   Assume that there exists an $F_j$ such that the mean curvature $\nabla \cdot\left(\nabla F_j / \|\nabla F_j\|\right)$ of the level set is nonzero at some point  in $\mathcal{B}_{\rho}(\bx_0)$. Then $\bF$ does not satisfy the component-wise local TCC in $\mathcal{B}_{\rho}(\bx_0)$.
\end{corollary}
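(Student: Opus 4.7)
The plan is to argue by contradiction using \cref{thm:level_set_tangent_plane}. Assume $\bF$ does satisfy the component-wise local TCC in $\mathcal{B}_{\rho}(\bx_0)$, and fix the component $F_j$ and the point $\bar{\bx}\in\mathcal{B}_{\rho}(\bx_0)$ at which $\nabla\cdot(\nabla F_j/\|\nabla F_j\|)(\bar{\bx})\ne 0$. Well-definedness of this expression forces $\nabla F_j(\bar{\bx})\ne\bzero$, and by continuity the unit normal field $\boldsymbol{n}(\bx):=\nabla F_j(\bx)/\|\nabla F_j(\bx)\|$ is $C^1$ on some open neighborhood $U\subseteq\mathcal{B}_{\rho}(\bx_0)$ of $\bar{\bx}$. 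Applying \cref{thm:level_set_tangent_plane} at $\bar{\bx}$ gives $\textbf{Lev}_j(\bar{\bx})\cap U=\textbf{T}_j(\bar{\bx})\cap U$, so locally at $\bar{\bx}$ the level set is a piece of the affine hyperplane perpendicular to $\nabla F_j(\bar{\bx})$.

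Because the divergence is invariant under orthogonal changes of coordinates, I would choose orthonormal coordinates in which $\nabla F_j(\bar{\bx})$ is parallel to $\bde_N$. The preceding step then identifies the slice $S:=\{\bx\in U:x_N=\bar{x}_N\}$ with $\textbf{Lev}_j(\bar{\bx})\cap U$, so $F_j$ is constant on $S$, and differentiating tangentially shows that the components $n_1,\dots,n_{N-1}$ of $\boldsymbol{n}$ vanish identically on $S$. Writing $\nabla\cdot\boldsymbol{n}(\bar{\bx})=\sum_{i=1}^N\partial_i n_i(\bar{\bx})$, each term with $i<N$ is a derivative along a direction tangent to $S$ of a function identically zero on $S$ and therefore vanishes; the remaining term $\partial_N n_N(\bar{\bx})$ vanishes by differentiating the unit-length identity $\|\boldsymbol{n}\|^2\equiv 1$ in the $\bde_N$ direction and using $n_i(\bar{\bx})=0$ for $i<N$ together with $n_N(\bar{\bx})=\pm 1$. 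Summing yields $\nabla\cdot\boldsymbol{n}(\bar{\bx})=0$, contradicting the hypothesis.

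The main (mild) obstacle is bookkeeping: \cref{thm:level_set_tangent_plane} controls only the level set through $\bar{\bx}$, so it pins down merely the tangential derivatives of the tangential components $n_i$ on $S$, while the normal derivatives $\partial_N n_i$ for $i<N$ are not constrained. It is essential to observe that these uncontrolled terms do not appear in the diagonal sum defining the divergence. Once this is recognized, the corollary reduces to the familiar geometric fact that an affine hyperplane has zero mean curvature, expressed through the standard divergence formula for the mean curvature of a level set.
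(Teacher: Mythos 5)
Your proof is correct and follows essentially the same route as the paper's: both argue by contradiction, invoke \cref{thm:level_set_tangent_plane} to conclude that the level set through the point of nonzero curvature is locally a piece of its tangent hyperplane, and deduce that the mean curvature there must vanish. Your write-up is in fact more careful than the paper's one-line justification, since you verify the vanishing of the divergence using only the constancy of the unit normal along the single level set through $\bar{\bx}$ together with the unit-length identity $\|\boldsymbol{n}\|^2\equiv 1$, rather than asserting that the normal field is identical throughout $\mathcal{B}_{\rho}(\bx_0)$.
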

\begin{proof}
We prove the result by a contradiction. Assume that
$\bF$ satisfies the component-wise local TCC in $\mathcal{B}_{\rho}(\bx_0)$.
By \cref{thm:level_set_tangent_plane}, the unit normal vectors $\nabla F_j/\|\nabla F_j\|$ of the hypersurface $\textbf{Lev}_j$ at the positions in $\mathcal{B}_{\rho}(\bx_0)$ are identical. Hence, the corresponding mean curvature of the hypersurface vanishes, which contradicts with the above assumption.
\end{proof}

As we can see from \cref{thm:nonzero_curvature_tcc}, there are some nonlinear mappings that do not satisfy the component-wise local TCC. If there exists a strictly convex component of mapping $\bF$ in $\mathcal{B}_{\rho}(\bx_0)$, for instance, $F_j$, then 
\begin{multline}\label{eq:curvature}
        \nabla \cdot \left( \frac{\nabla F_j(\bx)}{\|\nabla F_j(\bx)\|} \right) \\
        =\frac{1}{\|\nabla F_j(\bx)\|} \left(\textrm{tr}(\nabla^2 F_j(\bx)) - \frac{\nabla F_j(\bx)^{\tra}}{\|\nabla F_j(\bx)\|} \nabla^2 F_j(\bx)  \frac{\nabla F_j(\bx)}{\|\nabla F_j(\bx)\|} \right) > 0. 
\end{multline}
By \cref{thm:nonzero_curvature_tcc}, the $\bF$ does not satisfy the component-wise local TCC. 
Moreover, we will demonstrate that the mappings in MSCT fail to satisfy the component-wise local TCC under the given conditions in \cref{thm:map_MSCT_not_tcc}.

\subsection{The proposed condition}\label{subsec:RGDC_lemmas}

In this subsection, we put forward the {\it relative gradient discrepancy condition} (RGDC) to characterize the nonlinearity of a mapping. 

\begin{assumption}\label{ass:relative_grad_discrepancy}
    For every $j=1,\ldots,J$, the gradient of $F_j$ satisfies
    \begin{align*}
    \| \nabla F_j(\bx_1) - \nabla F_j(\bx_2) \|  \le \gamma \|\nabla F_j(\bx_1)\|, \quad \forall\ \bx_1, \bx_2\in\Omega\subseteq \Real^N 
    \end{align*}
with $0\le\gamma<1$.
\end{assumption}
\begin{remark}\label{remark:ass_grad}
When $F_j$ is linear, the \cref{ass:relative_grad_discrepancy} holds trivially. If there exists $\bar{\bx}$ such that $\|\nabla F_j(\bar{\bx})\| = 0$ for some $j$, then the above assumption implies that $\|\nabla F_j(\bx)\| = 0$ for every $\bx\in\Omega$. 
\end{remark}

\begin{remark}\label{remark:ass_grad2}
If $\nabla F_j$ is Lipschitz continuous in $\mathcal{B}_{\rho}(\bx_0)$, and $\|\nabla F_j(\bx)\|$ has a positive lower bound in $\mathcal{B}_{\rho}(\bx_0)$, then
\begin{align*}
\| \nabla F_j(\bx_1) - \nabla F_j(\bx_2) \| \le \textbf{Lip}(\nabla F_j) \| \bx_1 - \bx_2 \| \le
\frac{\textbf{Lip}(\nabla F_j)\rho}{\inf\limits_{\bx} \|\nabla F_j(\bx)\|} \| \nabla F_j(\bx_1)\|,
\end{align*}
where $\textbf{Lip}(\nabla F_j)$ is the Lipschitz constant of $\nabla F_j$ in $\mathcal{B}_{\rho}(\bx_0)$. This shows that the RGDC can be established in a sufficiently small region. 
\end{remark}

Moreover, it is possible that a certain mapping satisfies the RGDC in a relatively large set or even in a global sense. In \cref{lem:general_verify_grad,lem:specific_verify_grad}, we will prove that the mapping in MSCT reconstruction fulfills the RGDC globally under proper conditions.
Intuitively, not only does the RGDC imply that the gradient directions at different points do not differ too much, but also the magnitudes of gradients are close. 
More precisely, we have the following result.
\begin{proposition}\label{lemma:RGDC_bound}
Let the \ref{ass:relative_grad_discrepancy} hold in a set $\Omega$ for $\gamma \in [0, 1)$. For every component function $F_j$, if $\bx_1,\bx_2\in \Omega$, then
\begin{equation}\label{eq:angle_bounded}
    \frac{\langle \nabla F_j(\bx_1), \nabla F_j(\bx_2) \rangle}{\|\nabla F_j(\bx_1)\|\cdot \|\nabla F_j(\bx_2)\|} \ge  1-\frac{\gamma^2}{2},
\end{equation}
    and 
\begin{equation}\label{eq:length_bounded}
   (1-\gamma) \|\nabla F_j(\bx_2)\| \le \| \nabla F_j(\bx_1) \|\le (1+\gamma) \|\nabla F_j(\bx_2)\|.
\end{equation}
\end{proposition}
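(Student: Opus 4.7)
The plan is to exploit the symmetry of the assumption and then combine the triangle inequality with the cosine-rule expansion of $\|\nabla F_j(\bx_1) - \nabla F_j(\bx_2)\|^2$. First, I would observe that Assumption~\ref{ass:relative_grad_discrepancy} is symmetric under swapping $\bx_1$ and $\bx_2$: since it is required to hold for \emph{all} pairs in $\Omega$, we automatically get both
\begin{equation*}
\|\nabla F_j(\bx_1) - \nabla F_j(\bx_2)\| \le \gamma \|\nabla F_j(\bx_1)\|
\quad\text{and}\quad
\|\nabla F_j(\bx_1) - \nabla F_j(\bx_2)\| \le \gamma \|\nabla F_j(\bx_2)\|.
\end{equation*}

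For the norm bounds \eqref{eq:length_bounded}, I would just apply the triangle inequality in the two natural directions. The upper bound follows from $\|\nabla F_j(\bx_1)\| \le \|\nabla F_j(\bx_2)\| + \|\nabla F_j(\bx_1)-\nabla F_j(\bx_2)\| \le (1+\gamma)\|\nabla F_j(\bx_2)\|$ after using the symmetric form of the RGDC. The lower bound comes analogously from the reverse triangle inequality: $\|\nabla F_j(\bx_1)\| \ge \|\nabla F_j(\bx_2)\| - \|\nabla F_j(\bx_1)-\nabla F_j(\bx_2)\| \ge (1-\gamma)\|\nabla F_j(\bx_2)\|$. Swapping $\bx_1$ and $\bx_2$ gives the opposite one-sided inequality and thus the two-sided estimate.

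For the angle bound \eqref{eq:angle_bounded}, I would use the polarization identity
\begin{equation*}
2\langle \nabla F_j(\bx_1), \nabla F_j(\bx_2) \rangle
= \|\nabla F_j(\bx_1)\|^2 + \|\nabla F_j(\bx_2)\|^2 - \|\nabla F_j(\bx_1) - \nabla F_j(\bx_2)\|^2.
\end{equation*}
The two pointwise RGDC bounds multiply to give $\|\nabla F_j(\bx_1) - \nabla F_j(\bx_2)\|^4 \le \gamma^4 \|\nabla F_j(\bx_1)\|^2 \|\nabla F_j(\bx_2)\|^2$, hence $\|\nabla F_j(\bx_1) - \nabla F_j(\bx_2)\|^2 \le \gamma^2 \|\nabla F_j(\bx_1)\|\,\|\nabla F_j(\bx_2)\|$. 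Combined with AM--GM, $\|\nabla F_j(\bx_1)\|^2 + \|\nabla F_j(\bx_2)\|^2 \ge 2\|\nabla F_j(\bx_1)\|\,\|\nabla F_j(\bx_2)\|$, the right-hand side is at least $(2-\gamma^2)\|\nabla F_j(\bx_1)\|\,\|\nabla F_j(\bx_2)\|$, and dividing by $2\|\nabla F_j(\bx_1)\|\,\|\nabla F_j(\bx_2)\|$ yields \eqref{eq:angle_bounded}.

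There is no real obstacle here; the only subtle point is remembering to symmetrize the assumption to obtain the geometric-mean estimate $\|\nabla F_j(\bx_1)-\nabla F_j(\bx_2)\|^2 \le \gamma^2\|\nabla F_j(\bx_1)\|\,\|\nabla F_j(\bx_2)\|$, which is what allows the $\gamma^2/2$ (rather than $\gamma^2$) in the angle bound. One should also note, as in Remark~\ref{remark:ass_grad}, the degenerate case where $\|\nabla F_j(\bx)\| \equiv 0$ on $\Omega$, in which both inequalities are interpreted trivially.
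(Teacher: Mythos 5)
Your proposal is correct and follows essentially the same route as the paper: symmetrize the assumption, expand $\|\nabla F_j(\bx_1)-\nabla F_j(\bx_2)\|^2$ via the inner product, and finish with AM--GM for the angle bound, with the triangle inequality giving the norm bounds. The only cosmetic difference is that you combine the two symmetric bounds by multiplying them (a geometric-mean estimate) whereas the paper sums their squares (an arithmetic-mean estimate); both reduce to the same $1-\gamma^2/2$ after one application of AM--GM.
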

\begin{proof}
    See \cref{proof:RGDC_bounded} for the proof.
\end{proof}

We draw an example diagram of the RGDC in \cref{fig:RGDC} to show the results of \cref{lemma:RGDC_bound} in the two-dimensional case. 
\begin{figure}[ht]
    \centering
    \includegraphics[width=0.6\textwidth]{./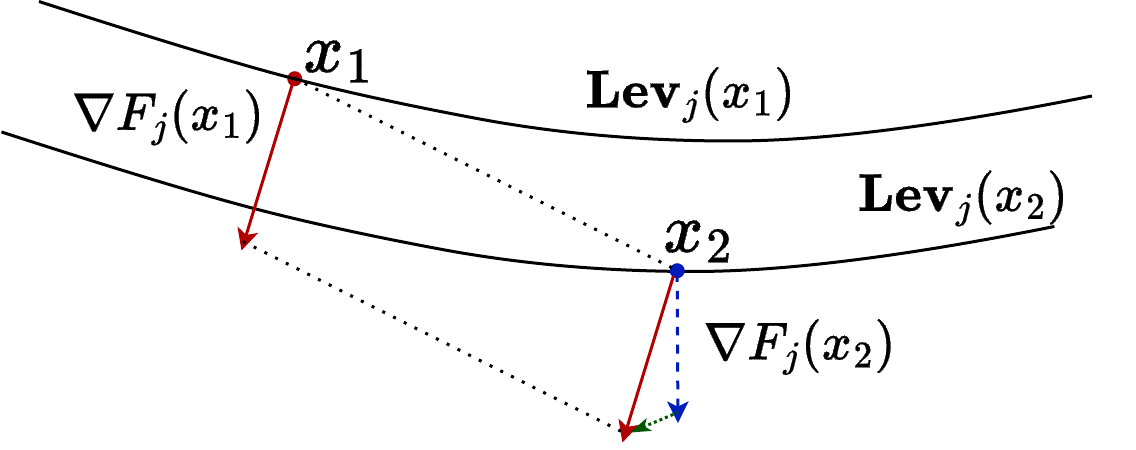}
    \vspace{3mm}
    \caption{The diagram depicts an example which fulfills the \ref{ass:relative_grad_discrepancy}.}
    \label{fig:RGDC}
\end{figure}
To further understand the RGDC, we have the following result.
\begin{proposition}\label{lemma:RGDC_property}
Let the \ref{ass:relative_grad_discrepancy} hold in a convex set $\Omega$ for $\gamma \in [0 ,1)$. For any $\bx_1,\bx_2\in \Omega$, if $\bF^{\prime}(\bx_1)$ has full column rank, then
\begin{equation}\label{eq:derived_nonlin_condi}
\left\|\bF(\bx_1) - \bF(\bx_2)-\bF^{\prime}(\bx_1) (\bx_1-\bx_2)\right\| \le \gamma\kappa_F(\bF^{\prime}(\bx_1))\left\|\bF^{\prime}(\bx_1 ) (\bx_1-\bx_2)\right\|.
\end{equation}
\end{proposition}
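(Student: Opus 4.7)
The plan is to prove the inequality componentwise via the fundamental theorem of calculus, upgrade the per-component estimate using the RGDC, then assemble the components into the Frobenius norm of $\bF'(\bx_1)$, and finally invoke the full column rank hypothesis to replace $\|\bx_1-\bx_2\|$ by $\|\bF'(\bx_1)(\bx_1-\bx_2)\|$ via the Moore--Penrose pseudoinverse.

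First I would fix an arbitrary component $F_j$. Since $\Omega$ is convex, the segment from $\bx_1$ to $\bx_2$ lies in $\Omega$, so the fundamental theorem of calculus gives
\begin{equation*}
F_j(\bx_2) - F_j(\bx_1) = \int_0^1 \nabla F_j(\bx_1 + t(\bx_2-\bx_1))^{\tra}(\bx_2-\bx_1)\,dt.
\end{equation*}
Rearranging and subtracting the linearization yields
\begin{equation*}
F_j(\bx_1) - F_j(\bx_2) - \nabla F_j(\bx_1)^{\tra}(\bx_1-\bx_2) = -\int_0^1 \bigl[\nabla F_j(\bx_1 + t(\bx_2-\bx_1)) - \nabla F_j(\bx_1)\bigr]^{\tra}(\bx_2-\bx_1)\,dt.
\end{equation*}
Applying Cauchy--Schwarz pointwise and then the \ref{ass:relative_grad_discrepancy} to the gradient difference at each point on the segment (also in $\Omega$), I would obtain the componentwise bound
\begin{equation*}
\bigl|F_j(\bx_1) - F_j(\bx_2) - \nabla F_j(\bx_1)^{\tra}(\bx_1-\bx_2)\bigr| \le \gamma\,\|\nabla F_j(\bx_1)\|\,\|\bx_1-\bx_2\|.
\end{equation*}

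Next I would square and sum over $j = 1,\ldots,J$. Since $\sum_j \|\nabla F_j(\bx_1)\|^2 = \|\bF'(\bx_1)\|_F^2$, this yields
\begin{equation*}
\|\bF(\bx_1) - \bF(\bx_2) - \bF'(\bx_1)(\bx_1-\bx_2)\| \le \gamma\,\|\bF'(\bx_1)\|_F\,\|\bx_1-\bx_2\|.
\end{equation*}
It is crucial here that I use the \emph{Frobenius} norm on the Jacobian rather than the operator norm, because the pointwise RGDC bound feeds into a sum of squares of gradient norms; this is precisely what produces the $\kappa_F$ factor, not the usual spectral condition number.

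Finally, because $\bF'(\bx_1)$ has full column rank, $\bF'(\bx_1)^{\dagger}\bF'(\bx_1) = I$, so
\begin{equation*}
\|\bx_1-\bx_2\| = \|\bF'(\bx_1)^{\dagger}\bF'(\bx_1)(\bx_1-\bx_2)\| \le \|\bF'(\bx_1)^{\dagger}\|\,\|\bF'(\bx_1)(\bx_1-\bx_2)\|.
\end{equation*}
Substituting this into the previous inequality and recognizing $\|\bF'(\bx_1)\|_F\,\|\bF'(\bx_1)^{\dagger}\| = \kappa_F(\bF'(\bx_1))$ yields the claim \cref{eq:derived_nonlin_condi}. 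I do not anticipate a serious obstacle; the only subtlety is the Frobenius-vs-spectral distinction above, and the need to apply the RGDC at intermediate points $\bx_1 + t(\bx_2-\bx_1) \in \Omega$, which is guaranteed by the convexity of $\Omega$.
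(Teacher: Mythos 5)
Your proof is correct, and it reaches the same constant $\gamma\kappa_F(\bF^{\prime}(\bx_1))$ as the paper, but by a more elementary route. The paper also starts from the integral form of the mean value theorem, but instead of working component-by-component it first aggregates the RGDC into the matrix bound $\|\bF^{\prime}(\bx_1)-\bF^{\prime}(\bx)\|_F\le\gamma\|\bF^{\prime}(\bx_1)\|_F$, then uses a QR decomposition of $\bF^{\prime}(\bx_1)$ to build an auxiliary matrix $A(\bx_1,\bx)$ with $A(\bx_1,\bx)\bF^{\prime}(\bx_1)=\bF^{\prime}(\bx)-\bF^{\prime}(\bx_1)$ and $\|A(\bx_1,\bx)\|\le\gamma\kappa_F(\bF^{\prime}(\bx_1))$, so that the integrand is bounded directly by $\|A(\bx_1,\bx_t)\|\,\|\bF^{\prime}(\bx_1)(\bx_1-\bx_2)\|$. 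You instead bound each component by $\gamma\|\nabla F_j(\bx_1)\|\,\|\bx_1-\bx_2\|$, sum squares to get the Frobenius norm, and only at the end convert $\|\bx_1-\bx_2\|$ into $\|\bF^{\prime}(\bx_1)(\bx_1-\bx_2)\|$ via $\bF^{\prime}(\bx_1)^{\dagger}\bF^{\prime}(\bx_1)=I$. The two factorizations of the condition number are equivalent (the paper's $\|R^{-1}\|$ is exactly your $\|\bF^{\prime}(\bx_1)^{\dagger}\|$), so the final bound is identical; your version avoids the auxiliary matrix entirely and has the minor advantage that the intermediate estimate $\|\bF(\bx_1)-\bF(\bx_2)-\bF^{\prime}(\bx_1)(\bx_1-\bx_2)\|\le\gamma\|\bF^{\prime}(\bx_1)\|_F\|\bx_1-\bx_2\|$ holds without the full-column-rank hypothesis, which is then only invoked for the last substitution. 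Your observation that the Frobenius norm (rather than the spectral norm) is forced by summing the squared gradient norms is exactly the mechanism the paper uses as well.
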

\begin{proof}
    See \cref{proof:RGDC} for the proof.
\end{proof}

If $\bF$ is linear, the condition of full column rank guarantees the uniqueness of the solution to \cref{eq:nonlinear_system}. Based on \cref{lemma:RGDC_property}, we can generalize such uniqueness to the nonlinear case of \cref{eq:nonlinear_system} when the mapping satisfies the RGDC as the following result.

\begin{proposition}\label{lemma:unique_solution}
    Let the \ref{ass:relative_grad_discrepancy} hold in a convex set $\Omega$ for $\gamma \in [0 ,1)$. Suppose that the nonlinear system \cref{eq:nonlinear_system} has a solution $\bx^*\in\Omega$ and $\bF^{\prime}(\bx^*)$ has full column rank. If $\gamma\kappa_F(\bF^{\prime}(\bx^*)) < 1$, then the solution is unique in $\Omega$.
\end{proposition}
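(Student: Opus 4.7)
The plan is to proceed by contradiction and apply \cref{lemma:RGDC_property} with $\bx^*$ in the slot where full column rank is available. Suppose there exists another solution $\tilde{\bx} \in \Omega$ with $\bF(\tilde{\bx}) = \bzero$. Since $\Omega$ is convex and both $\bx^*, \tilde{\bx} \in \Omega$, the \ref{ass:relative_grad_discrepancy} applies along the segment joining them, and I may invoke \cref{lemma:RGDC_property} with $\bx_1 = \bx^*$ and $\bx_2 = \tilde{\bx}$.

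Setting these into \cref{eq:derived_nonlin_condi} and using $\bF(\bx^*) = \bF(\tilde{\bx}) = \bzero$, the left-hand side collapses to $\|\bF^{\prime}(\bx^*)(\bx^*-\tilde{\bx})\|$, and the inequality becomes
\begin{equation*}
\|\bF^{\prime}(\bx^*)(\bx^*-\tilde{\bx})\| \le \gamma\,\kappa_F(\bF^{\prime}(\bx^*))\,\|\bF^{\prime}(\bx^*)(\bx^*-\tilde{\bx})\|.
\end{equation*}
Because $\gamma\kappa_F(\bF^{\prime}(\bx^*)) < 1$ by hypothesis, this forces $\bF^{\prime}(\bx^*)(\bx^*-\tilde{\bx}) = \bzero$. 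The full column rank of $\bF^{\prime}(\bx^*)$ then yields $\bx^* = \tilde{\bx}$, contradicting the assumption of a second solution.

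There is no real obstacle here beyond making sure the hypotheses of \cref{lemma:RGDC_property} line up: the convexity of $\Omega$, the full column rank of the Jacobian at $\bx^*$ (as opposed to at $\tilde{\bx}$), and the bound $\gamma \kappa_F(\bF^{\prime}(\bx^*))<1$ are all exactly what is needed to turn the derived nonlinearity condition into a strict contraction on the linear image $\bF^{\prime}(\bx^*)(\bx^*-\tilde{\bx})$. The proof therefore reduces to one application of the earlier proposition followed by the rank argument, and should fit in a few lines.
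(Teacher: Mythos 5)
Your proof is correct and takes essentially the same route as the paper's: both apply \cref{lemma:RGDC_property} with $\bx_1=\bx^*$, observe that since $\bF(\bx^*)=\bF(\tilde{\bx})=\bzero$ the left-hand side of \cref{eq:derived_nonlin_condi} reduces to $\|\bF^{\prime}(\bx^*)(\bx^*-\tilde{\bx})\|$, and then use $\gamma\kappa_F(\bF^{\prime}(\bx^*))<1$ together with the full column rank of $\bF^{\prime}(\bx^*)$ to force $\tilde{\bx}=\bx^*$. There is no gap.
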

\begin{proof}
    See \cref{proof:RGDC_unique} for the proof.
\end{proof}

Next we discuss the relationship between 
\cref{eq:derived_nonlin_condi} in 
\cref{lemma:RGDC_property} and the local TCC \cref{eq:local_tcc_mapping}.
On the one hand, under the conditions of 
\cref{lemma:RGDC_property}, 
\cref{eq:derived_nonlin_condi} holds for a special point $\bx_1$ and any point $\bx_2$ in convex set $\Omega$. 
Using triangle inequality, we obtain 
\begin{multline*}
    \gamma\kappa_F(\bF^{\prime}(\bx_1)) \left\|\bF^{\prime}(\bx_1 ) (\bx_1-\bx_2)\right\| \ge \left\|\bF(\bx_1) - \bF(\bx_2)-\bF^{\prime}(\bx_1 ) (\bx_1-\bx_2)\right\|\\
    \ge \left\|\bF^{\prime}(\bx_1 ) (\bx_1-\bx_2)\| - \|\bF(\bx_1) - \bF(\bx_2)\right\|.
\end{multline*}
Arranging the above terms and using 
\cref{eq:derived_nonlin_condi} imply that
\begin{align*}
    \left\|\bF(\bx_1) - \bF(\bx_2)-\bF^{\prime}(\bx_1 ) (\bx_1-\bx_2)\right\| \le \frac{\gamma\kappa_F(\bF^{\prime}(\bx_1))}{1-\gamma\kappa_F(\bF^{\prime}(\bx_1))} \left\|\bF(\bx_1) - \bF(\bx_2)\right\|.
\end{align*}
If the $\gamma$ further satisfies $\gamma\kappa_F(\bF^{\prime}(\bx_1))<1/3$, the above inequality becomes the formula in \cref{eq:local_tcc_mapping} for  
\begin{equation*}
    \eta = \frac{\gamma\kappa_F(\bF^{\prime}(\bx_1))}{1-\gamma\kappa_F(\bF^{\prime}(\bx_1))} < \frac{1}{2}.
\end{equation*}
Note that this requires the additional condition that  $\bF^{\prime}(\bx_1)$ has full column rank. However, the local TCC is established
for any point within $\Omega$.

On the other hand, when \cref{eq:local_tcc_mapping} holds, we have
\begin{equation*}
    \left\|\bF(\bx_1) - \bF(\bx_2)-\bF^{\prime}(\bx_1 ) (\bx_1-\bx_2)\right\| \le \frac{\eta}{1-\eta} \left\|\bF^{\prime}(\bx_1 ) (\bx_1-\bx_2)\right\|.
\end{equation*}
It is similar to the formula in \cref{eq:derived_nonlin_condi}.

\subsection{Convergence analysis of the NKM}

In this subsection, we provide the convergence analysis of the NKM for solving \cref{eq:nonlinear_system} when $\bF$ is component-wise convex and satisfies the \ref{ass:relative_grad_discrepancy}. 
Firstly, a result of the general NKM (\cref{alg:nonlinear_kacz}) is presented as below.
\begin{proposition}\label{lem:NKM_convex}
    If $\bF(\bx)$ is component-wise convex in $\mathbb{R}^N$, for any given initial point, the iteration point $\bx^{k+1}$ generated by \cref{alg:nonlinear_kacz} fulfills $F_{j_k}(\bx^{k+1})\ge0$. 
\end{proposition}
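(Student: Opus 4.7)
The statement is essentially an immediate consequence of the first-order characterization of convexity combined with the explicit form of the Kaczmarz update, so my plan is short and direct.

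The plan is to apply the standard convexity inequality
\begin{equation*}
F_{j_k}(\bx^{k+1}) \ge F_{j_k}(\bx^{k}) + \nabla F_{j_k}(\bx^{k})^{\tra}(\bx^{k+1}-\bx^{k}),
\end{equation*}
which is valid since $F_{j_k}$ is convex on $\mathbb{R}^{N}$ (by the component-wise convexity of $\bF$) and, under the convention that the iteration is only executed when $\nabla F_{j_k}(\bx^{k}) \neq \bzero$, continuously differentiable at $\bx^{k}$.

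Next, I would substitute the explicit NKM update from \cref{eq:non_kacz} into the inner product on the right-hand side. A direct computation gives
\begin{equation*}
\nabla F_{j_k}(\bx^{k})^{\tra}(\bx^{k+1}-\bx^{k}) = -\frac{F_{j_k}(\bx^{k})}{\|\nabla F_{j_k}(\bx^{k})\|^{2}}\,\|\nabla F_{j_k}(\bx^{k})\|^{2} = -F_{j_k}(\bx^{k}).
\end{equation*}
Plugging this into the convexity inequality above yields
\begin{equation*}
F_{j_k}(\bx^{k+1}) \ge F_{j_k}(\bx^{k}) - F_{j_k}(\bx^{k}) = 0,
\end{equation*}
which is precisely the claimed inequality.

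There is no genuine obstacle here; the only subtlety worth flagging is the degenerate case $\nabla F_{j_k}(\bx^{k}) = \bzero$, for which the update \cref{eq:non_kacz} is ill-defined and the claim is vacuous. Everything else is a one-line substitution, so the proof is essentially self-contained and does not require the RGDC, solvability of the system, or any index-selection strategy.
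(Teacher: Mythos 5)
Your proof is correct and follows exactly the same route as the paper: the first-order convexity inequality $F_{j_k}(\bx^{k+1}) \ge F_{j_k}(\bx^{k}) + \nabla F_{j_k}(\bx^{k})^{\tra}(\bx^{k+1}-\bx^{k})$ combined with the observation that the NKM update makes the right-hand side equal to zero. The remark about the degenerate case $\nabla F_{j_k}(\bx^{k}) = \bzero$ is a reasonable aside but not needed, since the update \cref{eq:non_kacz} implicitly presumes a nonzero gradient.
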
 
\begin{proof}
    See \cref{proof:NKM_convex} for the proof.
\end{proof}

The result in \cref{lem:NKM_convex} demonstrates that when applying the NKM to solve the nonlinear system with component-wise convex mapping, it is guaranteed that none of the iteration points will fall within the set $\{\bx \mid F_j(\bx) < 0, j=1,\ldots,J \}$, regardless of what the initial point or selection strategy is employed. 

To proceed, we propose a general index selection strategy as follows. 
\begin{strategy}\label{stra:non_negative}
Select an index $j_k$ such that $F_{j_{k}}(\bx^k) >0$.
\end{strategy}
According to \cref{lem:NKM_convex}, we have $F_{j_k}(\bx^{k+1}) \ge0$, indicating that there almost exists feasible index for strategy \ref{stra:non_negative}. Furthermore, we have the following results for the NKM using \cref{stra:non_negative}.
\begin{lemma}\label{lem:dist_decrease}
    Let $\{\bx^{k}\}$ be the sequence generated by \cref{alg:nonlinear_kacz} using \cref{stra:non_negative}. 
    Suppose that $\bF(\bx)$ is component-wise convex in $\mathbb{R}^N$, then 
    \begin{align*}
        \|\bx^{k+1}-\bx^*\| \le \|\bx^{k}-\bx^*\|, \quad \forall k\ge0.
    \end{align*}
\end{lemma}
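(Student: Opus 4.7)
The plan is to expand $\|\bx^{k+1}-\bx^*\|^2$ using the NKM update rule and reduce the desired inequality to a single scalar condition on $\langle \nabla F_{j_k}(\bx^k), \bx^k-\bx^*\rangle$, which will then follow from component-wise convexity together with the fact that $\bx^*$ is a zero of $\bF$.

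Concretely, starting from
\begin{equation*}
\bx^{k+1} - \bx^* = \bx^k - \bx^* - \frac{F_{j_k}(\bx^k)}{\|\nabla F_{j_k}(\bx^k)\|^2}\,\nabla F_{j_k}(\bx^k),
\end{equation*}
I would square the norm and simplify to obtain
\begin{equation*}
\|\bx^k-\bx^*\|^2 - \|\bx^{k+1}-\bx^*\|^2
= \frac{2F_{j_k}(\bx^k)\,\langle \nabla F_{j_k}(\bx^k),\bx^k-\bx^*\rangle}{\|\nabla F_{j_k}(\bx^k)\|^2}
-\frac{F_{j_k}(\bx^k)^2}{\|\nabla F_{j_k}(\bx^k)\|^2}.
\end{equation*}
Thus it suffices to show $2\langle \nabla F_{j_k}(\bx^k),\bx^k-\bx^*\rangle \ge F_{j_k}(\bx^k)$, since $F_{j_k}(\bx^k)>0$ by \cref{stra:non_negative} and the denominator is a positive scalar; note that this step also implicitly requires $\|\nabla F_{j_k}(\bx^k)\|\neq 0$, which follows because $F_{j_k}(\bx^k)>F_{j_k}(\bx^*)=0$ forces a nontrivial gradient for a convex function.

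The key step is the convexity inequality for $F_{j_k}$ at the pair $(\bx^k,\bx^*)$:
\begin{equation*}
F_{j_k}(\bx^*) \ge F_{j_k}(\bx^k) + \langle \nabla F_{j_k}(\bx^k), \bx^* - \bx^k\rangle.
\end{equation*}
Since $F_{j_k}(\bx^*)=0$, rearrangement gives $\langle \nabla F_{j_k}(\bx^k),\bx^k-\bx^*\rangle \ge F_{j_k}(\bx^k)$, hence $2\langle \nabla F_{j_k}(\bx^k),\bx^k-\bx^*\rangle \ge 2F_{j_k}(\bx^k) \ge F_{j_k}(\bx^k)$. Plugging this back yields
\begin{equation*}
\|\bx^k-\bx^*\|^2 - \|\bx^{k+1}-\bx^*\|^2 \ge \frac{F_{j_k}(\bx^k)^2}{\|\nabla F_{j_k}(\bx^k)\|^2} \ge 0,
\end{equation*}
which is the claimed monotonicity.

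There is no real obstacle here: the proof is essentially a two-line computation once convexity is invoked at $(\bx^k,\bx^*)$. The only care needed is (i) confirming by \cref{stra:non_negative} that $F_{j_k}(\bx^k)$ is strictly positive so the iteration is well-defined and the direction of the inequality is preserved when dividing, and (ii) noting that this argument in fact yields the stronger Fejér-type bound above, which will later be useful for proving actual convergence (not just monotonicity) under the \ref{ass:relative_grad_discrepancy}.
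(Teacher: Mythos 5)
Your proof is correct and follows essentially the same route as the paper's: expand $\|\bx^{k+1}-\bx^*\|^2$ via the NKM update, apply the first-order convexity inequality for $F_{j_k}$ at the pair $(\bx^k,\bx^*)$ with $F_{j_k}(\bx^*)=0$, and use $F_{j_k}(\bx^k)>0$ from the strategy to conclude; you even recover the same Fej\'er-type bound $\|\bx^{k}-\bx^*\|^2-\|\bx^{k+1}-\bx^*\|^2\ge F_{j_k}(\bx^k)^2/\|\nabla F_{j_k}(\bx^k)\|^2$ that the paper later uses in its convergence theorem. Your explicit remark that $\nabla F_{j_k}(\bx^k)\neq 0$ is forced by convexity and $F_{j_k}(\bx^k)>0$ is a small but welcome addition the paper leaves implicit.
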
 
\begin{proof}
    See \cref{proof:dist_decrease} for the proof.
\end{proof}

For the case of \cref{lem:dist_decrease},
the iterative scheme in \cref{eq:non_kacz} is equivalent to solve a convex feasibility problem, which was also studied in \cite{Lorenz23NBK,MR_NKM23}. 
\begin{lemma}\label{lem:RGDC_result}
Suppose that the conditions in \cref{lem:dist_decrease} hold and \ref{ass:relative_grad_discrepancy} holds in $\mathcal{B}_{\rho}(\bx^*)$ for $\gamma  \in [0, 1)$. If $\bx^0\in\mathcal{B}_{\rho}(\bx^*)$, then
\begin{equation*}
    |F_{j_k}(\bx^{k+1}) | \le c(\gamma) |F_{j_k}(\bx^{k}) |, \quad \forall k\ge0, 
\end{equation*}
where $c(\gamma)= (\gamma+\gamma^2/2-\gamma^3/2) \in [0, 1)$. 
\end{lemma}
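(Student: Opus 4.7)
The plan is to apply the fundamental theorem of calculus to $F_{j_k}$ along the segment from $\bx^k$ to $\bx^{k+1}$ and to exploit that the nonlinear Kaczmarz update is exactly the step along $\nabla F_{j_k}(\bx^k)$ that would linearly annihilate $F_{j_k}(\bx^k)$. Setting $\bdv := \bx^{k+1}-\bx^k = -\frac{F_{j_k}(\bx^k)}{\|\nabla F_{j_k}(\bx^k)\|^2}\nabla F_{j_k}(\bx^k)$, one has $\nabla F_{j_k}(\bx^k)^{\tra}\bdv = -F_{j_k}(\bx^k)$, so the linear term cancels and
\begin{equation*}
F_{j_k}(\bx^{k+1}) = \int_0^1 \bigl(\nabla F_{j_k}(\bx^k + t\bdv) - \nabla F_{j_k}(\bx^k)\bigr)^{\tra}\bdv\,dt.
\end{equation*}
It therefore suffices to bound the integrand pointwise in $t$ by $c(\gamma)|F_{j_k}(\bx^k)|$.

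Before invoking the RGDC along the segment, I would use \cref{lem:dist_decrease} inductively to show $\bx^k,\bx^{k+1}\in\mathcal{B}_{\rho}(\bx^*)$, so that by convexity of the ball the whole segment $\{\bx^k+t\bdv:t\in[0,1]\}$ lies in $\mathcal{B}_{\rho}(\bx^*)$, where the \ref{ass:relative_grad_discrepancy} and hence \cref{lemma:RGDC_bound} apply.

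The essential algebraic reduction is that $\bdv$ is a scalar multiple of $\nabla F_{j_k}(\bx^k)$. Writing $a_t:=\|\nabla F_{j_k}(\bx^k+t\bdv)\|$, $b:=\|\nabla F_{j_k}(\bx^k)\|$, and $\cos\theta_t := \langle \nabla F_{j_k}(\bx^k+t\bdv),\nabla F_{j_k}(\bx^k)\rangle/(a_t b)$, the integrand simplifies to
\begin{equation*}
\bigl(\nabla F_{j_k}(\bx^k+t\bdv)-\nabla F_{j_k}(\bx^k)\bigr)^{\tra}\bdv = -F_{j_k}(\bx^k)\left(\frac{a_t\cos\theta_t}{b}-1\right),
\end{equation*}
so the task collapses to establishing $\left|a_t\cos\theta_t/b - 1\right|\le c(\gamma)$.

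The main obstacle, and the reason the particular constant $c(\gamma)=\gamma+\gamma^2/2-\gamma^3/2$ rather than the cruder $\gamma$ appears, is a careful two-sided estimate using both bounds of \cref{lemma:RGDC_bound}: the length bound gives $(1-\gamma)b\le a_t\le(1+\gamma)b$ and the angle bound gives $\cos\theta_t\ge 1-\gamma^2/2$. I would split into cases: if $a_t\cos\theta_t\ge b$, then $a_t\cos\theta_t-b\le a_t-b\le\gamma b$; if $a_t\cos\theta_t<b$, then $b-a_t\cos\theta_t\le b\bigl[1-(1-\gamma)(1-\gamma^2/2)\bigr] = b(\gamma+\gamma^2/2-\gamma^3/2)$. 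Since the second bound dominates the first for $\gamma\in[0,1)$, in either case $|a_t\cos\theta_t/b - 1|\le c(\gamma)$. Integrating over $t\in[0,1]$ yields $|F_{j_k}(\bx^{k+1})|\le c(\gamma)|F_{j_k}(\bx^k)|$. A direct inspection of $c(\gamma) = \gamma+\gamma^2(1-\gamma)/2$ on $[0,1)$ confirms $c(\gamma)\in[0,1)$.
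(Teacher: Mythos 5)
Your proof is correct and follows essentially the same route as the paper's: the paper uses the mean value theorem to reduce $|F_{j_k}(\bx^{k+1})|$ to $|F_{j_k}(\bx^k)|\cdot\bigl|1-\langle\nabla F_{j_k}(\bx_\xi),\nabla F_{j_k}(\bx^k)\rangle/\|\nabla F_{j_k}(\bx^k)\|^2\bigr|$ and then applies exactly the angle and length bounds of \cref{lemma:RGDC_bound} to conclude the deviation from $1$ is at most $\max\{\gamma,\,1-(1-\gamma)(1-\gamma^2/2)\}=c(\gamma)$, which is your case analysis. Your use of the integral form of Taylor's theorem in place of the mean value theorem is an inessential variation, and your preliminary step keeping the segment inside $\mathcal{B}_\rho(\bx^*)$ via \cref{lem:dist_decrease} matches the paper's.
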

\begin{proof}
    See \cref{proof:RGDC_result} for the proof.
\end{proof}

Even though both of the distance and residual decrease as shown in \cref{lem:dist_decrease} and \cref{lem:RGDC_result}, we still can not conclude that the sequence generated by the NKM converges to the solution. For example, consider the scenario that there exists $k_0$ such that $j_k = j_{k_0}$ for $k \ge k_0$. As in the previous analysis, such selected index always fulfills strategy \ref{stra:non_negative}. Then, by \cref{lem:RGDC_result},
\begin{equation*}
    |F_{j_{k_0}}(\bx^k)|\le c(\gamma) |F_{j_{k_0}}(\bx^{k-1})|\le \cdots \le c(\gamma)^{k-k_0} |F_{j_{k_0}}(\bx^{k_0})|.
\end{equation*}
Therefore, $F_{j_{k_0}}(\bx^k) \rightarrow 0$ as $k\rightarrow \infty$. But the iteration sequence may not converge to the solution of the entire system of equations, as shown in \cref{fig:stra1}.
\begin{figure}[htbp]
    \centering
    \includegraphics[width=0.49\textwidth]{./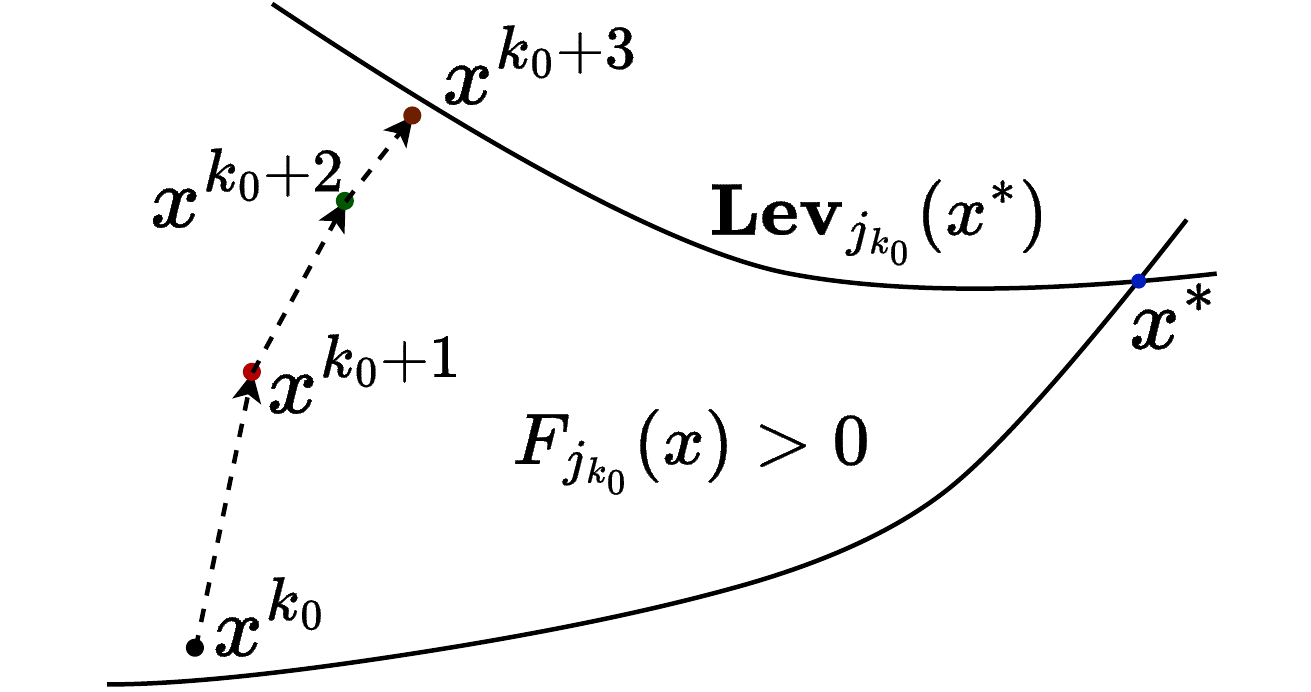}
        \vspace{3mm}
    \caption{The figure depicts an example for the iterated procedure of the NKM using \cref{stra:non_negative}.}
    \label{fig:stra1}
\end{figure}

To consider all the equations in the system, we introduce another strategy as below. 
\begin{strategy}\label{stra:infinite_often}
    For every $j=1,\ldots,J$, and for any positive integer $n$, there always exists $k\ge n$ such that $j_k=j$. Namely, each component in \cref{eq:nonlinear_system} is selected infinitely.
\end{strategy}
Obviously, the cyclic strategy is a specific example satisfying \cref{stra:infinite_often}, whose index is selected as
\begin{equation}\label{eq:kacz_cyclic}
    j_k^{\text{cyclic}} := (k\ \textbf{mod}\ J) + 1.
\end{equation}

Combining with \cref{stra:infinite_often}, we can prove the following convergence result. 
\begin{theorem}\label{thm:nkm_stra12_converge}
Let $\{\bx^k\}$ be the sequence generated by \cref{alg:nonlinear_kacz} using the strategy satisfying both strategies  \ref{stra:non_negative} and \ref{stra:infinite_often}.
Suppose that $\bF(\bx)$ is component-wise convex. Then the sequence converges to a solution of \cref{eq:nonlinear_system}.\\
\indent Furthermore, assume that $\bF^{\prime}(\bx^*)$ has full column rank and the \ref{ass:relative_grad_discrepancy} holds in $\mathcal{B}_{\rho}(\bx^*)$ for $\gamma \in [0, 1)$ such that $\gamma\kappa_F(\bF^{\prime}(\bx^*))<1$.
If the initial point $\bx^0\in\mathcal{B}_{\rho}(\bx^*)$, then the sequence converges to
the unique solution $\bx^*$ in $\mathcal{B}_{\rho}(\bx^*)$.
\end{theorem}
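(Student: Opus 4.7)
The plan is to upgrade \cref{lem:dist_decrease} to a summability of step lengths, use this to show that every cluster point of $\{\bx^k\}$ solves the system, and close with a Fejer--type argument to conclude convergence to a single solution; the RGDC and full column rank in the second claim will enter only at the very end to identify that limit as $\bx^*$. For the first (refinement) step, Strategy~I gives $F_{j_k}(\bx^k)>0$, and convexity of $F_{j_k}$ together with $F_{j_k}(\bx^*)=0$ yields $\nabla F_{j_k}(\bx^k)^{\tra}(\bx^k-\bx^*)\ge F_{j_k}(\bx^k)$. Substituting this into the expansion of $\|\bx^{k+1}-\bx^*\|^2$ sharpens \cref{lem:dist_decrease} to
\[
\|\bx^{k+1}-\bx^*\|^2\le\|\bx^k-\bx^*\|^2-\|\bx^{k+1}-\bx^k\|^2.
\]
Telescoping shows that $\{\bx^k\}$ lies in the compact ball $\overline{\mathcal{B}}_{\|\bx^0-\bx^*\|}(\bx^*)$, that $\|\bx^k-\bx^*\|$ decreases monotonically to some $d\ge 0$, and that $\sum_k\|\bx^{k+1}-\bx^k\|^2<\infty$, so $\|\bx^{k+1}-\bx^k\|\to 0$. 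Continuity of $\nabla F_j$ on this compact ball bounds the gradients from above, whence $F_{j_k}(\bx^k)=\|\nabla F_{j_k}(\bx^k)\|\cdot\|\bx^{k+1}-\bx^k\|\to 0$.

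Next, for any cluster point $\bar{\bx}$ with $\bx^{k_m}\to\bar{\bx}$ and any fixed $j\in\{1,\dots,J\}$, Strategy~II yields, for each $m$, a smallest $k'_m\ge k_m$ with $j_{k'_m}=j$. I plan to show $\bx^{k'_m}\to\bar{\bx}$ along a (sub)subsequence by combining $\|\bx^{i+1}-\bx^i\|\to 0$ with the telescoped tail bound
\[
\sum_{i=k_m}^{k'_m-1}\|\bx^{i+1}-\bx^i\|^2\le\|\bx^{k_m}-\bx^*\|^2-\|\bx^{k'_m}-\bx^*\|^2\longrightarrow 0.
\]
Continuity then gives $F_j(\bar{\bx})=\lim_m F_{j_{k'_m}}(\bx^{k'_m})=0$; since $j$ was arbitrary, $\bF(\bar{\bx})=\bzero$. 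Applying \cref{lem:dist_decrease} now with $\bar{\bx}$ itself in place of $\bx^*$ makes $\|\bx^k-\bar{\bx}\|$ monotone non-increasing, and since a subsequence tends to $0$ the whole sequence does, giving $\bx^k\to\bar{\bx}$.

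For the refined claim, $\bx^0\in\mathcal{B}_\rho(\bx^*)$ together with the refinement of Step~1 keeps $\bx^k\in\mathcal{B}_\rho(\bx^*)$ for every $k$, so the cluster solution $\bar{\bx}$ above lies in $\overline{\mathcal{B}}_\rho(\bx^*)$; \cref{lemma:unique_solution} then forces $\bar{\bx}=\bx^*$. The pivotal and delicate piece is the transfer $\bx^{k'_m}\to\bar{\bx}$ above. For index selection rules with uniformly bounded gaps between successive picks of each index, as for the cyclic rule \cref{eq:kacz_cyclic} where $k'_m-k_m<J$, a single triangle inequality on $\sum_i\|\bx^{i+1}-\bx^i\|$ closes the argument immediately. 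Under the weaker Strategy~II the gap may diverge, so one has to exploit both the $\ell^2$ tail bound above and the fact that $\|\bx^{k+1}-\bx^k\|\to 0$ forces the cluster set of $\{\bx^k\}$ to be a compact connected subset of the sphere of radius $d$ around $\bx^*$; ruling out cluster points at which some $F_j$ fails to vanish is essentially the sole nontrivial content of the theorem, the subsequent Fejer-bookkeeping being standard.
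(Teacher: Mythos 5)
Your reduction of the first claim to the single step ``some cluster point of $\{\bx^k\}$ solves the whole system'' has the right skeleton, and everything around that step is sound: the sharpened inequality $\|\bx^{k+1}-\bx^*\|^2\le\|\bx^k-\bx^*\|^2-\|\bx^{k+1}-\bx^k\|^2$ is exactly the paper's \cref{eq:decrease_dist_ge0}, the deduction $F_{j_k}(\bx^k)=\|\nabla F_{j_k}(\bx^k)\|\,\|\bx^{k+1}-\bx^k\|\to 0$ is correct, and the closing Fej\'er argument (reapply \cref{lem:dist_decrease} with the cluster solution in place of $\bx^*$) together with \cref{lemma:unique_solution} for the second claim is fine. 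But the pivotal transfer $\bx^{k'_m}\to\bar{\bx}$ is not established, and you concede as much. The $\ell^2$ tail bound $\sum_{i=k_m}^{k'_m-1}\|\bx^{i+1}-\bx^i\|^2\to 0$ does not control $\|\bx^{k'_m}-\bx^{k_m}\|\le\sum_{i=k_m}^{k'_m-1}\|\bx^{i+1}-\bx^i\|$ once the gaps $k'_m-k_m$ are unbounded, which \cref{stra:infinite_often} permits; and the observation that the cluster set is compact, connected and contained in a sphere about $\bx^*$ does not by itself exclude the scenario in which index $j$ is only ever selected when the iterate happens to lie near a portion of the cluster set on which $F_j$ vanishes, while $F_j>0$ elsewhere on that set. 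So, as written, the proposal proves the theorem only for bounded-gap selections such as the cyclic rule \cref{eq:kacz_cyclic}, not for general \cref{stra:infinite_often}.

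For comparison, the paper's own proof takes a different route at precisely this point: it deduces directly from the summability of $|F_{j_k}(\bx^k)|^2/\|\nabla F_{j_k}(\bx^k)\|^2$ that $\{\bx^k\}$ is Cauchy, by applying the estimate $\bigl\|\sum_{k=k_1}^{k_2-1}a_k\bigr\|^2\le\sum_{k=k_1}^{k_2-1}\|a_k\|^2$ to the increments $a_k=\bx^{k+1}-\bx^k$. That inequality is not valid for arbitrary vectors (take $a_1=a_2$), so you cannot simply import that step to close your gap; it would require either an $\ell^1$ bound on the increments or some near-orthogonality of successive steps, neither of which is proved. In short, your diagnosis of where the sole nontrivial content of the theorem lies is accurate, but the proposal does not supply that content, so it is incomplete as a proof of the statement under the full generality of \cref{stra:infinite_often}.
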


\begin{proof}
According to the result in \cref{lem:dist_decrease}, the decrease of the distance from the iteration point to the solution $\bx^*$ is at least
\begin{equation}\label{eq:decrease_dist_ge0}
 \| \bx^{k+1} -\bx^* \|^2- \| \bx^{k}-\bx^* \|^2 \le -\frac{|F_{j_k}(\bx^{k})|^2  }{\|\nabla F_{j_k}(\bx^{k})\|^2}.
\end{equation}
Hence, summing it over $k=1,2,\ldots,\widetilde{k}$ yields
\begin{equation*}
    \|\bx^0 - \bx^*\|^2 - \|\bx^{\widetilde{k}} - \bx^*\|^2\ge \sum_{k=0}^{\widetilde{k}-1} \frac{|F_{j_k}(\bx^{k})|^2 }{\|\nabla F_{j_k}(\bx^{k})\|^2}.
\end{equation*}
Let $\widetilde{k}\rightarrow\infty$, the series on the right-hand side of the above inequality converges. Then $\forall \epsilon>0$, there exists $k$ such that $\forall k_2>k_1\ge k(\epsilon)$
\begin{align*}
    \|\bx^{k_2} - \bx^{k_1}\|^2 = \left\|\sum_{k=k_1}^{k_2 -1} \frac{F_{j_k}(\bx^{k})}{\|\nabla F_{j_k}(\bx^{k})\|^2} \nabla F_{j_k}(\bx^{k}) \right\|^2 \le \sum_{k=k_1}^{k_2 -1}
    \frac{(F_{j_k}(\bx^{k}))^2}{\|\nabla F_{j_k}(\bx^{k})\|^2} \le \epsilon.
\end{align*} 
This illustrates that $\{\bx^k\}$ is a Cauchy sequence. Hence, it converges to some point $\bar{\bx}^*$. Moreover, we can conclude that $\|\nabla F_{j_k}(\bx^k)\|$ is upper bounded for all $k\ge0$. Then, by the last inequality of the above formula, $\forall \varepsilon>0$, there exists $n$ such that
\begin{equation*}
    |F_{j_k}(\bx^k)|  \le \varepsilon \quad \text{as} ~ k\ge n.
\end{equation*}
By \cref{stra:infinite_often}, for every $l=1, \ldots, J$, there exists $k_l\ge n$ such that $j_{k_l}=l$. Therefore, we have
\begin{equation*}
    |F_{l}(\bx^{k_l})| = |F_{j_{k_l}}(\bx^{k_l})|  \le \varepsilon.
\end{equation*} 
Together with the continuity of $F_l$, we obtain $|F_l(\bar{\bx}^*)|=0$. Hence, $\bar{\bx}^*$ is also a solution of the nonlinear system. 

Furthermore, by \cref{lemma:unique_solution}, we have $\bar{\bx}^* = \bx^*$. The proof is completed.
\end{proof}

As we know, the strategies  \ref{stra:non_negative} and \ref{stra:infinite_often} can be simultaneously satisfied. To illustrate this, we give an intuitive example in \cref{fig:stra1_2}.  
\begin{figure}[htbp]
    \centering
    \includegraphics[width=0.49\textwidth]{./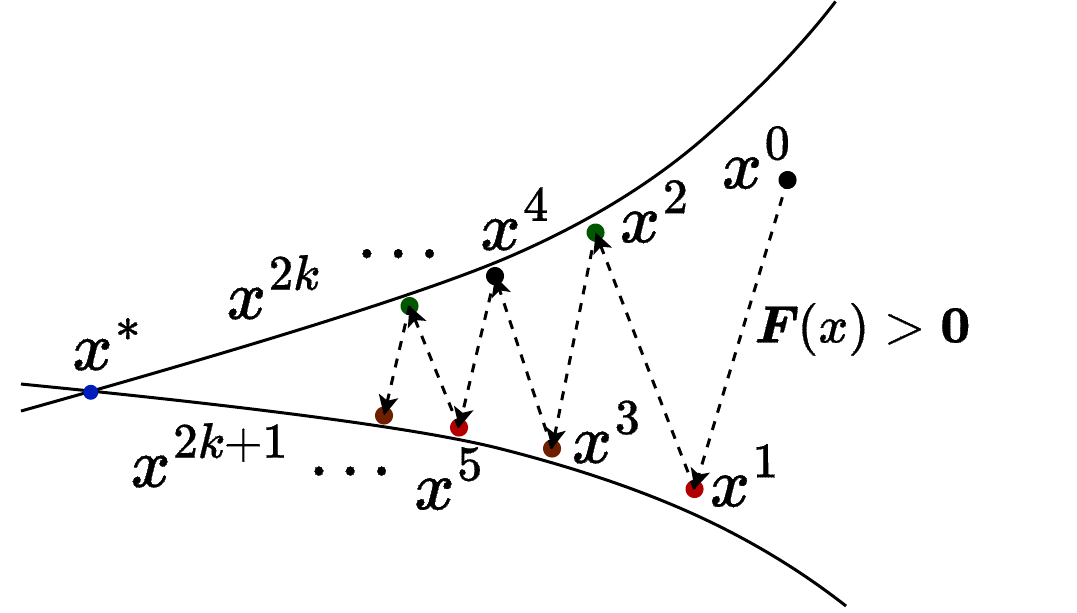}
        \vspace{3mm}
    \caption{The figure depicts an example for the iterated procedure of the NKM satisfying strategies \ref{stra:non_negative} and \ref{stra:infinite_often}. } 
    \label{fig:stra1_2}
\end{figure}

Furthermore, we study a generalized maximum residual strategy as follows.
\begin{strategy}\label{stra:theta_res}
For given $\theta \in (0, 1/\sqrt{J}]$, select an index $j_k$ such that
\begin{equation}\label{eq:theta_res}
    |F_{j_{k}}(\bx^k)| \ge \theta \left\|\bF(\bx^k) \right\|. 
\end{equation}
\end{strategy}
\begin{remark}\label{rem:maximum_residual}
Note that the maximum residual strategy \cite{non_kac_Mcco_77} fulfills \cref{stra:theta_res} for $\theta = 1/\sqrt{J}$, whose index is selected as
\begin{align}\label{eq:kacz_max_res}
   j_k^{\text{maxres}} := \argmax\limits_{1\le j\le J} |F_j(\bx^{k}) - F_j(\bx^*)|.
\end{align}
\end{remark}

Importantly, we can prove the linear convergence of the NKM using \cref{stra:theta_res}.
\begin{theorem}\label{thm:nkm_converge_rate} 
    Let $\{\bx^{k}\}$ be the sequence generated by \cref{alg:nonlinear_kacz} using strategy \ref{stra:theta_res} for $\theta \in (0, 1/\sqrt{J}]$. Suppose that in $\mathcal{B}_{\rho}(\bx^*)$, $\bF(\bx)$ is component-wise convex, $\bF^{\prime}(\bx^*)$ has full column rank and the \ref{ass:relative_grad_discrepancy} holds for $\gamma \in [0, 1)$ such that
    \begin{equation}\label{eq:condi_gamma_le}
        \gamma\kappa_F(\bF^{\prime}(\bx^*)) \le \frac{\theta(1-\tau)}{ 2+\theta(1-\tau)}
    \end{equation}
    for $0<\tau<1$.
    If the initial point $\bx^0\in\mathcal{B}_{\rho}(\bx^*)$, then the sequence converges to the unique solution $\bx^*$ at least with a linear rate. 
\end{theorem}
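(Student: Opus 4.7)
The plan is to establish linear contraction of $\|\bx^k-\bx^*\|^2$ by combining the descent estimate from \cref{lem:dist_decrease} with an $\|\bx^k-\bx^*\|$-dependent lower bound on $|F_{j_k}(\bx^k)|^2/\|\nabla F_{j_k}(\bx^k)\|^2$. First I will verify inductively that $\bx^k\in\mathcal{B}_{\rho}(\bx^*)$ for every $k\ge 0$. Given $\bx^0\in\mathcal{B}_{\rho}(\bx^*)$, I will invoke \cref{lem:dist_decrease}\,---\,noting that component-wise convexity together with \cref{lem:NKM_convex} makes the positivity requirement of \cref{stra:non_negative} compatible with the index picked by \cref{stra:theta_res}\,---\,to deduce $\|\bx^{k+1}-\bx^*\|\le\|\bx^k-\bx^*\|<\rho$ together with the one-step descent
\begin{equation*}
\|\bx^{k+1}-\bx^*\|^2 \le \|\bx^k-\bx^*\|^2 - \frac{|F_{j_k}(\bx^k)|^2}{\|\nabla F_{j_k}(\bx^k)\|^2}.
\end{equation*}

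Second, I will lower bound the residual ratio by $C\,\|\bx^k-\bx^*\|^2$ for an explicit $C>0$. From \cref{stra:theta_res} we have $|F_{j_k}(\bx^k)|\ge\theta\|\bF(\bx^k)\|$. Applying \cref{lemma:RGDC_property} with $\bx_1=\bx^*$, $\bx_2=\bx^k$ and using $\bF(\bx^*)=\bzero$ together with the full column rank of $\bF^{\prime}(\bx^*)$, the triangle inequality yields
\begin{equation*}
\|\bF(\bx^k)\|\ge \bigl(1-\gamma\kappa_F(\bF^{\prime}(\bx^*))\bigr)\,\sigma_{\min}(\bF^{\prime}(\bx^*))\,\|\bx^k-\bx^*\|.
\end{equation*}
For the denominator, \eqref{eq:length_bounded} in \cref{lemma:RGDC_bound} gives $\|\nabla F_{j_k}(\bx^k)\|\le(1+\gamma)\|\nabla F_{j_k}(\bx^*)\|\le(1+\gamma)\|\bF^{\prime}(\bx^*)\|_F$, since a single gradient row is dominated by the full Frobenius norm. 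Assembling these estimates produces
\begin{equation*}
\|\bx^{k+1}-\bx^*\|^2\le \left(1-\frac{\theta^2\bigl(1-\gamma\kappa_F(\bF^{\prime}(\bx^*))\bigr)^2}{(1+\gamma)^2\,\kappa_F^2(\bF^{\prime}(\bx^*))}\right)\|\bx^k-\bx^*\|^2.
\end{equation*}

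Third, I will use the hypothesis \eqref{eq:condi_gamma_le}, rearranged as $2\gamma\kappa_F(\bF^{\prime}(\bx^*))\le\theta(1-\tau)\bigl(1-\gamma\kappa_F(\bF^{\prime}(\bx^*))\bigr)$, to conclude that $\gamma\kappa_F(\bF^{\prime}(\bx^*))<1$ (and hence $\gamma<1$), so that the bracketed contraction factor lies strictly in $(0,1)$; iterating this bound delivers the claimed linear rate, and uniqueness of the limit inside $\mathcal{B}_{\rho}(\bx^*)$ follows directly from \cref{lemma:unique_solution}. The main obstacle I anticipate is the careful chaining of RGDC-based estimates\,---\,using \cref{lemma:RGDC_property} to pass from $\|\bF(\bx^k)\|$ to $\|\bx^k-\bx^*\|$ and \cref{lemma:RGDC_bound} to uniformly control the gradient\,---\,into a single contraction factor, and exploiting the precise form of \eqref{eq:condi_gamma_le} to keep this factor bounded away from $1$ in a $\tau$-dependent manner. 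A subsidiary but essential point is verifying that the index produced by \cref{stra:theta_res} satisfies $F_{j_k}(\bx^k)>0$, so that \cref{lem:dist_decrease} genuinely applies at every iteration.
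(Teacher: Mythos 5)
There is a genuine gap, and it sits exactly where you flagged a ``subsidiary but essential point'': the claim that the index produced by \cref{stra:theta_res} satisfies $F_{j_k}(\bx^k)>0$. This is false in general. \cref{lem:NKM_convex} only guarantees $F_{j_k}(\bx^{k+1})\ge 0$ for the \emph{single} component $j_k$ that was just projected; the other components $F_j(\bx^{k+1})$, $j\neq j_k$, can be (and typically are) negative, as can every component at the initial point $\bx^0$. Since \cref{stra:theta_res} selects $j_k$ by the size of $|F_{j_k}(\bx^k)|$, it may well select an index with $F_{j_k}(\bx^k)<0$ --- indeed the maximum residual strategy of \cref{rem:maximum_residual} does exactly this whenever the largest residual in magnitude is negative. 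For such an index, \cref{lem:dist_decrease} does not apply: its proof relies on multiplying the convexity inequality $F_{j_k}(\bx^k)-F_{j_k}(\bx^*)-\nabla F_{j_k}(\bx^k)^{\tra}(\bx^k-\bx^*)\le 0$ by a \emph{positive} $F_{j_k}(\bx^k)$, and the sign flips when $F_{j_k}(\bx^k)<0$. Convexity alone no longer forces the step to move toward $\bx^*$.

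The paper's proof therefore splits into two cases. Your argument reproduces Case~1 ($F_{j_k}(\bx^k)\ge 0$) essentially verbatim, including the contraction factor $1-\theta^2\bigl(1-\gamma\kappa_F(\bF^{\prime}(\bx^*))\bigr)^2/\bigl((1+\gamma)^2\kappa_F^2(\bF^{\prime}(\bx^*))\bigr)$. What is missing is Case~2 ($F_{j_k}(\bx^k)<0$), which is the real content of the theorem: one decomposes $\nabla F_{j_k}(\bx^k)^{\tra}(\bx^k-\bx^*)$ into $[\nabla F_{j_k}(\bx^k)-\nabla F_{j_k}(\bx^*)]^{\tra}(\bx^k-\bx^*)$ (controlled by the \ref{ass:relative_grad_discrepancy} and shown to be small relative to the residual via \cref{eq:condi_gamma_le} and \cref{eq:res_esti_dist_xk}) plus $\nabla F_{j_k}(\bx^*)^{\tra}(\bx^k-\bx^*)$ (shown by convexity to be negative and to dominate $|F_{j_k}(\bx^k)-F_{j_k}(\bx^*)|$), yielding $\nabla F_{j_k}(\bx^k)^{\tra}(\bx^k-\bx^*)<0$ and $(1+\tau)|F_{j_k}(\bx^k)-F_{j_k}(\bx^*)|\le 2|\nabla F_{j_k}(\bx^k)^{\tra}(\bx^k-\bx^*)|$; these two statements give both the monotone decrease and a (slightly worse, $\tau$-weighted) contraction factor in this case. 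This also explains why your use of the hypothesis \cref{eq:condi_gamma_le} is essentially vacuous --- you only extract $\gamma\kappa_F(\bF^{\prime}(\bx^*))<1$ from it --- whereas the precise constant $\theta(1-\tau)/(2+\theta(1-\tau))$ and the parameter $\tau$ exist precisely to make the negative-residual case work.
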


\begin{proof}
We first prove the following monotonicity result when $\bx^k\in\mathcal{B}_{\rho}(\bx^*)$,
\begin{equation*}
    \| \bx^{k+1} -\bx^* \|\le \| \bx^{k}-\bx^* \|.
\end{equation*}
Depending on the sign of $F_{j_k}(\bx^{k})$, there are two cases.
\paragraph{Case 1: $F_{j_k}(\bx^{k})\ge 0$.}   
By \cref{lem:dist_decrease}, the monotonicity result holds.
\paragraph{Case 2: $F_{j_k}(\bx^{k})< 0$.} 
By the derivation in \cref{lem:dist_decrease}, we have
\begin{multline*}
\| \bx^{k+1} -\bx^* \|^2- \| \bx^{k}-\bx^* \|^2 \\
= \frac{ F_{j_k} (\bx^{k} )}{ \|\nabla F_{j_k} (\bx^{k} ) \|^2} \Big\{ F_{j_k} (\bx^{k}) -F_{j_k} (\bx^*)-2\nabla F_{j_k} (\bx^{k} )^{\tra} (\bx^{k}-\bx^* )\Big\}.
\end{multline*}
In what follows, we prove the following two statements 
\begin{itemize}
    \item[(i)] $\nabla F_{j_k} (\bx^{k} )^{\tra} (\bx^{k}-\bx^* )<0$;
    \item[(ii)] $(1 + \tau)|F_{j_k} (\bx^{k}) -F_{j_k} (\bx^*)| \le 2|\nabla F_{j_k} (\bx^{k} )^{\tra} (\bx^{k}-\bx^* )|$.
\end{itemize}
Based on the two statements, combining with the negativity of $F_{j_k}(\bx^{k})$,
the monotonicity result can be established. 

To confirm the above statements, we split $\nabla F_{j_k} (\bx^{k} )^{\tra} (\bx^{k}-\bx^* )$ into two parts as 
\begin{multline*}
    \nabla F_{j_k} (\bx^{k} )^{\tra} (\bx^{k}-\bx^* )\\ =\underbrace{\left[\nabla F_{j_k} (\bx^{k} ) - \nabla F_{j_k} (\bx^* )\right]^{\tra} (\bx^{k}-\bx^* )}_{\textrm{Part}\ 1}  + \underbrace{\nabla F_{j_k} (\bx^* )^{\tra} (\bx^{k}-\bx^* )}_{\textrm{Part}\ 2}.
\end{multline*}
Using the convexity, Cauchy--Schwarz inequality and the \ref{ass:relative_grad_discrepancy} imply that
\begin{equation}\label{eq:estimate_part1}
    0\le \textrm{Part}\ 1 \le \|\nabla F_{j_k} (\bx^{k} ) - \nabla F_{j_k} (\bx^* )\| \|\bx^{k}-\bx^*\| \le \gamma 
    \|\nabla F_{j_k} (\bx^* )\| \|\bx^{k}-\bx^*\|.
\end{equation}
On the other hand, again by the convexity, we have
\begin{align*}
    F_{j_k} (\bx^{k}) -F_{j_k} (\bx^*)-\nabla F_{j_k} (\bx^* )^{\tra} (\bx^{k}-\bx^* )\ge 0.
\end{align*}
Recall that $F_{j_k} (\bx^{k}) -F_{j_k} (\bx^*)<0$, thus the following results hold 
\begin{equation}\label{eq:sign_part2}
    \textrm{Part}\ 2 = \nabla F_{j_k} (\bx^* )^{\tra} (\bx^{k}-\bx^* )<0,
\end{equation}
and 
\begin{equation}\label{eq:estimate_part2}
|\textrm{Part}\ 2| = |\nabla F_{j_k} (\bx^* )^{\tra} (\bx^{k}-\bx^* )|\ge |F_{j_k} (\bx^{k}) -F_{j_k} (\bx^*)|.
\end{equation}
Otherwise, one can get a contradiction with the above inequalities.

Since $\bF^{\prime}(\bx^*)$ has full column rank, by \cref{lemma:RGDC_property}, for any $\bx\in \mathcal{B}_{\rho}(\bx^*)$, 
\begin{equation*}
    \|\bF(\bx^*) - \bF(\bx)-\bF^{\prime}(\bx^*) (\bx^*-\bx)\| \le \gamma\kappa_F(\bF^{\prime}(\bx^*))\|\bF^{\prime}(\bx^* ) (\bx^*-\bx)\|.
\end{equation*}
Because of $\bx^k\in\mathcal{B}_{\rho}(\bx^*)$, by triangle inequality, we can conclude that
\begin{align*}
    \|\bF(\bx^*) - \bF(\bx^k)\| \ge 
    \left(1-\gamma\kappa_F(\bF^{\prime}(\bx^*))\right) \|\bF^{\prime}(\bx^* ) (\bx^*-\bx^k)\|.
\end{align*}   
Combining with \cref{eq:theta_res} and the above relationship, we obtain
\begin{multline}\label{eq:res_esti_dist_xk}
    |F_{j_k} (\bx^{k}) - F_{j_k} (\bx^*)| \ge \theta\|\bF(\bx^{k}) - \bF(\bx^*)\|  \\  \ge \theta \left(1-\gamma\kappa_F(\bF^{\prime}(\bx^*))\right) \|\bF^{\prime}(\bx^* ) (\bx^{k}-\bx^*)\|  \\
    \ge \theta \left(1-\gamma\kappa_F(\bF^{\prime}(\bx^*))\right)\sigma_{\min}(\bF^{\prime}(\bx^* ) )
    \|\bx^{k}-\bx^*\|.
\end{multline}
The last inequality comes from that $\bF^{\prime}(\bx^*)$ has full column rank.
Note that $\gamma$ satisfies \cref{eq:condi_gamma_le}, then $\gamma\kappa_F(\bF^{\prime}(\bx^*))<1$. Thus, \cref{eq:res_esti_dist_xk} is meaningful. More precisely, we can rewrite \cref{eq:condi_gamma_le} as 
\begin{equation*}
    \gamma \kappa_F(\bF^{\prime}(\bx^*))\le \frac{\theta(1-\tau)}{2}
    \left(1-\gamma\kappa_F(\bF^{\prime}(\bx^*))\right).
\end{equation*}
Hence, using \cref{eq:estimate_part1}, the property of Frobenius norm, the above inequality, \cref{eq:res_esti_dist_xk} and \cref{eq:estimate_part2} in order, we have
\begin{multline*}
    |\textrm{Part}\ 1| \le \gamma 
    \|\nabla F_{j_k} (\bx^* )\| \|\bx^{k}-\bx^*\|
    \le \gamma \|\bF^{\prime} (\bx^* )\|_F \|\bx^{k}-\bx^*\| \\
    \le \frac{\theta(1-\tau)}{2}
    \left(1-\gamma\kappa_F(\bF^{\prime}(\bx^*))\right)\sigma_{\min}(\bF^{\prime}(\bx^* ) ) \|\bx^{k}-\bx^*\| \\
    \le \frac{1-\tau}{2} |F_{j_k} (\bx^{k}) -F_{j_k} (\bx^*)|
    \le \frac{1-\tau}{2} |\textrm{Part}\ 2|.
\end{multline*}

Using the fact that $\textrm{Part}\ 1\ge0$ (see \cref{eq:estimate_part1}) and $\textrm{Part}\ 2<0$ (see \cref{eq:sign_part2}), we obtain 
\begin{equation*}
    \nabla F_{j_k} (\bx^{k} )^{\tra} (\bx^{k}-\bx^* ) = |\textrm{Part}\ 1| - |\textrm{Part}\ 2|
    \le -\frac{1+\tau}{2} |\textrm{Part}\ 2|<0.
\end{equation*}
Consequently, by \cref{eq:estimate_part2}, it follows that
\begin{equation*}
    |\nabla F_{j_k} (\bx^{k} )^{\tra} (\bx^{k}-\bx^* )|
    \ge \frac{1+\tau}{2} |\textrm{Part}\ 2|\ge \frac{1+\tau}{2} |F_{j_k} (\bx^{k}) -F_{j_k} (\bx^*)|.
\end{equation*}
The above two inequalities establish the two statements, respectively.

Since $\bx^0\in\mathcal{B}_{\rho}(\bx^*)$, the monotonic decrease of the distance between $\bx^k$ and $\bx^*$ holds for $k\ge0$. Thus $\bx^k$ is always in $\mathcal{B}_{\rho}(\bx^*)$.
Then, by \cref{lemma:RGDC_bound},
\begin{equation*}
    \| \bF^{\prime}(\bx^k)\|_F^2 = \sum_{j=1}^J \|\nabla F_j(\bx^k)\|^2  \le \sum_{j=1}^J (1+\gamma)^2\|\nabla F_j(\bx^*)\|^2  =  (1+\gamma)^2\|\bF^{\prime}(\bx^*)\|_F^2.
\end{equation*}
Hence, using \cref{eq:theta_res}, the equivalence of matrix norm, the above inequality and \cref{lemma:RGDC_property} in order, we obtain
\begin{multline*}
    \frac{|F_{j_k}(\bx^{k})-F_{j_k}(\bx^*)|^2  }{\|\nabla F_{j_k}(\bx^{k})\|^2}
    \ge \frac{\theta^2\|\bF(\bx^k)-\bF(\bx^*)\|^2}{\|\nabla F_{j_k}(\bx^{k})\|^2}\\
    \ge \frac{\theta^2\|\bF(\bx^k)-\bF(\bx^*)\|^2}{\|\bF^{\prime}(\bx^{k})\|_F^2}
    \ge \frac{\theta^2\left(1-\gamma\kappa_F(\bF^{\prime}(\bx^*))\right)^2 \|\bF^{\prime}(\bx^* ) (\bx^k-\bx^*)\|^2}{(1+\gamma)^2\|\bF^{\prime}(\bx^*)\|_F^2} \\
    \ge \frac{\theta^2\left(1-\gamma\kappa_F(\bF^{\prime}(\bx^*))\right)^2 \sigma_{\min}(\bF^{\prime}(\bx^*))^2 \|\bx^k-\bx^*\|^2}{(1+\gamma)^2\|\bF^{\prime}(\bx^*)\|_F^2} \\
    = \frac{\theta^2\left(1-\gamma\kappa_F(\bF^{\prime}(\bx^*))\right)^2  \|\bx^k-\bx^*\|^2}{(1+\gamma)^2 \kappa_F^2(\bF^{\prime}(\bx^*))}.
\end{multline*}
When $F_{j_k}(\bx^{k})>0$, by \cref{eq:decrease_dist_ge0} and the above estimation, we have 
\begin{align*}
    \frac{\|\bx^{k+1} -\bx^*\|^2}{\|\bx^{k} -\bx^*\|^2}  \le 1 - \frac{|F_{j_k}(\bx^{k})-F_{j_k}(\bx^*)|^2  }{\|\nabla F_{j_k}(\bx^{k})\|^2 \|\bx^{k} -\bx^*\|^2} 
    \le 1- \frac{\theta^2\left(1-\gamma\kappa_F(\bF^{\prime}(\bx^*))\right)^2 }{(1+\gamma)^2 \kappa_F^2(\bF^{\prime}(\bx^*))} < 1.
\end{align*}
For the other case of $F_{j_k}(\bx^{k})<0$, by the proven two statements, we have
\begin{align*}
    \frac{\|\bx^{k+1} -\bx^*\|^2}{\|\bx^{k} -\bx^*\|^2} \le 1 - \frac{\tau|F_{j_k}(\bx^{k})-F_{j_k}(\bx^*)|^2  }{\|\nabla F_{j_k}(\bx^{k})\|^2 \|\bx^{k} -\bx^*\|^2}
    \le 1- \frac{\tau\theta^2\left(1-\gamma\kappa_F(\bF^{\prime}(\bx^*))\right)^2 }{(1+\gamma)^2 \kappa_F^2(\bF^{\prime}(\bx^*))} < 1,
\end{align*}
which completes the proof.
\end{proof}

Let us rethink the above proof from a geometric perspective. When $F_{j_k}(\bx^{k})>0$,
using the convexity of $F_{j_k}$, we immediately conclude that 
\begin{equation*}
    \langle\bx^{k+1}-\bx^{k}, \bx^*-\bx^{k} \rangle  = \frac{F_{j_k}(\bx^{k})}{\|\nabla F_{j_k}(\bx^{k})\|^2} \nabla F_{j_k} (\bx^{k} )^{\tra} (\bx^{k}-\bx^* ) \ge \frac{F_{j_k}(\bx^{k})^2}{\|\nabla F_{j_k}(\bx^{k})\|^2}\ge 0.
\end{equation*}
Note that the nonnegativity of the above formula is necessary to ensure that $\bx^{k+1}$ is a better approximation to the solution $\bx^*$ than $\bx^k$, as shown in the left subfigure of  \cref{fig:2D_diagram}. 

On the other hand, for the case of $F_{j_k}(\bx^{k})<0$, the nonnegativity of the above formula cannot be determined only by the convexity. Fortunately, with the additional RGDC, we can prove statement (i) then further obtain that nonnegativity, as shown in the right subfigure of \cref{fig:2D_diagram}.
\begin{figure}[htbp]
    \centering
    \includegraphics[width=0.49\textwidth]{./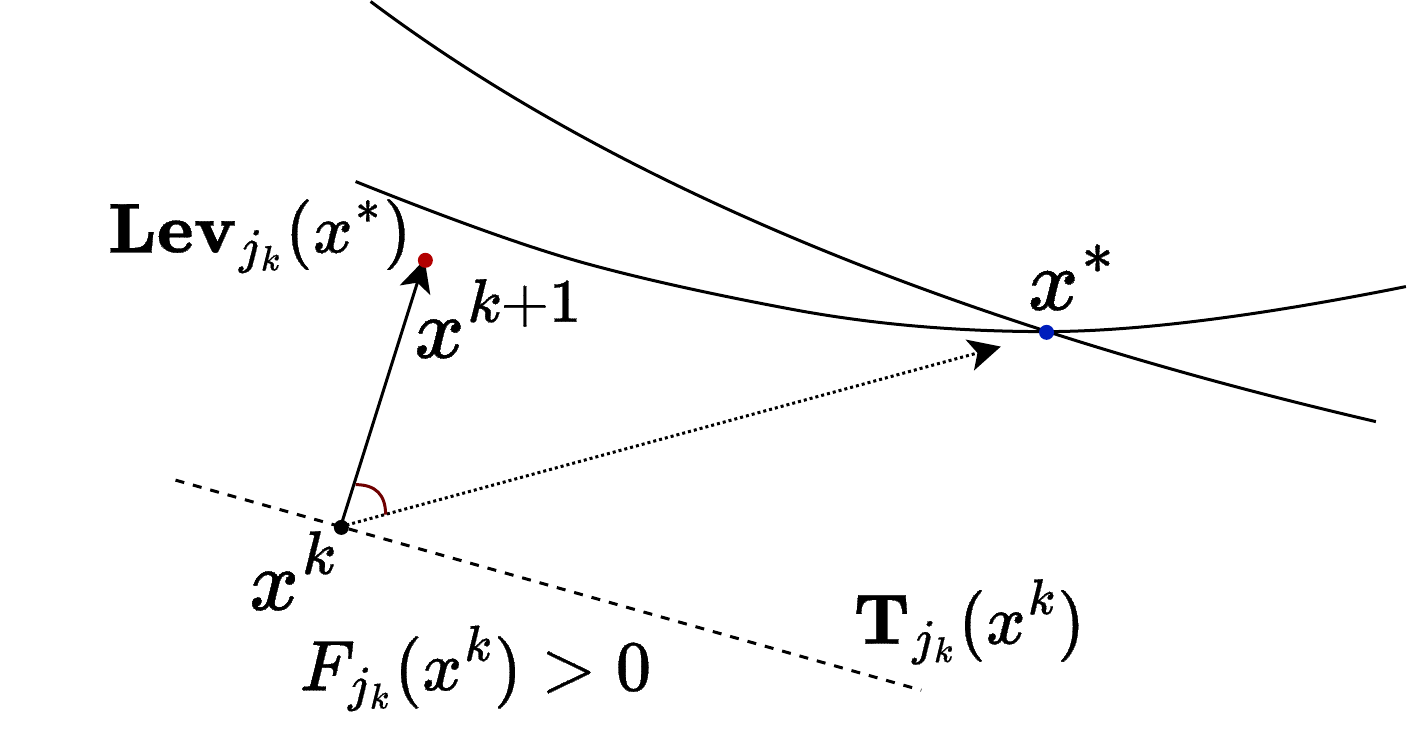}
    \includegraphics[width=0.49\textwidth]{./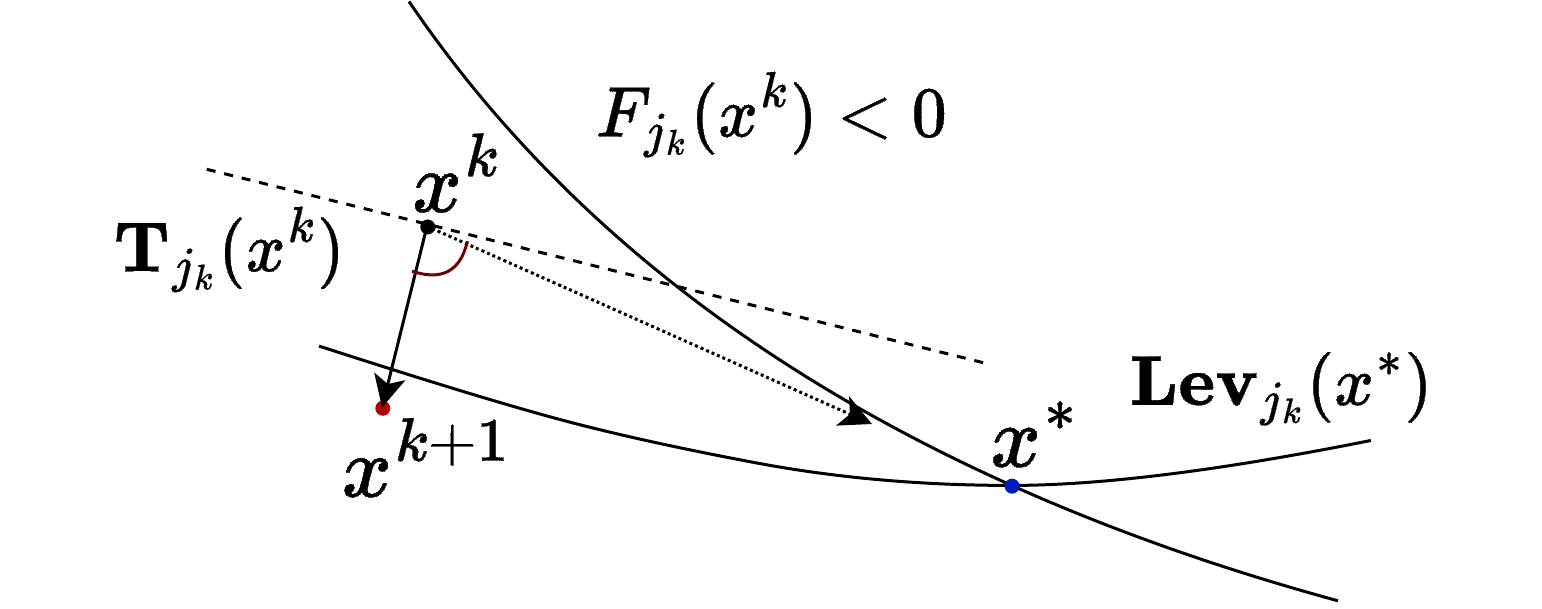}
        \vspace{3mm}
    \caption{The figure depicts the two cases for the sign of $F_{j_k}(\bx^k)$ in the NKM iteration \cref{eq:non_kacz}, as analyzed in \cref{thm:nkm_converge_rate}.}
    \label{fig:2D_diagram}
\end{figure}

\section{Convergence of the NKM for MSCT reconstruction}
\label{sec:NKM_conver_MSCT}
In this section, we specifically investigate the convergence of the NKM when it is applied to solve the nonlinear system in MSCT reconstruction. 

\subsection{Discrete-to-discrete (DD)-data model of MSCT}
The MSCT, for example, dual-energy CT (DECT), represents an emerging field of clinical CT imaging that enables the differentiation of materials with different effective atomic numbers \cite{alma76,DE_MECT15,chpan17}. The MSCT aims to derive the spatial distribution of linear attenuation coefficients, also known as a virtual monochromatic image (VMI), at a particular X-ray energy level within the imaged subject. Typically, a VMI is expressed as a linear combination of basis images, each with known decomposition coefficients. Consequently, reconstructing a VMI is equivalent to reconstructing the basis images.

In practice, the data are measured from scanning an object only by discrete rays and energies, and an image is reconstructed generally on an array of discrete pixels/voxels. Hence, the measurement without other physical factors can be modeled as the formula below 
\begin{equation}\label{eq:dd_formula}
g_j^{[p]}  = \ln\sum_{m=1}^{M} s^{[p]}_{m} \exp\left(-\sum_{d=1}^D b_{dm} z^{[p]}_{j d}
\right) \quad \text{for $1 \le j \le J_p$, $1\le p\le P$,}
\end{equation}
where $\sum_{m=1}^{M} s^{[p]}_{m} =1$, and 
\begin{equation}\label{eq:linear_part}
z^{[p]}_{j d} = \sum_{i=1}^I a^{[p]}_{ji} f_{di} = {\bda_j^{[p]}}^{\tra} \bdf_{\!d}.
\end{equation}
Here $\bda_j^{[p]} = [a^{[p]}_{j1},\ldots,a^{[p]}_{jI}]^{\tra}$, $\bdf_{\!d} = [f_{d1},\ldots,f_{dI}]^{\tra}$.
For clarity, the physical meaning of the used notation is listed in \cref{tab:notation}. Note that these physical terms except measurement $g_j^{[p]}$ are all nonnegative. The VMI at the $m$-th energy bin can also be discretized accordingly as 
\begin{equation*}
    \boldsymbol{\mu}_m:=\sum_{d=1}^D b_{dm} \bdf_{\!d}.
\end{equation*}

\begin{table}
    \centering
    \caption{The physical meaning for terms in DD-data model \cref{eq:dd_formula}--\cref{eq:linear_part}.}
    \label{tab:notation}
    \begin{tabular}{ll}
        \toprule
        Notation &  Description \\
        \midrule
        $M$ & Number of energy bins \\[1mm]
        $P$ & Number of energy spectra \\[1mm]
        $D$ & Number of basis decompositions \\[1mm]
        $I$ & Size of discrete basis image array in a concatenated form \\[1mm]
        $J_p$ & Total number of rays with the $p$-th energy spectrum \\[1mm]
        $g_j^{[p]}$ & Measured data along the $j$-th ray with the $p$-th energy spectrum\\[1mm]
        $s^{[p]}_m$ & The $p$-th discrete normalized spectrum at energy bin $m$  \\[1mm]
        $b_{dm}$ & Decomposition coefficient at energy bin $m$ of the $d$-th basis  \\[1mm]
        $z_{jd}^{[p]}$ & Basis sinogram of ray $j$ with the $p$-th spectrum for basis image $d$  \\[1mm]
        $a_{ji}^{[p]}$ & Intersection length of ray $j$ with the $p$-th spectrum for voxel $i$ \\[1mm]
        $f_{di}$ & Voxel $i$ of the $d$-th basis image \\[1mm]
        $\bda_j^{[p]}$ & Discrete X-ray transform along ray $j$ with the $p$-th spectrum \\[1mm]
        $\bdf_{\!d}$ & Discrete array of the $d$-th basis image \\[1mm]
        \bottomrule
    \end{tabular}
\end{table}

Due to the different data acquisition methods, the configuration of the scanning geometry in MSCT can be generally divided into two types, including geometrically-consistent and geometrically-inconsistent. Thus, the image reconstruction problem in MSCT can be transformed into two types of nonlinear systems according to the above two different configurations. In what follows, we use the theoretical results in \cref{sec:conver_analy} to analyze the convergence of the NKM when solving these two nonlinear systems.

\subsection{Geometrically-consistent MSCT}\label{subsec:geo_con_analysis}

Here we prove a global convergence of the NKM when it is applied to solve the nonlinear system in geometrically-consistent MSCT reconstruction.

In geometrically-consistent MSCT, the configurations of the scanning geometries are consistent for all spectra  \cite{Zou08,Bal_2020,gao23solution}. This consistency corresponds to that $a_{ji}^{[p]}$ and $J_p$ are independent of spectra, \ie, are identical for all $1\le p\le P$, and thus can be denoted as $a_{ji}$ and $J$, respectively. This allows us to rewrite \cref{eq:dd_formula} and \cref{eq:linear_part} as 
\begin{equation}\label{eq:dd_formula_GC}
    g_j^{[p]}  = \ln\sum_{m=1}^{M} s^{[p]}_{m} \exp\left(-\sum_{d=1}^D b_{dm} z_{j d}
    \right) \quad \text{for $1 \le j \le J$, } 1\le p\le P,
\end{equation}
and 
\begin{equation}\label{eq:linear_part_GC}
    z_{j d} = \sum_{i=1}^I a_{ji} f_{di} = {\bda_j}^{\tra} \bdf_{\!d}.
\end{equation}
Moreover, the measured data and basis sinograms from $P$ different spectra can be organized as the following $J$ independent nonlinear systems
\begin{equation}\label{eq:GC_nonsys}
    \bH(\bz_j) - \bdg_j = \bzero \quad\text{for}\ j =1,\ldots, J,
\end{equation}
where $\bdg_j= [g_{j}^{[1]}, \ldots, g_j^{[P]}]^{\tra}$ and $\bz_j=[z_{j1}, \ldots, z_{jD}]^{\tra}$ denote the corresponding measured data and basis sinograms along ray $j$, respectively, $\bH$ is defined as $\bH(\bz) = [H_1(\bz), \ldots, H_{P}(\bz)]^{\tra}$ and 
\begin{align}\label{eq:H_def}
    H_p(\bz) := \ln\sum_{m=1}^{M} s_{m}^{[p]} \exp\left(-\sum_{d=1}^D b_{dm} z_{d}\right).
\end{align}
In the widely used two-step data-domain-decomposition (DDD) method, step (1) solves numerous small-scale nonlinear systems in \cref{eq:GC_nonsys} to obtain
the basis sinograms $\{\bz_j\}$, and step (2) inverts a linear system \cref{eq:linear_part_GC} to reconstruct the basis images $\{\bdf_{\!d}\}$ from the obtained basis sinograms.

We re-express the small-scale nonlinear system \cref{eq:GC_nonsys} in the form of \cref{eq:nonlinear_system} as
\begin{equation}\label{eq:nonsys_GC_nkm}
    \bH(\bz) - \bdg = \bzero \Longleftrightarrow
    H_p(\bz) - g^{[p]} = 0 \quad \text{for}\ p = 1,\ldots,P.
\end{equation}
The NKM for solving \cref{eq:nonsys_GC_nkm} is presented in \cref{alg:nkm_GC}.
Using the auxiliary results in \cref{subsec:verify_convex,subsec:verify_RGDC}, together with the previous result in \cref{thm:nkm_converge_rate}, we immediately obtain a global convergence of the NKM for solving \cref{eq:nonsys_GC_nkm}.

\begin{algorithm}[h]
    \caption{The NKM for solving \cref{eq:nonsys_GC_nkm}}
    \label{alg:nkm_GC}
    \begin{algorithmic}[1]
   \STATE \emph{Initialization}: Given $S$, $B$, the data $\bdg$, and the initial point $\bz^0$. Let $k\gets 0$.
    \STATE \emph{Loop:}
    \STATE \quad Select an index $p_k\in\{1,\ldots,P\}$ according to some strategy.
    \STATE \quad Compute $H_{p_k}(\bz^k)$ and $\bdw^{[p_k]}(\bz^k)$ by \cref{eq:H_def,eq:def_wx}, respectively.
    \STATE \quad Update $\bz^{k+1}$ by 
    \begin{equation*}
        \bz^{k+1} = \bz^{k} + \frac{H_{p_k}(\bz^k) - g^{[p_k]} }{\|B \bdw^{[p_k]}(\bz^k)\|^2} B \bdw^{[p_k]}(\bz^k).
    \end{equation*}
    \STATE \quad \textbf{If} some given termination condition is satisfied, \textbf{output} $\bz^{k+1}$; 
    \STATE \quad \textbf{otherwise}, let $k \gets k+1$, \textbf{goto} \emph{Loop}.
    \end{algorithmic}
\end{algorithm}

\begin{theorem}\label{thm:kac_GC_converge}
    Let $\{\bz^{k}\}$ be the sequence generated by \cref{alg:nkm_GC} using strategy \ref{stra:theta_res} for $\theta \in (0, 1/\sqrt{J}]$. Suppose that \cref{eq:nonsys_GC_nkm} exists a solution $\bz^*$ and for any index set $\alpha\subseteq \{1,\ldots,M\}$,  the number of its elements $\#\alpha = P=D$ such that 
    \begin{equation}\label{eq:det_same_sign}
        \det(S[\{1,\ldots,P\},\alpha]) 
        \det(B[\{1,\ldots,P\},\alpha]) \ge 0 \ (\text{or}\ \le0),
    \end{equation}
    where $S = (s_{m}^{[p]}) \in \Real^{{P}\times M}, B = (b_{dm}) \in \Real^{D\times M}$. Assume further that the $\gamma_B$ defined in \cref{eq:gamma_general} satisfies 
    \begin{equation*}
        \gamma_B \kappa_F(\bH^{\prime}(\bz^*))\le \frac{\theta(1-\tau)}{ 2+\theta(1-\tau)}
    \end{equation*}
    for $0<\tau<1$. 
    Then for any given initial point, the sequence converges to the unique solution $\bz^*$ as $k\rightarrow\infty$ at least with a linear rate.
\end{theorem}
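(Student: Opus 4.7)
The plan is to view this result as a direct application of \cref{thm:nkm_converge_rate} to the mapping $\bH$ of \cref{eq:H_def}, using the MSCT-specific auxiliary results (the convexity statement of Subsection~4.2.1 associated with the sign condition \cref{eq:det_same_sign}, together with \cref{lem:general_verify_grad,lem:specific_verify_grad}) to promote the \emph{local} hypotheses of \cref{thm:nkm_converge_rate} into \emph{global} ones. That promotion is precisely what removes any restriction on the initial guess $\bz^0$ and yields the global convergence claimed here.

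First I would establish, via the forthcoming convexity lemma of Subsection~4.2.1, that under \cref{eq:det_same_sign} every component $H_p$ is convex on all of $\Real^D$. The log-sum-exp form \cref{eq:H_def} reduces the verification to a sign analysis of the Hessian, and a Cauchy--Binet expansion of that Hessian pairs each subset $\alpha\subseteq\{1,\ldots,M\}$ of size $D$ with the product $\det(S[\{1,\ldots,P\},\alpha])\,\det(B[\{1,\ldots,P\},\alpha])$, whose uniform sign is exactly \cref{eq:det_same_sign}. The same minor decomposition, applied to the Gram matrix $\bH^{\prime}(\bz^*)^{\tra}\bH^{\prime}(\bz^*)$, yields the full column rank of $\bH^{\prime}(\bz^*)$ required by \cref{thm:nkm_converge_rate}.

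Second, I would appeal to \cref{lem:general_verify_grad,lem:specific_verify_grad} to obtain that the \ref{ass:relative_grad_discrepancy} holds for $\bH$ globally on $\Real^D$ with the constant $\gamma_B$ of \cref{eq:gamma_general}. Together with the quantitative assumption $\gamma_B\kappa_F(\bH^{\prime}(\bz^*))\le \theta(1-\tau)/(2+\theta(1-\tau))$ in the statement, every hypothesis of \cref{thm:nkm_converge_rate} is then met on $\mathcal{B}_\rho(\bz^*)$ for \emph{arbitrarily large} $\rho$. Given any initial point $\bz^0$, pick $\rho>\|\bz^0-\bz^*\|$: \cref{thm:nkm_converge_rate} delivers linear convergence of $\{\bz^k\}$ to the unique solution in that ball, and \cref{lemma:unique_solution}, again applied with the global RGDC, promotes this local uniqueness to uniqueness in all of $\Real^D$.

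The main obstacle, which is largely outsourced to the auxiliary lemmas, is to turn the purely algebraic sign condition \cref{eq:det_same_sign} into both global convexity and a global RGDC for the log-sum-exp mapping $\bH$; the RGDC part in particular requires controlling $\nabla H_p(\bz)$ in terms of the probability weights $s_m^{[p]}\exp(-\sum_d b_{dm}z_d)/\sum_{m'} s_{m'}^{[p]}\exp(-\sum_d b_{dm'}z_d)$ uniformly in $\bz$. Once those structural results are in hand, the present theorem is essentially a lifting of the local convergence guarantee of \cref{thm:nkm_converge_rate} to a global one, the lifting being made possible by the global nature of the nonlinearity bounds available in the MSCT setting.
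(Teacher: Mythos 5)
Your overall architecture coincides with the paper's: verify component-wise convexity and a global \ref{ass:relative_grad_discrepancy} for $\bH$, establish full column rank of $\bH^{\prime}(\bz^*)$, and then invoke \cref{thm:nkm_converge_rate} on $\mathcal{B}_{\rho}(\bz^*)$ with $\rho$ arbitrarily large (plus \cref{lemma:unique_solution}) to turn the local statement into a global one. That lifting step, and the appeal to \cref{lem:general_verify_grad} for the global RGDC with constant $\gamma_B$, are exactly what the paper does.

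However, you misassign the role of the determinant condition \cref{eq:det_same_sign}, and the mechanism you propose for convexity would not work. The Hessian of a single component is $\nabla^2 H_p(\bz) = B\bigl(\diag(\bdw^{[p]})-\bdw^{[p]}{\bdw^{[p]}}^{\tra}\bigr)B^{\tra}$; it involves only the $p$-th spectrum, so no Cauchy--Binet expansion of it can produce the $P\times P$ minors $\det(S[\{1,\ldots,P\},\alpha])$ of the full spectra matrix appearing in \cref{eq:det_same_sign}. In the paper, convexity of every $H_p$ is proved in \cref{lem:map_MSCT_convex} using only $s_m^{[p]}\ge 0$ and $b_{dm}>0$ (positive semidefiniteness of $\diag(\bdw)-\bdw\bdw^{\tra}$ via Cauchy--Schwarz), with no reference to \cref{eq:det_same_sign} at all. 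The sign condition is used for a different purpose: since $\bH^{\prime}(\bz)=-WB^{\tra}$ with $W$ the $P\times M$ matrix of weights $\bdw^{[p]}(\bz)^{\tra}$, a Cauchy--Binet expansion of $\det(\bH^{\prime}(\bz))$ (a square matrix, as $P=D$) is a sum over size-$P$ subsets $\alpha$ of terms whose signs are controlled by \cref{eq:det_same_sign}; this is how one concludes $\det(\bH^{\prime}(\bz))\neq 0$ for all $\bz$ (the paper cites Theorem~4.1 of the reference \texttt{gao23solution} for this), which is what gives full column rank of $\bH^{\prime}(\bz^*)$. Your alternative of expanding the Gram matrix is pointed at the right conclusion, but you should detach it from the convexity claim. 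Finally, \cref{lem:specific_verify_grad} is not needed; \cref{lem:general_verify_grad} alone supplies the global RGDC with the $\gamma_B$ of \cref{eq:gamma_general}.
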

\begin{proof}
By the conclusion of theorem 4.1 in \cite{gao23solution}, we know that $\det(\bH^{\prime}(\bz))$ never vanishes for all $\bz\in\Real^D$. Thus, $\bH^{\prime}(\bz^*)$ has full column rank. Using the verifications in \cref{lem:map_MSCT_convex,lem:general_verify_grad}, it follows that $\bH$ satisfies the conditions of \cref{lemma:unique_solution,lem:dist_decrease} in $\mathcal{B}_{\rho}(\bz^*)$ with any radius $\rho$. Hence, by \cref{thm:nkm_converge_rate}, the sequence converges to the unique solution $\bz^*$ in $\Real^D$. Moreover, its convergence rate is at least linear as 
\begin{align*}
    \frac{\|\bz^{k+1} -\bz^*\|}{\|\bz^{k} -\bz^*\|} &\le \sqrt{1- \frac{\tau\theta^2\left(1-\gamma_B\kappa_F(\bH^{\prime}(\bz^*))\right)^2 }{(1+\gamma_B)^2 \kappa_F^2(\bH^{\prime}(\bz^*))}}.
\end{align*}
\end{proof}

\subsection{Geometrically-inconsistent MSCT}\label{subsec:geo_incon_analysis}

In this subsection, we prove a global convergence of the NKM when it is applied to solve the nonlinear system in geometrically-inconsistent MSCT image reconstruction.

For the geometrically-inconsistent case, it is usually necessary to directly solve \cref{eq:dd_formula} to
reconstruct the basis images $\{\bdf_{\!d}\}$. We put all equations in \cref{eq:dd_formula,eq:linear_part} together and express in the form of \cref{eq:nonlinear_system} as
\begin{equation}\label{eq:nonsys_GIC_nkm}
    \bdK(\mathbf{f}) - \mathbf{g} = \bzero \Longleftrightarrow
    K_{j}^{[p]} (\mathbf{f}) - g_{j}^{[p]} = 0  \quad \text{for}\ j = 1, \ldots, J_p, 1\le p\le P,
\end{equation}
where $\bdK(\mathbf{f}) = [K_1^{[1]}(\mathbf{f}),\ldots, K^{[P]}_{J_P}(\mathbf{f})]^{\tra}$, $\mathbf{g} = [g_1^{[1]},\ldots, g_{J^{[P]}}^{[P]}]^{\tra}$ and $\mathbf{f}= [\bdf_{\!1}^{\tra},\ldots,\bdf_{\!D}^{\tra}]^{\tra}$,
\begin{equation}\label{eq:log_sum_exp}
K_j^{[p]}(\mathbf{f}) := \ln\sum_{m=1}^{M} s_{m}^{[p]} \exp\Biggl(-\sum_{d=1}^{D} b_{dm}{\bda_j^{[p]}}^{\tra} \bdf_{\!d} \Biggr).
\end{equation}
Let $\bz_j^{[p]} = [{\bda_j^{[p]}}^{\tra} \bdf_{\!1},\ldots,{\bda_j^{[p]}}^{\tra} \bdf_{\!D}]^{\tra}$, then $K_j^{[p]}(\mathbf{f})=H_p(\bz_j^{[p]})$.
Consequently, the 
gradient of $K_j^{[p]}(\mathbf{f})$ is given by 
\begin{equation*}
    \nabla K_j^{[p]}(\mathbf{f}) = \nabla H_p(\bz_j^{[p]}) \otimes\bda_j^{[p]},
\end{equation*}
where $\otimes$ denotes the Kronecker product. We present the NKM for solving \cref{eq:nonsys_GIC_nkm} in \cref{alg:nkm_GIC}. 
\begin{algorithm}[htbp]
    \caption{The NKM for solving \cref{eq:nonsys_GIC_nkm}}
    \label{alg:nkm_GIC}
    \begin{algorithmic}[1]
    \STATE \emph{Initialization}: Given $S,B$, $\{\bda_j^{[p]}\}$, the data $\mathbf{g}$, and the initial point $\mathbf{f}^0$. Let $k\gets 0$. 
    \STATE \emph{Loop:}
    \STATE \quad Select indices $p_k\in\{1,\ldots,P\}$ and $j_k\in\{1,\ldots,J_{p_k}\}$ according to \\
    \quad some strategy.
    \STATE \quad Compute the forward projections $\bz_{j_k}^{[p_k]} = [{\bda_{j_k}^{[p_k]}}^{\tra} \bdf_{\!1}^k,\ldots,{\bda_{j_k}^{[p_k]}}^{\tra} \bdf_{\!D}^k]^{\tra} $, \\ \quad $\bdw^{[p_k]}(\bz_{j_k}^{[p_k]})$ and $K_{j_k}^{[p_k]}(\mathbf{f}^k)$ by \cref{eq:log_sum_exp,eq:def_wx}, respectively. 
    \STATE \quad Update $\mathbf{f}^{k+1}$ by 
    \begin{equation*}
        \mathbf{f}^{k+1} = \mathbf{f}^{k} + \frac{ K_{j_k}^{[p_k]}(\mathbf{f}^k) - g_{j_k}^{[p_k]}}{\|B \bdw^{[p_k]}(\bz_{j_k}^{[p_k]})\|^2 \|\bda_{j_k}^{[p_k]}\|^2} B \bdw^{[p_k]}(\bz_{j_k}^{[p_k]})\otimes\bda_{j_k}^{[p_k]}.
    \end{equation*}
    \STATE \quad \textbf{If} some given termination condition is satisfied, \textbf{output} $\mathbf{f}^{k+1}$; 
    \STATE \quad \textbf{otherwise}, let $k \gets k+1$, \textbf{goto} \emph{Loop}.
    \end{algorithmic}
\end{algorithm}

\begin{theorem}\label{thm:kac_GIC_converge}
    Let $\{\mathbf{f}^k\}$ be the sequence generated by \cref{alg:nkm_GIC} using strategy \ref{stra:theta_res} for $\theta \in (0, 1/\sqrt{J}]$.
    Suppose that \cref{eq:nonsys_GIC_nkm} exists a solution $\mathbf{f}^*$, and $\bdK^{\prime}(\mathbf{f}^*)$ has full column rank, the $\gamma_B$ defined in \cref{eq:gamma_general} satisfies 
    \begin{equation*}
        \gamma_B \kappa_F(\bdK^{\prime}(\mathbf{f}^*))\le \frac{\theta(1-\tau)}{ 2+\theta(1-\tau)}
    \end{equation*}
    for $0<\tau<1$. Then for any given initial point, the sequence converges to the unique solution $\mathbf{f}^*$ as $k\rightarrow\infty$ at least with a linear rate.
\end{theorem}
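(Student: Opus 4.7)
My approach is to reduce Theorem~\ref{thm:kac_GIC_converge} to the general convergence Theorem~\ref{thm:nkm_converge_rate}, exactly as was done for the geometrically-consistent case in Theorem~\ref{thm:kac_GC_converge}. The hypothesis that $\bdK^{\prime}(\mathbf{f}^*)$ has full column rank is given, and $\gamma_B$ is already the global RGDC constant for $\bH$ coming out of Lemma~\ref{lem:general_verify_grad}. Hence only two facts remain to be checked for the componentwise mapping $\bdK$, both in a global sense (so that the radius $\rho$ of the ball may be taken arbitrarily large and any initial point is admissible): componentwise convexity of $\bdK$, and the \ref{ass:relative_grad_discrepancy} with the same constant $\gamma_B$.

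First, I would observe that $K_j^{[p]}(\mathbf{f}) = H_p(\bz_j^{[p]})$, where $\mathbf{f} \mapsto \bz_j^{[p]}$ is linear. So $K_j^{[p]}$ is the composition of the convex function $H_p$ (established by Lemma~\ref{lem:map_MSCT_convex}) with a linear map, and convexity is preserved; therefore $\bdK$ is componentwise convex on $\Real^{DI}$.

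Next, I would verify the RGDC for $\bdK$ via the Kronecker-product form of the gradient, $\nabla K_j^{[p]}(\mathbf{f}) = \nabla H_p(\bz_j^{[p]}) \otimes \bda_j^{[p]}$, together with the norm identity $\|\bdu \otimes \bdv\| = \|\bdu\|\,\|\bdv\|$. For any $\mathbf{f}_1, \mathbf{f}_2 \in \Real^{DI}$, writing $\bz_{j,l}^{[p]}$ for the forward projection associated with $\mathbf{f}_l$, one obtains
\begin{equation*}
\|\nabla K_j^{[p]}(\mathbf{f}_1) - \nabla K_j^{[p]}(\mathbf{f}_2)\| = \|\nabla H_p(\bz_{j,1}^{[p]}) - \nabla H_p(\bz_{j,2}^{[p]})\|\,\|\bda_j^{[p]}\|.
\end{equation*}
The global RGDC for $\bH$ bounds the first factor by $\gamma_B \|\nabla H_p(\bz_{j,1}^{[p]})\|$, and the $\|\bda_j^{[p]}\|$ factor cancels against $\|\nabla K_j^{[p]}(\mathbf{f}_1)\| = \|\nabla H_p(\bz_{j,1}^{[p]})\|\,\|\bda_j^{[p]}\|$, so the RGDC for $\bdK$ holds on all of $\Real^{DI}$ with the same constant $\gamma_B$. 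The degenerate case $\bda_j^{[p]} = \bzero$ forces $\nabla K_j^{[p]} \equiv \bzero$, in which case the condition is trivial by Remark~\ref{remark:ass_grad}.

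With componentwise convexity and the global RGDC in hand, Theorem~\ref{thm:nkm_converge_rate} applies directly to $\bdK$ and yields convergence to the unique solution $\mathbf{f}^*$ from any initial point, together with the linear rate
\begin{equation*}
\frac{\|\mathbf{f}^{k+1}-\mathbf{f}^*\|}{\|\mathbf{f}^k-\mathbf{f}^*\|} \le \sqrt{1 - \frac{\tau\theta^2\bigl(1-\gamma_B\kappa_F(\bdK^{\prime}(\mathbf{f}^*))\bigr)^2}{(1+\gamma_B)^2\,\kappa_F^2(\bdK^{\prime}(\mathbf{f}^*))}}.
\end{equation*}
The only nontrivial step is lifting the RGDC from $\bH$ to $\bdK$, but the Kronecker-product structure makes the lift essentially automatic; I do not anticipate any real obstacle beyond cleanly handling the possibly degenerate $\bda_j^{[p]}$.
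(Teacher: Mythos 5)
Your proposal is correct and follows essentially the same route as the paper: lift componentwise convexity and the \ref{ass:relative_grad_discrepancy} from $\bH$ to $\bdK$ via the Kronecker-product structure of $\nabla K_j^{[p]}$ (the RGDC cancellation of $\|\bda_j^{[p]}\|$ is exactly the paper's computation), then invoke \cref{lemma:unique_solution} and \cref{thm:nkm_converge_rate} globally since the RGDC holds on all of $\Real^{DI}$. The only cosmetic difference is that you obtain convexity of $K_j^{[p]}$ as a composition of the convex $H_p$ with a linear map, whereas the paper argues via nonnegativity of the eigenvalues of the Kronecker product $\nabla^2 H_p(\bz_j^{[p]})\otimes\bigl(\bda_j^{[p]}(\bda_j^{[p]})^{\tra}\bigr)$; both hinge on \cref{lem:map_MSCT_convex}.
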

\begin{proof}
    The Hessian matrix of $K_j^{[p]}(\mathbf{f})$ is given by 
    \begin{equation*}
        \nabla^2 K_j^{[p]}(\mathbf{f}) = \nabla^2  H_p(\bz_j^{[p]}) \otimes \left(\bda_j^{[p]} (\bda_j^{[p]})^{\tra} \right).
    \end{equation*}
    The eigenvalues of the above Kronecker product of two square matrices are given by $\{\lambda_d\cdot\tilde{\lambda}_i\}$ (see \cite{horn_johnson_book91}), where $\lambda_d\in \lambda\left(\nabla^2  H_p(\bz_j^{[p]})\right)$ for $d=1,\ldots D$ and $\tilde{\lambda}_i\in \lambda\left(\bda_j^{[p]} (\bda_j^{[p]})^{\tra} \right)$ for $i=1,\ldots I$.
    Recalling the proof in \cref{lem:map_MSCT_convex}, we know
    $\nabla^2  H_p(\bz)\succeq 0$ for all $\bz$. The rank-one matrix $\bda_j^{[p]} (\bda_j^{[p]})^{\tra}$ is also positive semidefinite.
    Hence, all eigenvalues of the Hessian matrix of $K_j^{[p]}(\mathbf{f})$ are nonnegative and thus $K_j^{[p]}(\mathbf{f})$ is convex, which was also proved by the other method in \cite{gao2021EPD}. Moreover, by the property of Kronecker product, we have
\begin{align*}
    \frac{\|\nabla K_j^{[p]}(\mathbf{f}) - \nabla K_j^{[p]}(\tilde{\mathbf{f}}) \| }{\|\nabla K_j^{[p]}(\mathbf{f})\|} &= \frac{\|\nabla H_p(\bz_j^{[p]}) - \nabla H_p(\tilde{\bz}_j^{[p]})\|\cdot \|\bda_j^{[p]}\|}{\|\nabla H_p(\bz_j^{[p]})\| \cdot \|\bda_j^{[p]}\|} \\
    &= \frac{\|\nabla H_p(\bz_j^{[p]}) - \nabla H_p(\tilde{\bz}_j^{[p]})\| }{\|\nabla H_p(\bz_j^{[p]})\|},
\end{align*}
where $\tilde{\bz}_j^{[p]} = [{\bda_j^{[p]}}^{\tra} \tilde{\bdf}_{\!1},\ldots,{\bda_j^{[p]}}^{\tra} \tilde{\bdf}_{\!D}]^{\tra}$.
Therefore, by \cref{lem:general_verify_grad}, we can also verify that $\bdK$ satisfies the \ref{ass:relative_grad_discrepancy}. Using \cref{lemma:unique_solution,thm:nkm_converge_rate}, we obtain the uniqueness of solution and the linear convergence.
\end{proof}

\section{Numerical experiments}
\label{sec:Numerical}

In this section, we validate the numerical convergence of the NKM when it is applied to solve the two types of nonlinear systems in \cref{eq:nonsys_GC_nkm} (or \cref{eq:GC_nonsys}) and \cref{eq:nonsys_GIC_nkm} for DECT in \cref{sec:NKM_conver_MSCT}. Here both numbers of energy spectra and basis decomposition are equal to 2, \ie, $P=D=2$.

The two tests are both performed on the workstation with Intel(R) Xeon(R) Gold 6140 CPU @2.3GHz$\times2$ and 192GB RAM running Python. The used settings for tests are presented below, which includes energy spectra, decomposition coefficients, truth basis images and discrete X-ray transform. 

    The discrete X-ray spectra $\{s_m^{[p]}\}$ in \cref{eq:nonsys_GC_nkm,eq:nonsys_GIC_nkm} are simulated by an open-source X-ray spectrum simulator, SpectrumGUI \cite{spectrum_GUI}. We employ two spectra pairs displayed in \cref{fig:spectra_pair_contrast}, which are referred to as spectra pair I and spectra pair II, respectively. The spectra pair I contains 80-kV and 140-kV spectrum of the GE maxiray 125 X-ray tube. Additionally, the spectra pair II consists of the same 80-kV spectrum as pair I and a filtered 140-kV spectrum by a copper filter of 1-mm width. 
    \begin{figure}[htbp] 
        \centering 
       \includegraphics[width=0.45\textwidth]{./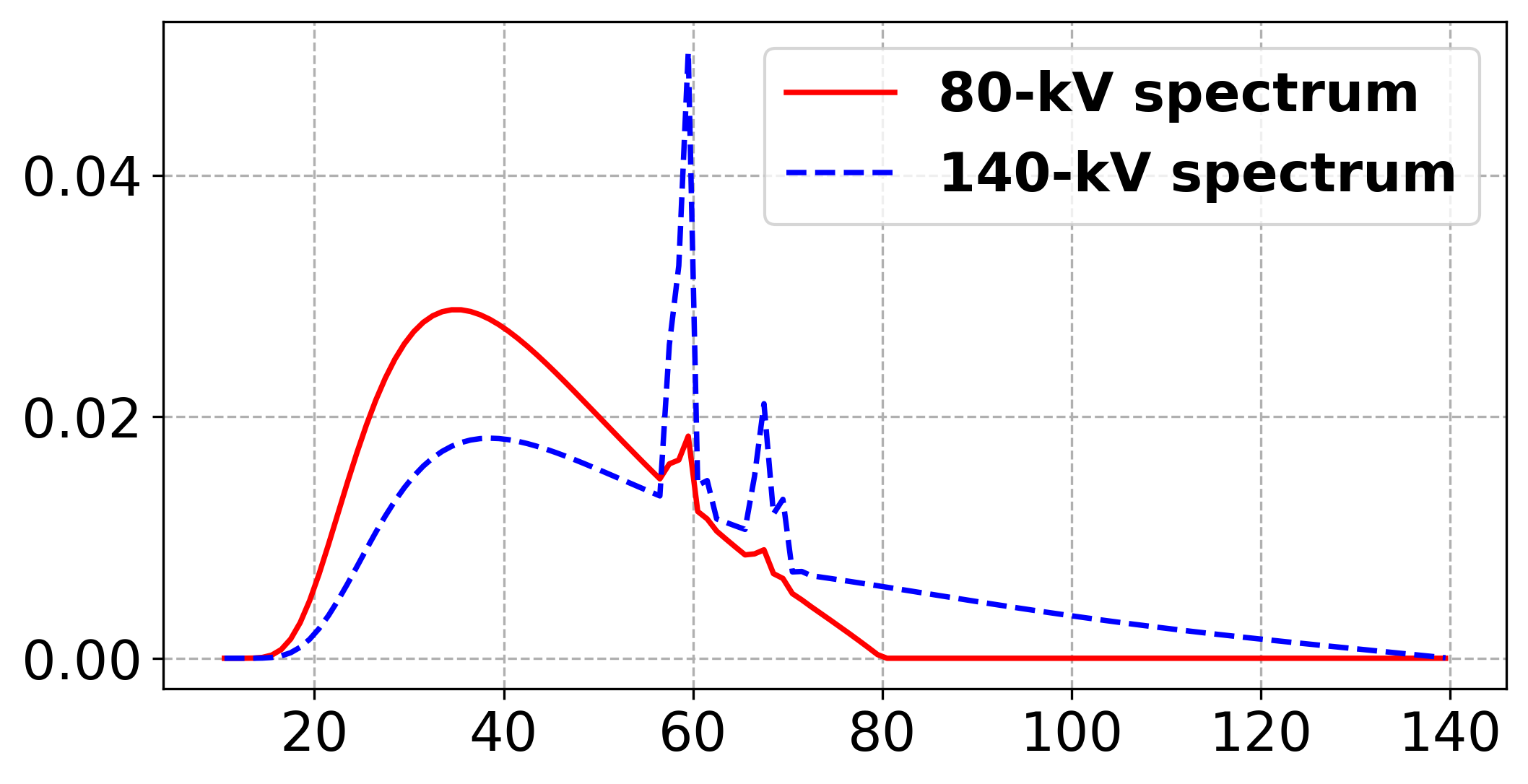}
       \includegraphics[width=0.45\textwidth]{./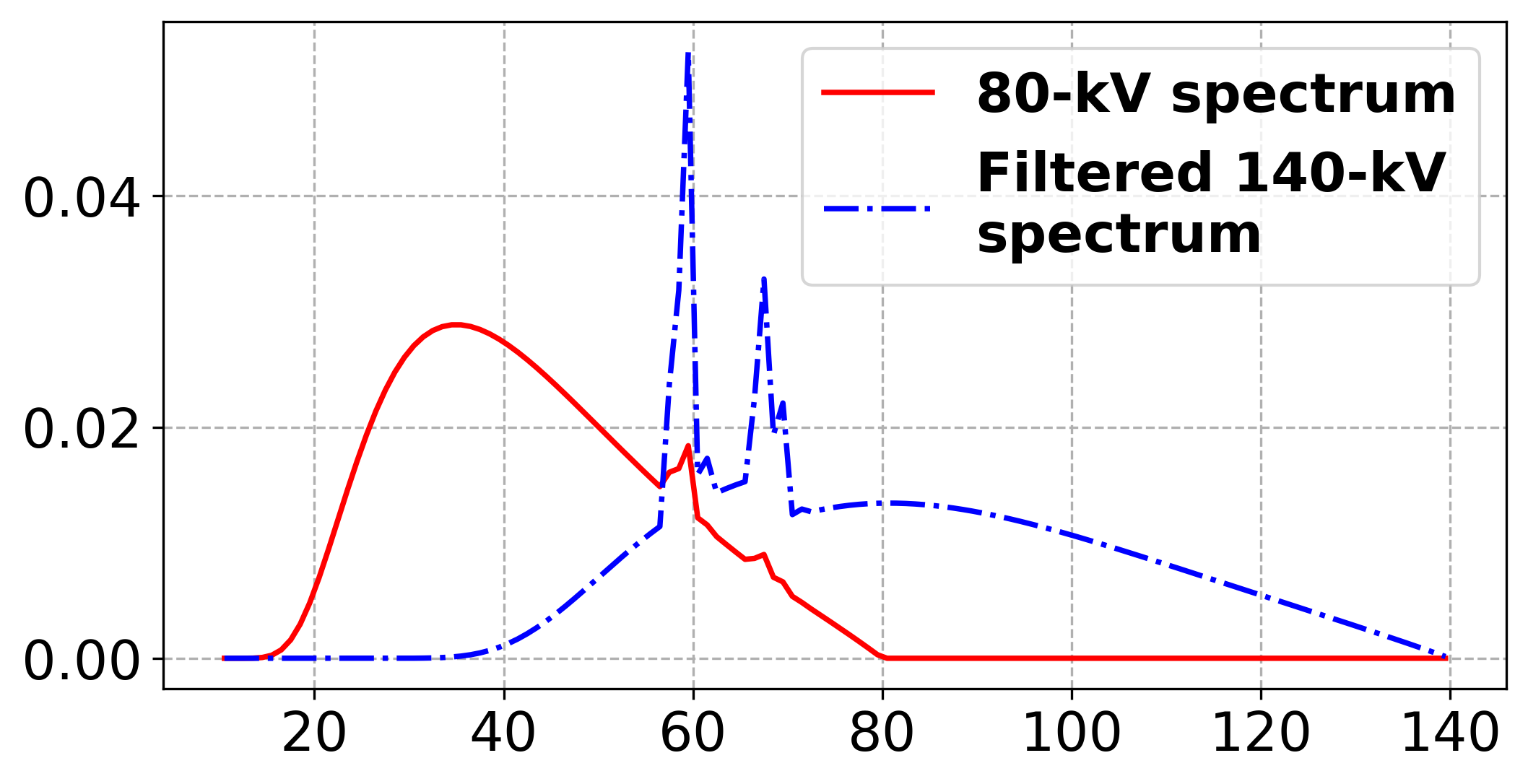} 
           \vspace{3mm}
    \caption{The used spectra pairs. Left: Spectra pair I, which contains 80-kV spectrum (red, solid) and 140-kV spectrum (blue, dashed). Right: Spectra pair II, which contains 80-kV spectrum (red, solid) and filtered 140-kV spectrum (blue, dashed dot).} 
         \label{fig:spectra_pair_contrast} 
\end{figure} 

    We take the mass attenuation coefficients of the used basis materials, namely, water and bone, from \cite{NIST_data} as the decomposition coefficients $\{b_{dm}\}$ in \cref{eq:nonsys_GC_nkm,eq:nonsys_GIC_nkm}. 
    The forbild head phantom is chosen as the truth water and bone basis images which are both discretized as $128\times128$ pixels on square area $[-1,1]\times[-1,1]$ $\text{cm}^2$ and take gray values over $[0, 1]$, as shown in row 1 of \cref{fig:recon_forbild}. The discrete X-ray transform, equivalently, the projection matrices $\{\bda_j^{[p]}\}$ are computed by the algorithm proposed in \cite{chen2020fast}. 

With the above settings, for validating the convergence of the associated NKM, we generate the simulated data without noise by plugging the used energy spectra, decomposition coefficients, discrete X-transform and truth basis images into \cref{eq:nonsys_GC_nkm} and \cref{eq:nonsys_GIC_nkm}. The stability study using noisy data for the NKM is out of the scope of this work. For clarity in the following context, one $\textit{iteration}$ refers to once updating by \cref{eq:non_kacz}, while one $\textit{epoch}$ represents completing a set of iterations equal to the total number of equations in the system.

\subsection{Test 1: Geometrically-consistent DECT}\label{test1:GC_DECT}

In this case, we focus on using the NKM (\cref{alg:nkm_GC}) to solve numerous nonlinear systems as in \cref{eq:GC_nonsys}. 
Note that the system for each ray is independent thus can be solved in parallel. 

We define the following metric of relative error for the reconstruction error of basis sinograms
\begin{align*}
    \text{RE}_{\bz}^k := \sqrt{\frac{\sum_{j=1}^J \|\bz_j^k - \bz_j^*\|^2}{\sum_{j=1}^J \|\bz_j^*\|^2}},
\end{align*}
where $\bz_j^k$ represents the basis sinogram computed for the $j$-th ray path at the $k$-th iteration of \cref{alg:nkm_GC}, and 
$\{\bz_j^*\}$ denotes the truth basis sinograms.

The consistent scanning configuration is employed for low and high energy spectra, which comprises 181 parallel  projections uniformly sampled on $[-1.5,1.5]$ cm at each of 180 views uniformly distributed over $[0,\pi)$. Then the truth basis sinograms can be obtained by plugging the discrete X-ray transform and truth basis images in \cref{eq:linear_part_GC}, as shown in column 1 of \cref{fig:basis_sinogram}. In this test, we use two different spectra pairs I and II as shown in \cref{fig:spectra_pair_contrast} but the same decomposition coefficients to formulate two different nonlinear systems \cref{eq:H_def}, which are referred to as $\bH_1$ and $\bH_2$, respectively. Note that by direct validation, both $\bH_1$ and $\bH_2$ are satisfying the  condition \cref{eq:det_same_sign} in \cref{thm:kac_GC_converge}.

We employ \cref{alg:nkm_GC} using the maximum residual strategy \cref{eq:kacz_max_res} and set zero vector as the initial point for all nonlinear systems.
After $10^4$ epochs ($2\times 10^4$ iterations) with \cref{alg:nkm_GC}, 
the obtained metric curves of RE$_{\bz}^k$ as the function of iteration number are plotted in \cref{fig:metric_linear}.
\begin{figure}[htbp]  
    \centering 
    \includegraphics[width=0.618\textwidth]{./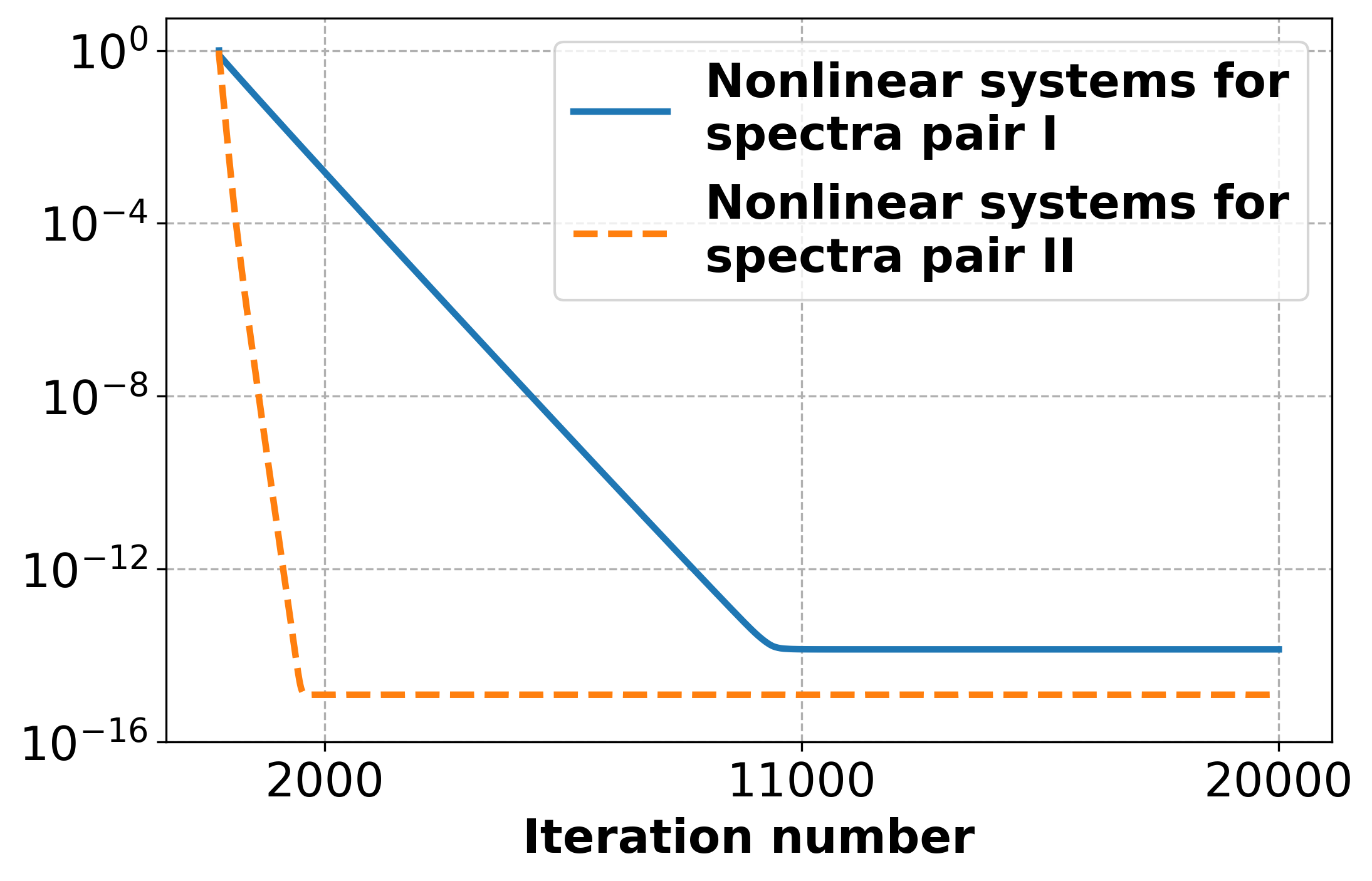}
        \vspace{3mm}
    \caption{Metric RE$_{\bz}^k$ plotted in a semi-log scale as the function of iteration number of the NKM using the maximum residual strategy. The solid and dashed curves are obtained from solving different nonlinear systems for spectral pairs in \cref{fig:spectra_pair_contrast}, respectively.} 
     \label{fig:metric_linear}
\end{figure}
It reveals that the NKM, namely, \cref{alg:nkm_GC} using the maximum residual strategy can accurately solve the nonlinear systems in \cref{eq:GC_nonsys}. The convergence rate of the NKM is linear as shown in \cref{fig:metric_linear}.

Moreover, the faster convergence is also observed in \cref{fig:metric_linear} when solving the nonlinear systems for spectra pair II than I, which is consistent with the theoretical result that the smaller the scaled condition number $\kappa_F(\bH^{\prime}(\bz^*))$, the faster the convergence rate by \cref{thm:kac_GC_converge}. 
\begin{figure}[htbp]
    \centering
    \includegraphics[width=0.95\textwidth]{./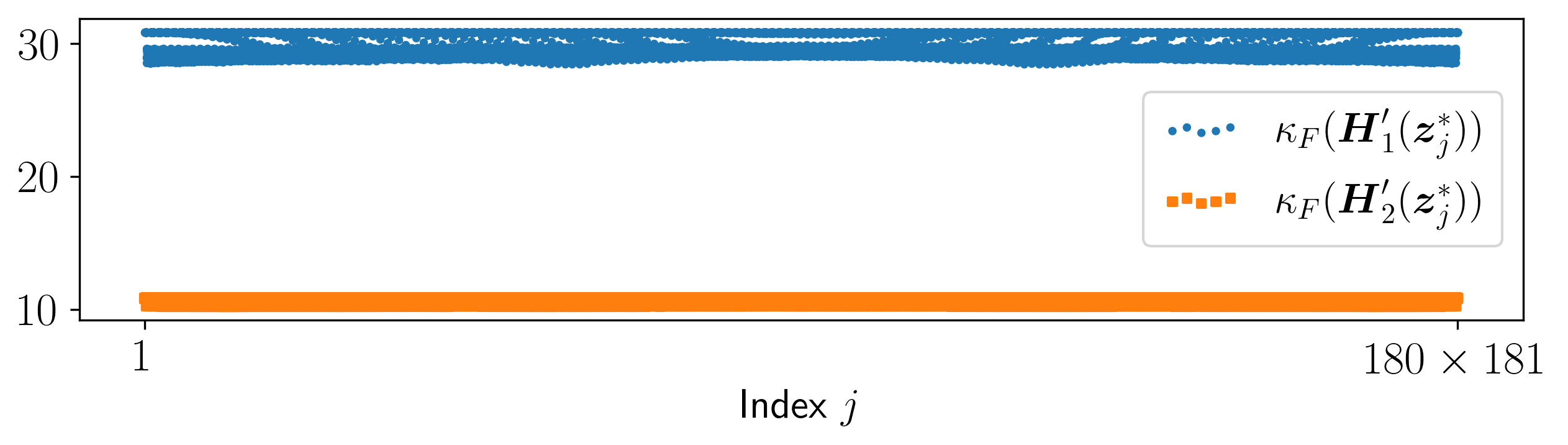}
        \vspace{3mm}
    \caption{The scatter diagram of scaled condition numbers $\kappa_F(\bH_1^{\prime}(\bz_j^*))$ (dot point) and $\kappa_F(\bH_2^{\prime}(\bz_j^*))$ (square) for every index $j$.}
    \label{fig:kappa_scatter}
\end{figure}
To illustrate this, we depict the distributions of the $\kappa_F(\bH_1^{\prime}(\bz_j^*))$ and $\kappa_F(\bH_2^{\prime}(\bz_j^*))$ for $j=1,\ldots,J$ in \cref{fig:kappa_scatter}. It can be observed that
$\kappa_F(\bH_2^{\prime}(\bz_j^*))$ is always less than $\kappa_F(\bH_1^{\prime}(\bz_j^*))$, which confirms our analysis in \cref{thm:kac_GC_converge}.

We display the results of the computed basis sinograms in \cref{fig:basis_sinogram} by using \cref{alg:nkm_GC} to solve different nonlinear systems for spectra pairs I and II. It can be found that there is no difference between the computed results and the truths visually. 
\begin{figure}[htbp]
    \centering
    \includegraphics[width=0.95\textwidth]{./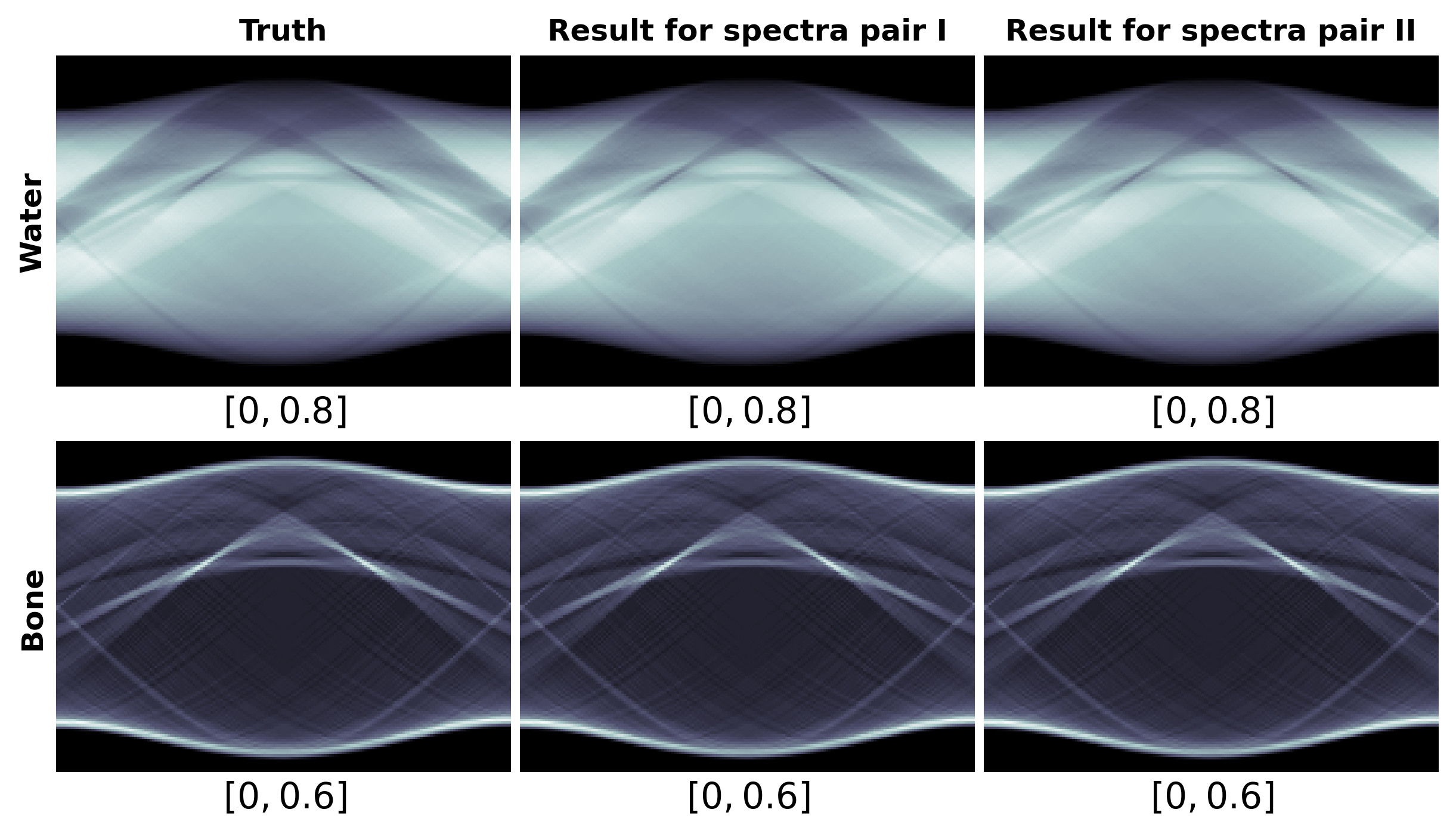}
        \vspace{3mm}
    \caption{The water and bone basis sinograms for the forbild head phantom are shown in rows 1 and 2, respectively. Column 1 displays the truth basis sinograms, while columns 2 and 3 show the computed basis sinograms obtained by the NKM after $2\times10^4$ iterations for solving different nonlinear systems corresponding to the spectra pairs presented in \cref{fig:spectra_pair_contrast}, respectively.}
    \label{fig:basis_sinogram}
\end{figure}

\subsection{Test 2: Geometrically-inconsistent DECT}
\label{test2:GIC_DECT}

In this experiment, we are concerned with the effectiveness of the NKM (\cref{alg:nkm_GIC}) using the maximum residual strategy to solve the nonlinear system \cref{eq:nonsys_GIC_nkm} in geometrically-inconsistent DECT. 
 
The following metric of relative error is used to assess the reconstruction accuracy
\begin{align}\label{eq:REk_f}
    \text{RE}_{\mathbf{f}}^k := \frac{ \|\mathbf{f}^k - \mathbf{f}^*\|}{\|\mathbf{f}^*\|},
\end{align}
where $\mathbf{f}^k$ represents the basis images computed by the NKM using the maximum residual strategy at the $k$-th iteration of \cref{alg:nkm_GIC}, and $\mathbf{f}^*$ denotes the truth basis images. The metric below is also considered to evaluate the relative error of data $\mathbf{g}$
\begin{align}\label{eq:REn_g}
    \text{RE}_{\mathbf{g}}^{(n)}:= \frac{\|\bdK(\mathbf{f}^{(n)}) - \mathbf{g}\|}{\|\mathbf{g}\|},
\end{align}
where $\mathbf{f}^{(n)}$ represents the basis images computed by the NKM using the maximum residual strategy after $n$ epochs. 

Based on the faster convergence rate observed in \cref{fig:metric_linear}, we adopt spectra pair II in this test as shown in \cref{fig:spectra_pair_contrast}. The previously forbild head phantom is also used here as the truth basis images. Additionally, we employ the geometrically-inconsistent ray paths for the different spectra. Specifically, for 80-kV spectrum, we utilize 384 parallel projections uniformly sampled on $[-1.5, 1.5]$ cm for each of the 384 views uniformly distributed over the interval $[0, \pi)$. The filtered 140-kV spectrum also has the same number of projections and views, but the views are uniformly distributed over the interval $[\pi/512, \pi/512+\pi)$. Note that the system to be solved contains $294912$ nonlinear equations and $32768$ unknowns.

Without loss of generality, we set the zero vector as the initial point of the NKM in  \cref{alg:nkm_GIC}. After 60 epochs (about $1.4\times10^7$ iterations), the curves of metrics RE$_{\mathbf{f}}^k$ and RE$_{\mathbf{g}}^{(n)}$ are plotted in \cref{fig:REk_GIC}, which reveal that under given conditions, the large-scale nonlinear system in \cref{eq:nonsys_GIC_nkm} for geometrically-inconsistent DECT can be solved accurately by the NKM (\cref{alg:nkm_GIC}) using the maximum residual strategy. As shown in figure \ref{fig:REk_GIC}, the linear convergence also confirms the theory proved in \cref{sec:conver_analy} and \cref{sec:NKM_conver_MSCT}. 
\begin{figure}[htbp]
    \centering
    \includegraphics[width=0.45\textwidth]{./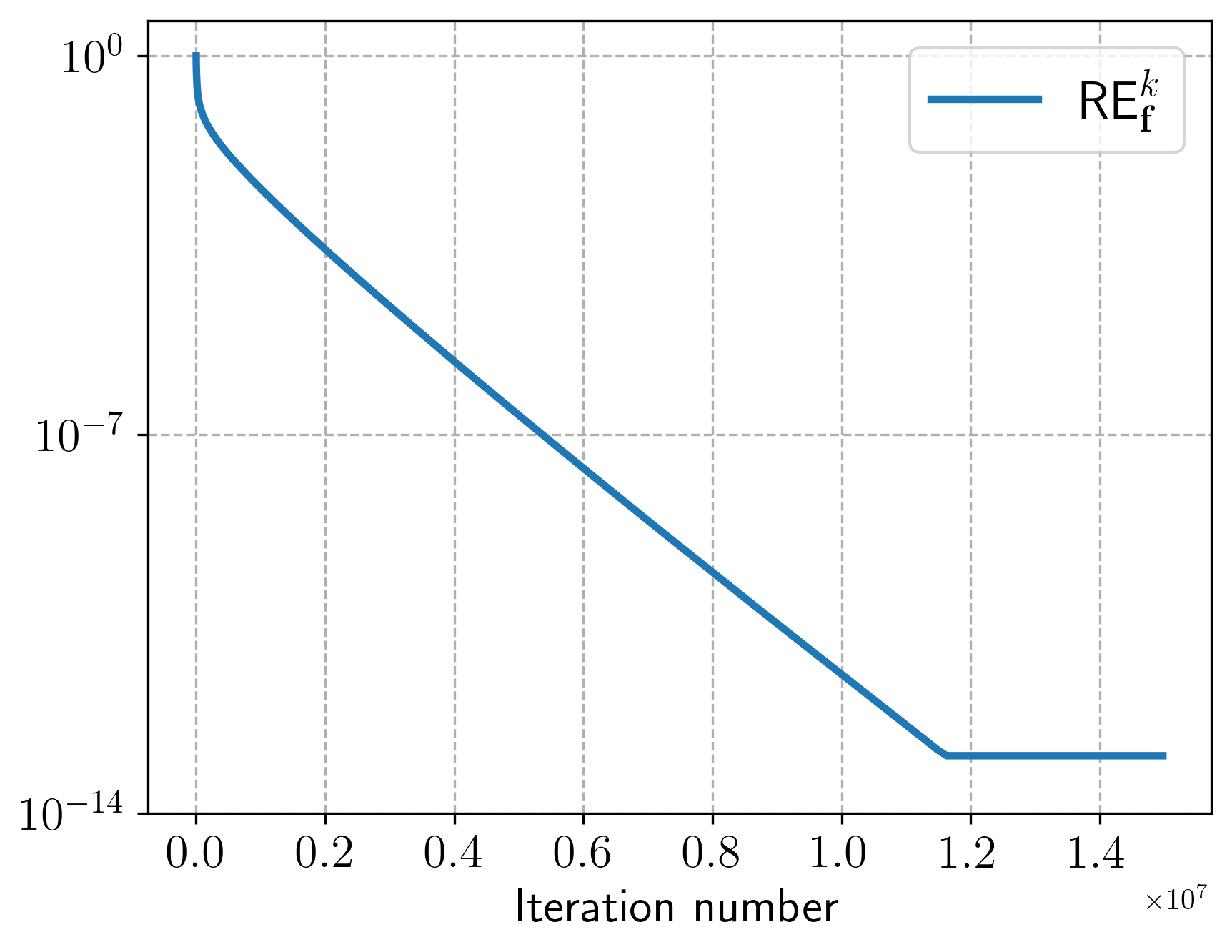}
    \includegraphics[width=0.45\textwidth]{./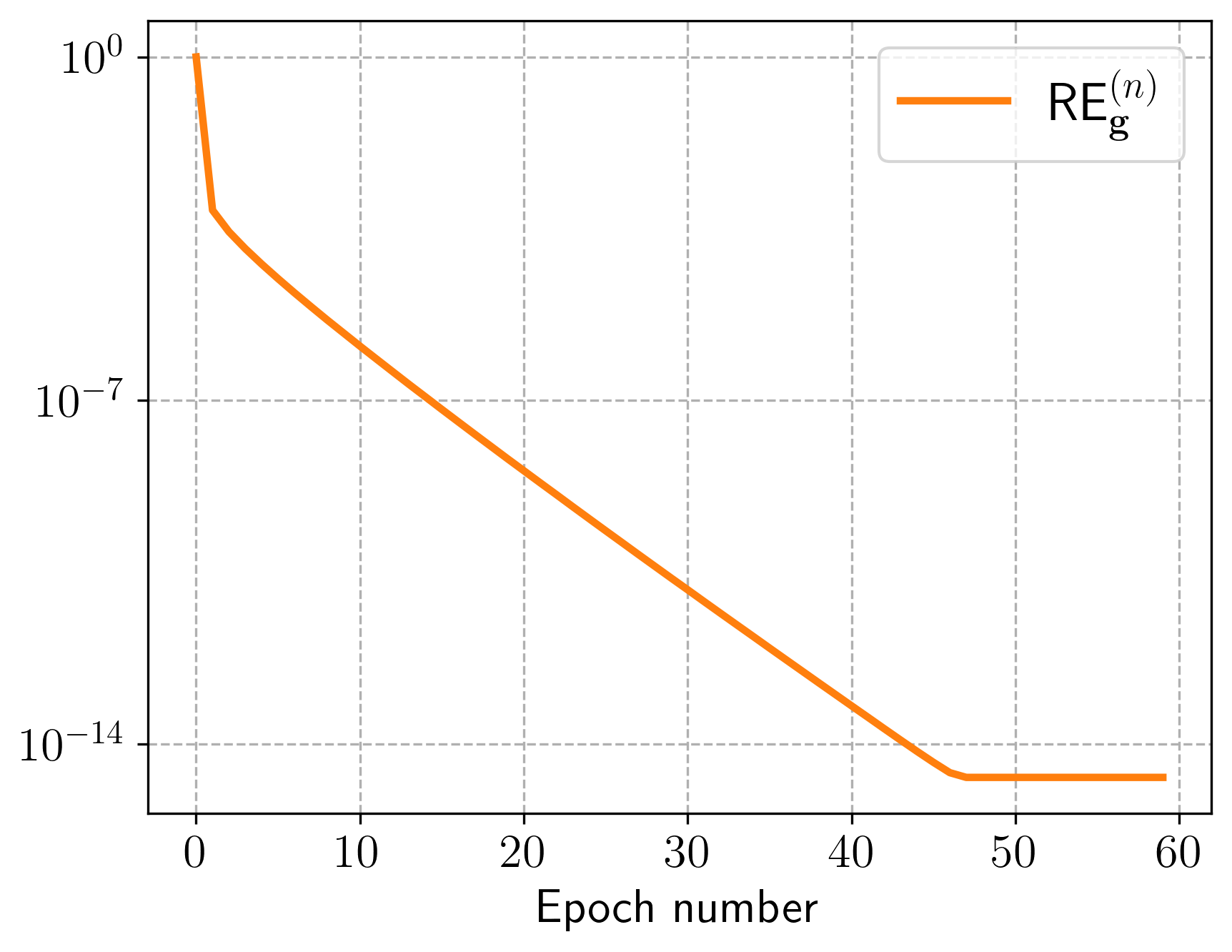}
        \vspace{3mm}
    \caption{The metrics RE$_{\mathbf{f}}^k$ \cref{eq:REk_f} (left) and RE$_{\mathbf{g}}^{(n)}$ \cref{eq:REn_g} (right) are plotted in a semi-log scale as a function of iteration number and epoch number for the reconstruction of the forbild head phantom obtained by the NKM using the maximum residual strategy, respectively.}
    \label{fig:REk_GIC}
\end{figure}

Furthermore, the basis images and VMIs obtained at 60 keV and 100 keV are displayed in \cref{fig:recon_forbild}, which are quite close to the truths by visual comparison.
\begin{figure}[htbp]
    \centering
    \includegraphics[width=0.95\textwidth]{./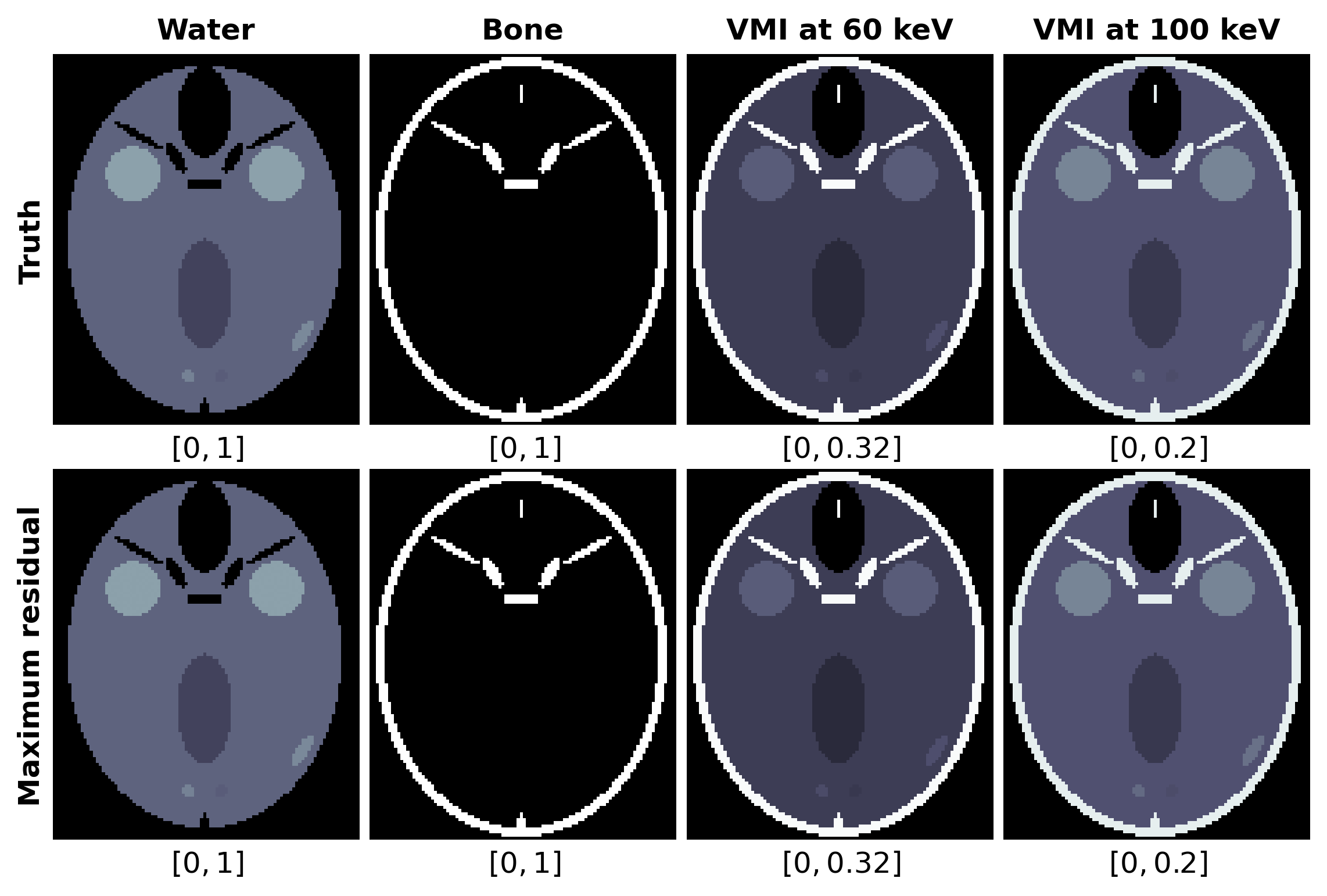}
        \vspace{3mm}
    \caption{The basis images of water (column 1) and bone (column 2) and VMIs at energies 60 keV (column 3) and 100 keV (column 4), respectively. From top to bottom, the truths (row 1), the reconstructed results obtained by the NKM using the maximum residual strategy after 60 epochs (row 2), respectively.}
    \label{fig:recon_forbild}
\end{figure}

\section{Conclusion}
\label{sec:Discussion}

In this work, we investigated the convergence of the NKM for solving the systems of nonlinear equations with component-wise convex mapping, where the function corresponding to each equation is convex. Such kind of nonlinear systems have been emerged widely in various practical applications, including MSCT, discrete X-ray transform with nonlinear partial volume effect, phase retrieval, and many others, which constitute a class of nonlinear imaging inverse problems. 

It is known that the component-wise local TCC has been commonly used in the convergence analysis for the NKM. We showed that in the finite-dimensional space the nonlinear system satisfied such a condition would degenerate to be linear in the local region, and the component-wise convex mapping of interest may not satisfy the component-wise local TCC. To this end, we proposed a novel condition named {\it relative gradient discrepancy condition} (RGDC), and made use of it to prove the convergence and even the convergence rate of the NKM with several general index selection strategies. In particular, these strategies include the often used cyclic strategy and maximum residual strategy. 

Specifically, we further studied the application of the NKM for solving the nonlinear system in MSCT image reconstruction. We proved that the relevant nonlinear mapping fulfills the proposed RGDC rather than the component-wise local TCC, and provided the global convergence results for the NKM based on the previously established theoretical results. Numerical experiments demonstrated the numerical convergence of the NKM for solving the problems of interest and validated our theoretical results. Note that the proposed RGDC is relatively easy to verify.

To the best of our knowledge, this is the first literature on the convergence analysis of the NKM without using local TCC when solving the component-wise convex nonlinear system. 

\begin{appendices}
\section{}
\label{sec:appendix}

\subsection{Proof of \cref{lemma:RGDC_bound}}\label{proof:RGDC_bounded}
\begin{proof}
    Since the points $\bx_1,\bx_2$ are arbitrary in \cref{ass:relative_grad_discrepancy}, one can swap their positions, and then sum the two inequalities to obtain
    \begin{multline*}
        \|\nabla F_j(\bx_1)\|^2 - 2 \langle \nabla F_j(\bx_1), \nabla F_j(\bx_2) \rangle + \|\nabla F_j(\bx_2)\|^2 =
        \| \nabla F_j(\bx_1) - \nabla F_j(\bx_2) \|^2 \\
        \le \frac{\gamma^2}{2} \left(\|\nabla F_j(\bx_1)\|^2 + \|\nabla F_j(\bx_2)\|^2\right).
    \end{multline*}
    Arranging the above terms and using Cauchy--Schwarz inequality yield that
    \begin{align*}
        \left\langle \nabla F_j(\bx_1), \nabla F_j(\bx_2) \right\rangle &\ge \frac{1}{2} \left(1-\frac{\gamma^2}{2}\right) \left( \|\nabla F_j(\bx_1)\|^2 + \|\nabla F_j(\bx_2)\|^2 \right) \\
        &\ge \left(1-\frac{\gamma^2}{2}\right) \|\nabla F_j(\bx_1)\|\cdot \|\nabla F_j(\bx_2)\|. 
    \end{align*}
    Then the result \cref{eq:angle_bounded} immediately follows. Furthermore, using triangle inequality, \cref{eq:length_bounded} can also be obtained.
\end{proof}

\subsection{Proof of \cref{lemma:RGDC_property}}\label{proof:RGDC}
\begin{proof}
    Fixing $\bx_1$, by \cref{ass:relative_grad_discrepancy}, we obtain that for any $\bx\in\Omega$
    \begin{align}
        \|\bF^{\prime}(\bx_1) - \bF^{\prime}(\bx)\|_{F} =& \sqrt{\sum_{j=1}^J \|\nabla F_j(\bx_1) - \nabla F_j(\bx)\|^2 } \nonumber \\
        \le& \sqrt{\gamma^2 \sum_{j=1}^J \|\nabla F_j(\bx_1)\|^2 } =\gamma \|\bF^{\prime}(\bx_1)\|_{F}. \label{eq:F_norm_gamma}
    \end{align}
    Since $\bF^{\prime}(\bx_1)$ has full column rank, we can write its QR decomposition as 
    \begin{equation*}
       \bF^{\prime}(\bx_1) = Q \begin{bmatrix}
        R \\
        0
       \end{bmatrix},
    \end{equation*}
    where $Q\in\Real^{J\times J}$ is an orthogonal matrix, and $R\in\Real^{N\times N}$ is an invertible upper-triangular matrix. Note that they are both related to $\bx_1$. For any $\bx\in\Omega$, define the following matrix 
    \begin{align*}
        A(\bx_1,\bx) = 
        \begin{bmatrix}
            \left(\bF^{\prime}(\bx) - \bF^{\prime}(\bx_1) \right)R^{-1}  & 0
        \end{bmatrix}
        Q^{\tra}\in\Real^{J\times J}.
    \end{align*} 
    Then it is easy to show that
    \begin{align*}
        A(\bx_1,\bx) \bF^{\prime}(\bx_1) = \bF^{\prime}(\bx) - \bF^{\prime}(\bx_1). 
    \end{align*}
    The properties of the matrix norm and \cref{eq:F_norm_gamma} yield that
    \begin{multline*}
        \|A(\bx_1,\bx)\| = \|\left(\bF^{\prime}(\bx) - \bF^{\prime}(\bx_1) \right)R^{-1}\| \le\| \bF^{\prime}(\bx) - \bF^{\prime}(\bx_1)\| \cdot \|R^{-1}\| \\
        = \| \bF^{\prime}(\bx) - \bF^{\prime}(\bx_1)\| \cdot \|\bF^{\prime}(\bx)^{\dagger}\|
        \le \| \bF^{\prime}(\bx) - \bF^{\prime}(\bx_1)\|_F \cdot \|\bF^{\prime}(\bx)^{\dagger}\| \\
        \le \gamma \|\bF^{\prime}(\bx_1)\|_F \cdot \|\bF^{\prime}(\bx_1)^{\dagger}\| = \gamma\kappa_F(\bF^{\prime}(\bx_1)).
    \end{multline*}
    Using the above results and mean value theorem, it follows that
    \begin{align*}
    \|\bF(\bx_1) - &\bF(\bx_2)-\bF^{\prime} (\bx_1 ) (\bx_1-\bx_2 )\| \\
    &=\left\| \int_0^1 \bF^{\prime}(\bx_t )(\bx_1-\bx_2 ) -\bF^{\prime}(\bx_1 ) (\bx_1-\bx_2 )\ \mathrm{d}t \right\| \\
    &=\left\| \int_0^1   A(\bx_1,\bx_t) \bF^{\prime}(\bx_1 ) (\bx_1-\bx_2 )\ \mathrm{d}t \right\| \\
    &\le  \int_0^1  \|A(\bx_1,\bx_t)\| \cdot \|\bF^{\prime}(\bx_1 ) (\bx_1-\bx_2)\|\  \mathrm{d}t   \\
    &\le  \gamma\kappa_F(\bF^{\prime}(\bx_1)) \|\bF^{\prime}(\bx_1 ) (\bx_1-\bx_2)\|,
    \end{align*}
    where $\bx_t := t\bx_1 + (1-t)\bx_2$. Note that since $\Omega$ is a convex set, $\bx_t\in\Omega$. Thus mean value theorem is available and the \ref{ass:relative_grad_discrepancy} holds at points $\bx_1,\bx_t$.
\end{proof}

\subsection{Proof of \cref{lemma:unique_solution}}\label{proof:RGDC_unique}
\begin{proof}
    By \cref{lemma:RGDC_property}, we know that \cref{eq:derived_nonlin_condi} holds for $\bx^*$ and any $\bx\in\Omega$. Assume that there exists another solution $\bar{\bx}^*\in\Omega$. Taking $\bar{\bx}^*$ into \cref{eq:derived_nonlin_condi} implies that
    \begin{equation*}
        \|\bF(\bx^*) - \bF(\bar{\bx}^*)-\bF^{\prime}(\bx^*) (\bx^*-\bar{\bx}^*)\| \le
        \gamma\kappa_F(\bF^{\prime}(\bx^*)) \|\bF^{\prime}(\bx^* ) (\bx^*-\bar{\bx}^*)\|.
    \end{equation*} 
    Since $\gamma\kappa_F(\bF^{\prime}(\bx^*)) < 1$, the right-hand side of the above inequality has to be $0$. Since $\bF^{\prime}(\bx^*)$ has full column rank, it follows that $\bar{\bx}^* = \bx^*$.
\end{proof}

\subsection{Proof of \cref{lem:NKM_convex}.}
\begin{proof}\label{proof:NKM_convex}
    Using the convexity and the iteration procedure in \cref{eq:non_kacz}, we obtain the following result immediately
    \begin{equation*}
            F_{j_k}(\bx^{k+1})  \geq F_{j_k}(\bx^{k}) + \nabla F_{j_k}(\bx^{k})^{\tra}(\bx^{k+1}-\bx^{k}) = 0. 
    \end{equation*}
\end{proof}

\subsection{Proof of \cref{lem:dist_decrease}.}
\begin{proof}\label{proof:dist_decrease}
    When $F_{j_k}(\bx^{k})> 0$, straightforward calculation leads to
    \begin{multline*}
        \| \bx^{k+1} -\bx^* \|^2- \| \bx^{k}-\bx^* \|^2
        = \| \bx^{k+1}-\bx^{k} \|^2+2 \langle\bx^{k+1}-\bx^{k}, \bx^{k}-\bx^* \rangle \\
        = \left\|\frac{F_{j_k}(\bx^{k})}{ \|\nabla F_{j_k} (\bx^{k} ) \|^2} \nabla F_{j_k} (\bx^{k} ) \right\|^2 + 2 \left\langle\frac{-F_{j_k}(\bx^{k})}{ \|\nabla F_{j_k} (\bx^{k} ) \|^2} \nabla F_{j_k} (\bx^{k} ), \bx^{k}-\bx^* \right\rangle \\
        = \frac{ (F_{j_k}(\bx^{k}) )^2}{ \|\nabla F_{j_k} (\bx^{k} ) \|^2}+ \frac{2 F_{j_k} (\bx^{k} )}{ \|\nabla F_{j_k} (\bx^{k} ) \|^2} \left[F_{j_k} (\bx^{k} )-F_{j_k} (\bx^* )-\nabla F_{j_k} (\bx^{k} )^{\tra} (\bx^{k}-\bx^* ) \right] \\
        \quad - \frac{2 (F_{j_k} (\bx^{k} ))^2}{ \|\nabla F_{j_k} (\bx^{k} ) \|^2}  \\
        = \frac{F_{j_k} (\bx^{k} ) }{ \|\nabla F_{j_k} (\bx^{k} ) \|^2} \Big\{ 2  \left[F_{j_k} (\bx^{k}) -F_{j_k} (\bx^*)-\nabla F_{j_k} (\bx^{k} )^{\tra} (\bx^{k}-\bx^* )\right] -(F_{j_k}(\bx^{k}) ) \Big\}.
    \end{multline*}
    The convexity of $F_{j_k}(\bx)$ yields that
    \begin{align*}
        F_{j_k} (\bx^{k}) -F_{j_k} (\bx^*)-\nabla F_{j_k} (\bx^{k} )^{\tra} (\bx^{k}-\bx^* )\le 0.
    \end{align*}  
    Together with $F_{j_k}(\bx^{k})> 0$, the desired result follows.
\end{proof}
\subsection{Proof of \cref{lem:RGDC_result}.}
\begin{proof}\label{proof:RGDC_result}
According to the conclusion in \cref{lem:dist_decrease}, we obtain $\{\bx^k\}$ is always in $\mathcal{B}_{\rho}(\bx^*)$. By mean value theorem, it follows that 
    \begin{align*}
        |F_{j_k}(\bx^{k+1})| &= |F_{j_k}(\bx^{k}) + \nabla F_{j_k}(\bx_{\xi})^{\tra}(\bx^{k+1}-\bx^{k})|\\
        &=\left|F_{j_k}(\bx^{k}) -\frac{F_{j_k}(\bx^{k})}{\|\nabla F_{j_k}(\bx^{k})\|^2} \nabla F_{j_k}(\bx_{\xi})^{\tra}\nabla F_{j_k}(\bx^{k})\right| \\
        &= |F_{j_k}(\bx^{k})|\cdot \left|1-\frac{ \nabla F_{j_k}(\bx_{\xi})^{\tra}\nabla F_{j_k}(\bx^{k})}{\|\nabla F_{j_k}(\bx^{k})\|^2}\right|, 
    \end{align*}
    where $\bx_{\xi} = \xi\bx^{k} + (1-\xi)\bx^{k+1}\in\mathcal{B}_{\rho}(\bx^*)$, $0\le \xi\le 1$. Notice that
    \begin{align*}
        \frac{ \nabla F_{j_k}(\bx_{\xi})^{\tra}\nabla F_{j_k}(\bx^{k})}{\|\nabla F_{j_k}(\bx^{k})\|^2} = \frac{\nabla F_{j_k}(\bx_{\xi})^{\tra}\nabla F_{j_k}(\bx^{k})}{\|\nabla F_{j_k}(\bx_{\xi})\|\cdot \|\nabla F_{j_k}(\bx^{k})\|} \cdot \frac{\|\nabla F_{j_k}(\bx_{\xi})\|}{\|\nabla F_{j_k}(\bx^{k})\|}.
    \end{align*}
    Hence, using \cref{eq:angle_bounded} and \cref{eq:length_bounded} proved in \cref{lemma:RGDC_bound}, we have
    \begin{align*}
        1\cdot(1+\gamma)\ge \frac{\nabla F_{j_k}(\bx_{\xi})^{\tra}\nabla F_{j_k}(\bx^{k})}{\|\nabla F_{j_k}(\bx_{\xi})\|\cdot \|\nabla F_{j_k}(\bx^{k})\| } \cdot \frac{\|\nabla F_{j_k}(\bx_{\xi})\|}{\|\nabla F_{j_k}(\bx^{k})\|} \ge \left(1-\frac{\gamma^2}{2}\right)\cdot (1-\gamma).
    \end{align*}
    Consequently, the result follows by
    \begin{align*}
        |F_{j_k}(\bx^{k+1})| &\le |F_{j_k}(\bx^{k})|\cdot \max\left\{|1-(1+\gamma)|,|1-\left(1-\frac{\gamma^2}{2}\right)\cdot (1-\gamma)| \right\} \\
        &= \left(\gamma+\frac{\gamma^2}{2}-\frac{\gamma^3}{2}\right) |F_{j_k}(\bx^{k})|.
    \end{align*}
    This completes the proof. 
    \end{proof}

\subsection{Verification of component-wise convexity for the mapping in geometrically-consistent MSCT}
\label{subsec:verify_convex}

\begin{lemma}\label{lem:map_MSCT_convex}
    Assume that $s_m^{[p]}\ge0$ and $b_{dm}>0$ for all $m,p,d$. The mapping $\bH(\bz)$ defined in \cref{eq:nonsys_GC_nkm} is component-wise convex with respect to $\bz$. Furthermore, if $\boldsymbol{1}\notin Range(B^{\tra})$ and $B$ has full row rank, where $\boldsymbol{1} := [1, 1, \ldots, 1]^{\tra}$, then $\bH(\bz)$ is component-wise strictly convex. 
\end{lemma}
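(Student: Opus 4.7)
The plan is to establish convexity by computing the Hessian of each scalar component $H_p$ and recognizing it as a weighted covariance matrix, which is automatically positive semidefinite. More precisely, I would introduce the column vectors $\bdb_m := [b_{1m},\ldots,b_{Dm}]^{\tra}$ and the data-dependent weights
\[
w_m(\bz) := \frac{s_m^{[p]}\exp(-\bdb_m^{\tra}\bz)}{\sum_{m'=1}^{M} s_{m'}^{[p]}\exp(-\bdb_{m'}^{\tra}\bz)}, \quad m=1,\ldots,M,
\]
so that $w_m(\bz)\ge 0$ and $\sum_m w_m(\bz) = 1$ (a normalized version of $\bdw^{[p]}(\bz)$ defined in \cref{eq:def_wx}). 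A direct computation then gives $\nabla H_p(\bz) = -\sum_m w_m(\bz)\,\bdb_m$ and
\[
\nabla^2 H_p(\bz) = \sum_{m=1}^{M} w_m(\bz)\,\bdb_m\bdb_m^{\tra} - \Bigl(\sum_{m=1}^{M} w_m(\bz)\,\bdb_m\Bigr)\Bigl(\sum_{m=1}^{M} w_m(\bz)\,\bdb_m\Bigr)^{\tra},
\]
which is exactly the covariance matrix of the random vector that takes value $\bdb_m$ with probability $w_m(\bz)$. In particular $\nabla^2 H_p(\bz)\succeq 0$ for every $\bz$, proving that $H_p$ is convex. This step reduces the convexity claim to a standard fact about log-sum-exp functions, and the computation is the routine one for exponential families.

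For the strict convexity claim, I would argue that $\bdv^{\tra}\nabla^2 H_p(\bz)\bdv = 0$ happens only when the scalar $\bdv^{\tra}\bdb_m$ is constant across those indices $m$ for which $w_m(\bz)>0$. Under the assumption $b_{dm}>0$ for all $d,m$, together with $s_m^{[p]}\ge 0$ and the fact that at least one $s_m^{[p]}$ must be positive (otherwise $H_p$ is not defined), the weights $w_m(\bz)$ are positive on the support of $s^{[p]}$; combined with full row rank of $B$ this forces the candidate equation $B^{\tra}\bdv = c\,\boldsymbol{1}$ on that support. Splitting into cases $c=0$ and $c\ne 0$: the case $c=0$ gives $B^{\tra}\bdv = \bzero$, which by the full row rank of $B$ implies $\bdv = \bzero$; and the case $c\ne 0$ would place $\boldsymbol{1}$ in $Range(B^{\tra})$, contradicting the remaining hypothesis. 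Hence $\nabla^2 H_p(\bz)\succ 0$ for every $\bz$, and $H_p$ is strictly convex.

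The main obstacle, and the only point that needs care, is justifying the reduction from ``the covariance is degenerate'' to ``$B^{\tra}\bdv=c\boldsymbol{1}$ holds as a genuine vector identity'' when some $s_m^{[p]}$ may vanish: in that event the null direction of the covariance is only constrained on the indices where $w_m(\bz)>0$. I would handle this either by restricting attention to the nonzero support of $s^{[p]}$ and replacing $B$ by its corresponding submatrix in the two conditions (so that the hypotheses are really assumed for that submatrix), or by reading the statement as requiring $s_m^{[p]}>0$ for every spectrum $p$ and bin $m$, in which case $w_m(\bz)>0$ for all $m$ and the deduction is immediate. All other steps are straightforward calculus and linear algebra.
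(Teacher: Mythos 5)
Your proof is correct and takes essentially the same route as the paper: compute $\nabla^2 H_p(\bz) = B\bigl(\diag(\bdw)-\bdw\bdw^{\tra}\bigr)B^{\tra}$, observe that the middle factor is a covariance-type matrix (the paper verifies positive semidefiniteness by the same Cauchy--Schwarz argument you invoke implicitly), and then use $\boldsymbol{1}\notin Range(B^{\tra})$ together with the rank condition to rule out a null direction. The subtlety you flag about vanishing weights is genuine: the paper's own proof asserts that linear dependence of $\bdv\odot\sqrt{\bdw}$ and $\sqrt{\bdw}$ forces $\bdv=\nu\boldsymbol{1}$, which only holds when every $w_m>0$, i.e.\ every $s_m^{[p]}>0$; under the stated hypothesis $s_m^{[p]}\ge 0$ the conclusion should really be read on the support of $\bds^{[p]}$, exactly as you propose.
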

\begin{proof}
Firstly, $\bH$ is continuously differentiable. The gradient of $H_p(\bz)$ is given by
\begin{align*}
    \nabla H_p(\bz) = -B \bdw^{[p]}(\bz), 
\end{align*}
where 
\begin{equation}\label{eq:def_wx}
    \bdw^{[p]}(\bz) = \frac{\bds^{[p]}\odot \bzeta(\bz)}{  {\bds^{[p]}}^{\tra} \bzeta(\bz)},\quad  \bds^{[p]} = [s_1^{[p]},\ldots,s_M^{[p]}]^{\tra}\in\Real^M,
\end{equation}
and 
\begin{align*}
    \bzeta(\bz) = [\zeta_{1}(\bz), \ldots, \zeta_{M}(\bz)]^{\tra},\quad
    \zeta_{m}(\bz) =\exp\left(-\sum_{d=1}^D b_{dm} z_{d}\right).
\end{align*}
Here $\odot$ represents the operation of the element-wise product. We abbreviate $\bdw^{[p]}(\bz)$ as $\bdw$ for simplicity below. Then the Hessian matrix of $H_p(\bz)$ is given by
\begin{align*}
    \nabla^2 H_p(\bz) = B \diag(\bdw) B^{\tra} - (B \bdw) (B \bdw)^{\tra} = B \big(\diag(\bdw)-\bdw\bdw^{\tra}\big) B^{\tra}.
\end{align*}
Since $\bdw\ge 0$ and $\boldsymbol{1}^{\tra}\bdw = 1$, $\forall \bdv\in\Real^M$,
\begin{align*}
    \bdv^{\tra} \big(\diag(\bdw)-\bdw\bdw^{\tra}\big) \bdv &= (\boldsymbol{1}^{\tra}\bdw) \langle \bdv\odot\sqrt{\bdw}, \bdv\odot\sqrt{\bdw} \rangle - \langle \bdv, \bdw \rangle^2 \\
    &= \langle \sqrt{\bdw},\sqrt{\bdw}\rangle \langle \bdv\odot\sqrt{\bdw}, \bdv\odot\sqrt{\bdw} \rangle - \langle \sqrt{\bdw}, \bdv\odot\sqrt{\bdw} \rangle^2\\
    &\ge 0.
\end{align*}
The last inequality follows from the Cauchy--Schwarz inequality, the equal sign holds if and only if $\bdv\odot\sqrt{\bdw}$ and $\sqrt{\bdw}$ are linear dependent, which implies that $\bdv = \nu\boldsymbol{1}$, $\nu\in\Real$. Then for any $\bdu\in\Real^D$, 
\begin{align*}
    \bdu^{\tra} \nabla^2 H_p(\bz) \bdu &= (B^{\tra}\bdu)^{\tra} \big(\diag(\bdw)-\bdw\bdw^{\tra}\big) (B^{\tra}\bdu)\ge0,
\end{align*}
it follows that $\nabla^2 H_p(\bz)\succeq 0$, then $H_p(\bz)$ is convex. Moreover, if $\boldsymbol{1}\notin Range(B^{\tra})$ and $B$ has full column rank, there is no $\bdu$ such that $B^{\tra}\bdu = \nu\boldsymbol{1}$, and $H_p(\bz)$ is strictly convex. 
\end{proof}

\subsection{Verification of the RGDC for the mapping in geometrically-consistent MSCT}\label{subsec:verify_RGDC}
\begin{lemma}\label{lem:general_verify_grad}
    Let $\bH(\bz)$ be defined in \cref{eq:nonsys_GC_nkm}. Assume that  
    \begin{equation}\label{eq:gamma_general}
        \gamma_B := \max\limits_{\bdw_1,\bdw_2\in\Delta_M} \frac{\|B\bdw_1 - B\bdw_2\|}{\|B\bdw_1 \|}<1, 
    \end{equation}
    where $\Delta_M$ is the unit simplex 
\begin{align*}
\left\{\bdw \mid w_m\ge 0,\sum_{m=1}^M w_m = 1  \right\}.
\end{align*} 
    Then $\bH(\bz)$ satisfies the \ref{ass:relative_grad_discrepancy} for the above $\gamma_B$ in $\Real^D$.
\end{lemma}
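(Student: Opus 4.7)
The plan is to recognize that the gradient of each component $H_p$ of the MSCT mapping is exactly the image under $-B$ of an element of the simplex $\Delta_M$, and then read off the RGDC directly from the very definition of $\gamma_B$. No auxiliary estimates are needed; the lemma is essentially a bookkeeping statement once the right geometric picture is in place.

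First I would invoke the explicit gradient formula derived in the proof of \cref{lem:map_MSCT_convex}, namely $\nabla H_p(\bz) = -B\,\bdw^{[p]}(\bz)$ with $\bdw^{[p]}(\bz)$ given by \cref{eq:def_wx}. Next I would verify that $\bdw^{[p]}(\bz)\in\Delta_M$ for every $\bz\in\Real^D$ and every index $p$: non-negativity of the entries follows from $s_m^{[p]}\ge 0$ and $\zeta_m(\bz)=\exp(-\sum_d b_{dm} z_d)>0$, and the sum condition follows immediately from the normalizing denominator, since
\begin{equation*}
\boldsymbol{1}^{\tra}\bdw^{[p]}(\bz)=\frac{\boldsymbol{1}^{\tra}\!\bigl(\bds^{[p]}\odot\bzeta(\bz)\bigr)}{{\bds^{[p]}}^{\tra}\bzeta(\bz)}=1.
\end{equation*}

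With this in hand, for arbitrary $\bz_1,\bz_2\in\Real^D$ the pair $\bdw_1:=\bdw^{[p]}(\bz_1)$, $\bdw_2:=\bdw^{[p]}(\bz_2)$ lies in $\Delta_M$, so the definition of $\gamma_B$ in \cref{eq:gamma_general} gives
\begin{equation*}
\|B\bdw_1-B\bdw_2\|\le\gamma_B\,\|B\bdw_1\|.
\end{equation*}
Translating back through $\nabla H_p(\bz)=-B\bdw^{[p]}(\bz)$ yields $\|\nabla H_p(\bz_1)-\nabla H_p(\bz_2)\|\le\gamma_B\,\|\nabla H_p(\bz_1)\|$, which is precisely the \ref{ass:relative_grad_discrepancy} on the whole space $\Real^D$.

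There is essentially no obstacle in the argument itself; the genuine content was already packaged into the definition of $\gamma_B$, which restricts the maximization to the compact set $\Delta_M$ on which all relevant gradients live. The only point worth flagging is that the hypothesis $\gamma_B<1$ implicitly guarantees $B\bdw\ne\bzero$ for every $\bdw\in\Delta_M$, so the ratio defining $\gamma_B$ is finite and the maximum is attained by continuity on the compact simplex; I would mention this briefly to justify that \cref{eq:gamma_general} is well-posed before carrying out the one-line derivation above.
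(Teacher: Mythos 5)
Your proposal is correct and follows essentially the same route as the paper's proof: identify $\nabla H_p(\bz) = -B\,\bdw^{[p]}(\bz)$ with $\bdw^{[p]}(\bz)\in\Delta_M$ and then read off the bound from the definition of $\gamma_B$. The extra remarks you include (explicit verification of simplex membership and the well-posedness of the maximum in \cref{eq:gamma_general}) are sound and slightly more careful than the paper's one-line version, but do not change the argument.
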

\begin{proof}
    By the formula in \cref{eq:def_wx}, it follows that for any $\bz\in\Real^D$, $\bdw^{[p]}(\bz)\in\Delta_M$. Since $\|\nabla H_p(\bz)\|$ never vanishes, for any $\bz_1,\bz_2\in\Real^D$, we have
\begin{align*}
    \frac{\| \nabla H_p(\bz_1) - \nabla H_p(\bz_2) \|}{\|\nabla H_p(\bz_1)\|} = \frac{\|B\bdw^{[p]}(\bz_1) - B\bdw^{[p]}(\bz_2)\| }{\|B\bdw^{[p]}(\bz_1)\|}\le \gamma_B.
\end{align*}
\end{proof}

Furthermore, we give a specific example of $\bH$ such that the \ref{ass:relative_grad_discrepancy} holds.
\begin{lemma}\label{lem:specific_verify_grad}
Let $\bH(\bz)$ be defined in \cref{eq:nonsys_GC_nkm}. 
Assume that for every $1\le d\le D$, there exists $m_1,m_2$ such that $b_{d{m_1}}<b_{dm}$ when $m\neq m_1$ and 
$b_{d{m_2}}>b_{dm}$ when $m\neq m_2$, respectively. If
\begin{align*}
    \widetilde{\gamma}_B := \sqrt{ \frac{\sum_{d=1}^D \left( b_{d{m_1}} - b_{d{m_2}} \right)^2 }{\sum_{d=1}^D \left( b_{d{m_1}} \right)^2 } } <1,    
\end{align*} 
then $\bH(\bz)$ satisfies the \ref{ass:relative_grad_discrepancy} for the above $\widetilde{\gamma}_B$ in $\Real^D$.  
\end{lemma}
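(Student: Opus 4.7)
The plan is to reduce this to \cref{lem:general_verify_grad} by showing that, under the stated hypothesis, the constant $\gamma_B$ defined in \cref{eq:gamma_general} satisfies $\gamma_B \le \widetilde{\gamma}_B < 1$. Concretely, for arbitrary $\bdw_1, \bdw_2 \in \Delta_M$ I would bound the numerator $\|B\bdw_1 - B\bdw_2\|$ from above and the denominator $\|B\bdw_1\|$ from below in terms of the common extremal columns indexed by $m_1, m_2$.

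For the numerator, the function $(\bdw_1,\bdw_2) \mapsto \|B(\bdw_1-\bdw_2)\|$ is convex on the polytope $\Delta_M\times\Delta_M$, so its maximum is attained at a vertex, i.e.\ at some pair of standard basis vectors $(\bde_m, \bde_{m'})$. The maximum then equals $\max_{m,m'} \|B\bde_m - B\bde_{m'}\|$, and by the row-wise extremality of $m_1, m_2$, every entry satisfies $(b_{dm} - b_{dm'})^2 \le (b_{dm_1} - b_{dm_2})^2$. Summing over $d$ yields
\begin{equation*}
    \|B\bdw_1 - B\bdw_2\|^2 \;\le\; \sum_{d=1}^D (b_{dm_1} - b_{dm_2})^2.
\end{equation*}

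For the denominator, using the positivity $b_{dm}>0$ inherited from the MSCT setting in \cref{lem:map_MSCT_convex} together with the minimality of $b_{dm_1}$ along row $d$, one has $(B\bdw)_d = \sum_m w_m b_{dm} \ge b_{dm_1}\sum_m w_m = b_{dm_1} > 0$ for every $\bdw\in\Delta_M$. Since $B\bdw$ is componentwise bounded below by the positive vector $B\bde_{m_1}$, it follows that $\|B\bdw\|^2 \ge \sum_{d=1}^D b_{dm_1}^2$. Combining the two estimates gives $\|B\bdw_1 - B\bdw_2\|/\|B\bdw_1\| \le \widetilde{\gamma}_B$, hence $\gamma_B \le \widetilde{\gamma}_B < 1$, and \cref{lem:general_verify_grad} delivers the claim.

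I do not foresee a serious obstacle: the only subtle points are (i) making sure that both the numerator upper bound and the denominator lower bound are simultaneously realized by the same pair $(m_1,m_2)$, which is exactly what the hypothesis guarantees by making these indices common to all rows, and (ii) invoking the implicit positivity of the coefficients $b_{dm}$ to turn a componentwise inequality into an inequality of Euclidean norms.
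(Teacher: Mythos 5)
Your proof is correct and follows essentially the same route as the paper's: bound the numerator by $\|B\bde_{m_1}-B\bde_{m_2}\|$ via the row-wise extremality of $m_1,m_2$, bound the denominator below by $\|B\bde_{m_1}\|$ using the positivity of the $b_{dm}$ together with the minimality of $b_{dm_1}$, and then invoke the reduction to the general bound of \cref{lem:general_verify_grad}. The only cosmetic difference is your vertex/convexity detour for the numerator, which the paper replaces by the direct componentwise estimate $|(B\bdw_1)_d-(B\bdw_2)_d|\le b_{dm_2}-b_{dm_1}$; both land in the same place.
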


\begin{proof}
By the assumption of $B$, we have 
\begin{align*}
    \|B\bdw^{[p]}(\bz)\| \ge \|B\bde_{m_1}\| = \sqrt{\sum_{d=1}^D (b_{d{m_1}})^2 }, \quad\forall\bz \in \Real^D.
\end{align*}
Since $B\bdw>0$ for $\bdw\in\Delta_M$, for any $\bz_1,\bz_2$,  
\begin{align*}
    \frac{\| \nabla H_p(\bz_1) - \nabla H_p(\bz_2) \|}{\|\nabla H_p(\bz_1)\|} \le
    \frac{\|B\bde_{m_1} - B\bde_{m_2}\|}{\|B\bde_{m_1}\|} = \widetilde{\gamma}_B.
\end{align*}
\end{proof}

\subsection{Mappings in MSCT do not satisfy the component-wise local TCC}
\begin{proposition}\label{thm:map_MSCT_not_tcc}
Assume that $s_m^{[p]}\ge0$ and $b_{dm}>0$ for all $m,p,d$. If $\boldsymbol{1}\notin Range(B^{\tra})$ and $B$ has full row rank, then $\bH$ and $\bdK$ defined in \cref{eq:nonsys_GC_nkm,eq:nonsys_GIC_nkm} do not satisfy the component-wise local TCC in  \cref{eq:local_tcc_component}.
\end{proposition}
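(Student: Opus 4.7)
The plan is to apply \cref{thm:nonzero_curvature_tcc}: it suffices to exhibit some component of $\bH$ (respectively $\bdK$) whose level set has nonzero mean curvature at some point. The key input is the inequality chain used to derive \cref{eq:curvature}, which shows that a strictly convex component of a mapping automatically forces strictly positive mean curvature of its level sets.

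For $\bH$, the hypotheses $\boldsymbol{1}\notin Range(B^{\tra})$ and $B$ of full row rank are precisely those of \cref{lem:map_MSCT_convex}, so every component $H_p$ is strictly convex on $\Real^D$; in particular $\nabla^2 H_p(\bz)\succ 0$ at every $\bz$. Since $D\ge 2$ in the MSCT setting of interest, $\nabla^2 H_p(\bz)$ has at least two strictly positive eigenvalues, and so
\[
\textrm{tr}(\nabla^2 H_p(\bz)) > \lambda_{\max}(\nabla^2 H_p(\bz)) \ge \frac{\nabla H_p(\bz)^{\tra}\nabla^2 H_p(\bz)\nabla H_p(\bz)}{\|\nabla H_p(\bz)\|^2}.
\]
By \cref{eq:curvature} the mean curvature of every level set of $H_p$ is strictly positive at every $\bz$, and \cref{thm:nonzero_curvature_tcc} rules out the component-wise local TCC for $\bH$ in any ball.

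The $\bdK$ case requires more care, since $K_j^{[p]}$ is \emph{not} strictly convex: its Hessian
\[
\nabla^2 K_j^{[p]}(\mathbf{f}) = \nabla^2 H_p(\bz_j^{[p]})\otimes\bigl(\bda_j^{[p]}(\bda_j^{[p]})^{\tra}\bigr)
\]
inherits the rank-one defect of the second Kronecker factor, so I cannot invoke \cref{eq:curvature} directly. Instead, I would combine the mixed-product identity $(A\otimes B)(u\otimes v)=(Au)\otimes(Bv)$, the trace identity $\textrm{tr}(A\otimes B)=\textrm{tr}(A)\textrm{tr}(B)$, and $\|u\otimes v\|=\|u\|\cdot\|v\|$ with $\nabla K_j^{[p]}(\mathbf{f}) = \nabla H_p(\bz_j^{[p]})\otimes\bda_j^{[p]}$ to reduce the curvature numerator explicitly:
\[
\textrm{tr}(\nabla^2 K_j^{[p]})-\frac{(\nabla K_j^{[p]})^{\tra}\nabla^2 K_j^{[p]}\nabla K_j^{[p]}}{\|\nabla K_j^{[p]}\|^2} = \|\bda_j^{[p]}\|^2\left(\textrm{tr}(\nabla^2 H_p) - \frac{(\nabla H_p)^{\tra}\nabla^2 H_p\nabla H_p}{\|\nabla H_p\|^2}\right).
\]
For any ray with $\bda_j^{[p]}\ne\bzero$ the right-hand side is strictly positive by the $\bH$ argument, so the mean curvature of the level set of $K_j^{[p]}$ is nonzero at every $\mathbf{f}$, and \cref{thm:nonzero_curvature_tcc} again closes the proof.

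The main obstacle I anticipate is exactly this $\bdK$ calculation: the Hessian has an $(I-1)D$-dimensional kernel so strict convexity is unavailable, and one needs the observation that the curvature quantity in \cref{eq:curvature} sees only the action of the Hessian along the gradient direction plus its trace. The Kronecker identities above realize this reduction exactly and transfer the strict positivity from $H_p$ to $K_j^{[p]}$ up to the positive factor $\|\bda_j^{[p]}\|^2$.
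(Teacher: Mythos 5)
Your proof is correct and follows essentially the same route as the paper: strict convexity of $H_p$ from \cref{lem:map_MSCT_convex} gives strictly positive mean curvature via \cref{eq:curvature}, and \cref{thm:nonzero_curvature_tcc} then rules out the component-wise local TCC. For $\bdK$ the paper counts positive eigenvalues of the Kronecker-product Hessian (at least two, so the trace strictly exceeds the Rayleigh quotient along the gradient), whereas you perform the equivalent reduction explicitly via the mixed-product identities; your version is slightly more careful in flagging the implicit hypotheses $D\ge 2$ and $\bda_j^{[p]}\ne\bzero$, which the paper's argument also needs but leaves unstated.
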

\begin{proof}
Using \cref{lem:map_MSCT_convex}, we know that if $\boldsymbol{1}\notin Range(B^{\tra})$ and $B$ has full row rank, then $H_p$ is strictly convex and all eigenvalues of its Hessian matrix are positive. Furthermore, since $\bda_j^{[p]} (\bda_j^{[p]})^{\tra}$ has only one positive eigenvalue. Then according to the proof of \cref{thm:kac_GIC_converge}, $\nabla^2 K_j^{[p]}(\mathbf{f})$ would have the same number of positive eigenvalues as $\nabla^2 H_p(\bz_j^{[p]})$. That is, the Hessian at least have two positive eigenvalues. Thus, \cref{eq:curvature} holds. By \cref{thm:nonzero_curvature_tcc}, we obtain that $\bH$ and $\bdK$ do not satisfy the component-wise local TCC.
\end{proof}

\end{appendices}          

\bibliography{MCTreferences}

\end{document}